\newtheorem{theorem}{Theorem}[section]
\newtheorem{proposition}[theorem]{Proposition}
\theoremstyle{definition}
\newtheorem{remark}[theorem]{Remark}
\newtheorem{remarks}[theorem]{Remarks}
\newtheorem{definition}[theorem]{Definition}
\newcommand{\N}{\mathbb{N}} %% Conjunto naturales:     \N
\newcommand{\R}{\mathbb{R}} %% Conjunto reales:        \R
\newcommand{\C}{\mathbb{C}} %% Conjunto complejos:     \C
\newcommand{\D}{\mathbb{D}} %% Disco unidad:           \D
\newcommand{\T}{\mathbb{T}} %% Circ. unidad:           \T
\newcommand{\K}{\mathbb{K}}
\newcommand{\ovl}{\overline}
\newcommand{\ve}{\varepsilon}
\newcommand{\la}{\lambda}
\newcommand{\dis}{\displaystyle}
\newcommand{\id}{\mbox{id}}
\begin{document}

\title[Simultaneous universality]{Simultaneous universality}

\date{}

\author[Bernal]{L.~Bernal-Gonz\'alez}
\address{Departamento de An\'{a}lisis Matem\'{a}tico
\newline\indent Facultad de Matem\'{a}ticas, \newline\indent Universidad de Sevilla
\newline\indent Apdo. 1160, Avenida Reina Mercedes
\newline\indent 41080 Sevilla, Spain.}
\email{lbernal@us.es}

\author[Jung]{A.~Jung}
\address{Fachbereich IV Mathematik
\newline\indent Universit\"at Trier
\newline\indent D-54286 Trier, Germany}
\email{s4anjung@uni-trier.de}

\keywords{hypercyclic operator, composition operator, disjoint universality, simultaneous universality}
\subjclass[2010]{30E10, 47B33, 47A16, 47B38}

\thanks{}

\begin{abstract}
In this paper, the notion of simultaneous universality is introduced, concerning operators having orbits that simultaneously approximate any given vector.
This notion is related to the well known concepts of universality and disjoint
universality. Several criteria are provided, and several applications to specific operators or sequences of operators are performed, mainly in the
setting of sequence spaces or spaces of holomorphic functions.
\end{abstract}

\maketitle

\section{Introduction}

In this paper, we are concerned with the phenomenon of simultaneous appro\-xi\-ma\-tion by the action of several
operators or, more generally, by the action of several sequences of mappings.
When the existence of a dense orbit under an operator is proved,
we are speaking about universality or hypercyclicity, see below.
In many situations, it is possible to show the existence of one vector whose orbits under two or more operators
approximate any given vector. Pushing the question quite further, we wonder under what conditions such
approximation takes place by using a {\it common} subsequence. This, together with its connection with
other kinds of joint universality, will make up the main aim of the present manuscript.

\vskip .15cm

Next, we fix some related notation and terminology to be used in this work.
For a good account of concepts, results and history concerning
hypercyclicity, the reader is referred to the books \cite{bayartmatheron2009,grosseperis2011}.

\vskip .15cm

By $\N , \, \N_0 , \, \R , \, \C , \, \D , \, B(a,r), \,
\ovl{B}(a,r)$ $(a \in \C , \, r > 0)$ we denote, respectively, the
set of positive integers, the set $\N \cup \{0\}$, the real line,
the complex plane, the open unit disk $\{z \in \C : \, |z| < 1\}$,
the open disk with center $a$ and radius $r$, and the
corresponding closed disk. Let $X,Y$ be two Hausdorff topological spaces, and
$T_n:X \to Y$ $(n=1,2, \dots )$ be a sequence of continuous mappings. Recall that
$(T_n)$ is said to be {\it universal} whenever there is some $(T_n)$-orbit which is dense in $Y$, that is,
there exists an element $x_0 \in X$ --called universal for $(T_n)$-- such that
$$\overline{\{T_nx_0: \, n \in \N\}} = Y.$$
Note that $Y$ must be separable. We denote by \,$\mathcal U ((T_n))$ \,the set of universal ele\-ments for $(T_n)$.
When $X = Y$ and $T:X \to X$ is a continuous self-mapping, then $T$ is called {\it universal}
provided that the sequence $(T^n)$ of iterates of $T$ (i.e., $T^1 = T$, $T^2 = T \circ T$, $T^3 = T \circ T^2$, and so on)
is universal, in which case the set $\mathcal U ((T^n))$ of universal elements will be denoted by $\mathcal U (T)$.
A sequence $T_n:X \to Y$ $(n=1,2, \dots )$ of continuous mappings is said to be {\it densely universal} \,if \,$\mathcal U ((T_n))$
is dense in $X$. Birkhoff's transitivity theorem asserts that, if $X$ is a Baire space (in particular, if $X$ is completely metrizable)
and $Y$ is second-countable (in particular, if $X$ is metrizable and separable), then $(T_n)$ is densely universal if and only if
$(T_n)$ is {\it transitive} (that is, given nonempty open sets $U \subset X$, $V \subset Y$, there is $N \in \N$ with $T_N(U) \cap V \ne \varnothing$);
if this is the case, then $\mathcal U ((T_n))$ is residual (in fact, a dense $G_\delta$ subset) in $X$. If $X$ lacks isolated points and $T:X \to X$ is universal, then $\mathcal U (T)$ is dense in $X$ (so residual if $X$ is, in addition, completely metrizable).

\vskip .15cm

In the case in which $X$ and $Y$ are topological vector spaces over $\K$ ($=\R$ or $\C$) and $(T_n) \subset L(X,Y) := \{$linear continuous mappings $X \to Y\}$, the words {\it hypercyclic} and {\it universal} are synonymous, although {\it hypercyclic} is mostly used, as well as the alternative notation
$HC((T_n)) := \mathcal U ((T_n))$ (and $HC(T) := \mathcal U (T)$ for $T \in L(X) := L(X,X) = \{$operators on $X\}$).
In particular, we have if $X$ and $Y$ are F-spaces with $Y$ separable,
then $HC((T_n))$ ($HC(T)$, with $X$ separable, resp.) is residual in $X$ as soon as $(T_n)$ is transitive (as soon as $T$ is hypercyclic, resp.).
Recall that an F-space is a completely metrizable topological vector space.

\vskip .15cm

Assume now that \,$X,\, Y$ \,are topological spaces, with $X$ a Baire space and $Y$ second-countable, and that
\,$S_n:X \to Y$ \,and \,$T_n:X \to Y$ $(n \in \N )$ are densely
universal sequences. Since \,$\mathcal U ((S_n)), \, \mathcal U ((T_n))$ are dense $G_\delta$ subsets of $X$,
we have that $\mathcal U ((S_n)) \cap \mathcal U ((T_n))$
is also dense, so non-empty. Hence there is a common hypercyclic
element $x \in X$. So, for a given point $y \in Y$,
there are sequences $\{n_1 < n_2 < \cdots \}$ and $\{m_1 < m_2 <
\cdots \}$ in $\N$ such that
$$
S_{n_j}x \to y \hbox{ \ and \ } T_{m_j}x \to y \hbox{ \ as \ } j \to \infty .
$$
Then the following question arises naturally:

\vskip .15cm

{\noindent \leftskip 1.2cm \rightskip 1.2cm {\it Under what conditions
on \,$(S_n)$ \,and \,$(T_n)$ \,one can guarantee the exis\-ten\-ce of an element
\,$x \in X$ \,such that, for any given \,$y \in Y$, there
is \,{\rm one} sequence $\{n_1 < n_2 < \cdots \} \subset \N$ such
that
$$S_{n_j}x \longrightarrow \, y \, \longleftarrow T_{n_j}x \hbox{ \ as \ } j \to \infty ?$$} \par}
\vskip -.35cm
Of course, a similar question can be
posed for finitely many sequences and for finitely many single operators on $X$, just by considering
the sequences of their iterates in the latter case. With this in mind, the new concept of
simultaneous universality will be introduced in the next section, and compared to other related notions
existing in the literature, such as
those of disjoint hypercyclicity and the weakly mixing property. Several sufficient conditions for simultaneous universality/hypercyclicity
will be provided in Section 3. Examples of finite families of simultaneous hypercyclic operators will be furnished in sections 4--6,
starting with multiples of an operator and ending up in the frameworks of sequence spaces and of spaces of analytic functions on complex domains.

\section{Simultaneously universal sequences}

\quad Let us define the new concept that is the matter of this
paper. If $p \in \N$ and $Y$ is a nonempty set, then by $\Delta (Y^p)$ we denote the diagonal
of $Y^p = Y \times \cdots \times Y$ ($p$ times), that is, the subset $\Delta (Y^p) = \{(y,y, \dots ,y): \, y \in Y\}$.
If $Y$ is a topological space, then $Y^p$ is assumed to be endowed with the product topology.

\begin{definition}  \label{maindefinition}
{\rm
Let $p \in \N$ and $X,Y$ be Hausdorff topological
spaces. Assume that, for each $j \in \{1, \dots ,p\}$, $T_{j,n}:X
\to Y$ $(n \in \N )$ is a sequence of  continuous
mappings. Consider the sequence
$$[T_{1,n}, \dots ,T_{p,n}]: x \in X \, \longmapsto \, (T_{1,n}x, \dots ,T_{p,n}x) \in Y^p \quad (n \in \N ).$$
Let also \,$T_1, \dots ,T_p : X \to X$ \,be continuous mappings.
\begin{enumerate}
\item [\rm (a)] We say that the sequences $(T_{1,n}), \dots , (T_{p,n})$
are {\it simultaneously universal} \break
(or {\it s-universal}\,) \,whenever
there exists an element $x_0 \in X$ --called {\it s-universal} for $(T_{1,n}), \dots , (T_{p,n})$-- satisfying
$$\ovl{\{[T_{1,n}, \dots ,T_{p,n}]x_0: \, n \in \N\}} \, \supset \, \Delta (Y^p).$$
The set of such s-universal elements will be denoted by $s$-$\mathcal U ((T_{1,n}), \dots , (T_{p,n}))$.
\item [\rm (b)] The sequences $(T_{1,n}), \dots ,(T_{p,n})$ are said to be
{\it densely simultaneously
universal} if the set $s$-$\mathcal U ((T_{1,n}), \dots , (T_{p,n}))$ is
dense in $X$. And they are called {\it hereditarily simultaneously universal}
({\it hereditarily densely simultaneously universal}, resp.) if, for every strictly increasing sequence $(n_k) \subset \N$, the sequences
$(T_{1,n_k}), \dots ,(T_{p,n_k})$ are s-universal (densely s-universal, resp.).
\item [\rm (c)] The mappings \,$T_1, \dots ,T_p$ \,are called {\it s-universal}
({\it densely s-universal}, {\it he\-re\-di\-ta\-ri\-ly s-universal},
{\it hereditarily densely s-universal}, resp.) if the sequences
$(T_1^n), \dots ,(T_p^n)$ are s-universal (densely s-universal, hereditarily s-universal, hereditarily densely s-universal, resp.).
The set $s$-$\mathcal U ((T_1^n), \dots , (T_p^n))$ of corresponding s-universal elements will be denoted by
$s$-$\mathcal U (T_1, \dots , T_p)$.
\end{enumerate}
}
\end{definition}

\begin{remarks}
\noindent 1. If $Y$ is first-countable
(in particular, if $Y$ is metrizable), then the s-simultaneous universality of
$(T_{j,n})_{n\in\mathbb{N}}$ $(1 \le j \le p)$ means the existence of some $x_0 \in X$ enjoying the property that, for every $y \in Y$, there is
a (strictly increasing) sequence $(n_k) \subset \N$ such that \,$T_{j,n_k} x_0 \to y$ \,as \,$k \to \infty$ \,$(j=1, \dots ,p)$.

\vskip 5pt

\noindent 2. In \cite[Kapitel 1]{grosse1987} the notion of relative universality on a closed subset of the arrival space is introduced
under very general assumptions. In the present paper we study a special case of this situation (note that $\Delta (Y^p)$ is closed in $Y^p$ since $Y^p$ is Hausdorff) under more specific hypotheses.

\vskip 5pt

%\noindent 3. Obviously, the s-universality of $(T_{1,n}),
%\dots , (T_{p,n})$ remains unaltered under a change of order
%$(T_{\sigma (1),n}), \dots , (T_{\sigma (p),n})$.
%
%\vskip 5pt

\noindent 3. According to the introduction, if $X,Y$ are to\-po\-lo\-gi\-cal vector spaces and
$T_{j,n},T_j \in L(X,Y)$ $(j = 1, \dots ,p; \, n \in \N )$, then we use the expressions ``s-hypercyclic'', ``densely s-hypercyclic'' and ``hereditarily densely s-hypercyclic'' rather than ``s-universal'',
``densely s-universal'' and ``hereditarily densely s-universal'', respectively. In addition,
we will denote $s$-$HC ((T_{1,n}), \dots , (T_{p,n})) :=$ \break
$s$-$\mathcal U ((T_{1,n}), \dots , (T_{p,n}))$ \,and
\,$s$-$HC (T_1, \dots , T_p) :=$ $s$-$\mathcal U (T_1, \dots , T_p)$ in this case.

\vskip 5pt

\noindent 4. For a single operator $T$, hypercyclicity (hereditary hypercyclicity, resp.) is equivalent to dense hypercyclicity
(hereditary dense hypercyclicity, resp.).

\vskip 5pt

\noindent 5. The property of simultaneous universality of $(T_{1,n}), \dots , (T_{p,n})$ is weaker than the property that the sequence
$([T_{1,n}, \dots ,T_{p,n}])$ is \textit{subspace-universal} for $\Delta (Y^p)$, meaning that the set
$\{[T_{1,n}, \dots ,T_{p,n}]x_0: \, n \in \N\} \cap \Delta (Y^p)$ is dense in $\Delta (Y^p)$ for some $x_0 \in X$
(see e.g.\,\cite{bamernikadetskilicman2016,le2011,madoremartinez2011} for results on subspace-hypercyclicity/universality).
\end{remarks}

Before going on, we want to compare s-universality to other related concepts defined in the
literature. In 2007, B\`es, Peris and the first author (\cite{besperis2007},\cite{bernal2007}) introduced the notion of disjoint (or d-) universality
(sometimes called d-hypercyclicity in the mentioned references). Under the same assumptions and terminology as in
Definition \ref{maindefinition}, the sequences $(T_{1,n}), \dots , (T_{p,n})$ are said to be
\textit{d-universal} whenever the sequence $[T_{1,n} \dots ,T_{p,n}]: X \to Y^p$ $(n\in\mathbb{N})$ is universal, that is, whenever there exists some $x_0\in X$ such that the
joint orbit $\{(T_{1,n}x_0, \dots ,T_{p,n}x_0):n\in\mathbb{N}\}$ is dense in $Y^p$. As a matter of fact, d-universality should not be confused with the universality of the sequence
$$
T_{1,n} \oplus \cdots \oplus T_{p,n}: (x_1, \dots ,x_p) \in X^p
\longmapsto (T_{1,n}x_1, \dots ,T_{p,n}x_p) \in Y^p.
$$
Trivially, disjoint universality of $(T_{1,n}), \dots , (T_{p,n})$ implies universality of the last sequence as well as simultaneous universality of $(T_{1,n}), \dots , (T_{p,n})$. Also, trivially, s-universality implies the universality of each sequence $(T_{j,n})_{n\in\mathbb{N}}$ $(j=1, \dots ,p)$ (in particular, $Y$ must be separable). But no other implications among these properties hold, even considering only $p=2$ and sequences of iterates of single operators. The following examples illustrate this situation:

\begin{enumerate}
\item [1.] Assume that $T$ is a hypercyclic operator on a topological vector space. Then the operators $T,T$ are s-hypercyclic but not d-hypercyclic.
\item [2.] In 1969, S.~Rolewicz \cite{rolewicz1969} proved that if
$c \in \K$ has modulus $>1$ and $B$ is the backward shift $(x_n) \in \ell_2 \mapsto (x_{n+1}) \in \ell_2$, then the
operator $cB$ is hypercyclic. In particular, the operators $T = 2B$ and $S = 4B = 2T$ are hypercyclic, but $T,S$ are clearly not s-hypercyclic.
\item [3.] Since each of the operators $T,S$ of the latter example is mixing (see the definition at the beginning of the next section, regarding
the sequences of iterates; see also \cite[p.~46]{grosseperis2011}), the operator $T \oplus S$ is hypercyclic, but $T,S$ are not s-hypercyclic.
\item [4.] De la Rosa and Read \cite{delarosaread2009} were able to construct a Banach space $X$ and an operator $T \in L(X)$ such that
$T$ is hypercyclic (hence $T,T$ are s-hypercyclic) but $T$ is not weakly mixing on $X$, meaning that $T \oplus T$ is not hypercyclic on $X^2$.
\end{enumerate}

While d-hypercyclic operators must be substantially different, s-hypercyclicity allows more similarity.
For instance, an operator can never be d-hypercyclic with a scalar multiple of itself (see \cite[p.~299]{besperis2007}).
Nevertheless, s-hypercyclicity is possible in concrete situations. This will be analyzed in Section 4.
Sections 5 and 6 are devoted to more specific operators, namely backward shifts and operators on spaces of analytic functions.

\vskip .15cm

We close this section by establishing, under appropriate assumptions, the existence of large vector subspaces
consisting, except for zero, of s-hypercyclic vectors.

\begin{theorem}
\begin{enumerate}
\item[\rm (a)] Let $X$ be a topological vector space and \,$T_j \in L(X)$ $(j=1, \dots ,p)$.
If \,$T_1, \dots ,T_p$ \,are s-hypercyclic and at least one of them commutes with the others, then
$s$-$HC(T_1, \dots ,T_p)$ contains, except for \,$0$, a dense linear subspace of $X$.
\item[\rm (b)] Let $X$ and $Y$ be two topological vector spaces such that \,$Y$ \,is metrizable.
Assume that $(T_{j,n})\subset L(X,Y)$ $(j=1, \dots ,p)$ are hereditarily s-hypercyclic sequences.
Then $s$-$HC((T_{1,n}), \dots ,(T_{p,n}))$ contains, except for \,$0$, an infinite dimensional vector subspace of $X$. %_{n \ge 1}
\item[\rm (c)] Let $X$ and $Y$ be two metrizable separable topological vector spaces.
Assume that $(T_{j,n})\subset L(X,Y)$ $(j=1, \dots ,p)$ are hereditarily densely s-hypercyclic sequences.
Then $s$-$HC((T_{1,n}), \dots ,(T_{p,n}))$ contains, except for \,$0$, a dense linear subspace of $X$.
\end{enumerate}
\end{theorem}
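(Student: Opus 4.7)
For part (a), I would adapt the classical Herrero--Bourdon argument. Fix an s-hypercyclic element $x_{0}$ for $T_{1},\dots,T_{p}$ and assume, without loss of generality, that $T_{1}$ commutes with each of $T_{2},\dots,T_{p}$. The candidate dense subspace is $M:=\{q(T_{1})x_{0}:q\in\K[z]\}$; density follows because the $T_{1}$-orbit $\{T_{1}^{n}x_{0}:n\in\N\}$ is already dense in $X$ (s-hypercyclicity of $x_{0}$ forces each coordinate $T_{j}$ to be hypercyclic, in particular $T_{1}$). The key point is that every nonzero $q(T_{1})x_{0}\in M$ is itself s-hypercyclic. Since $T_{1}$ is hypercyclic, Bourdon's lemma gives that $q(T_{1})$ has dense range for every nonzero polynomial $q$; this also ensures $q(T_1)x_0\ne 0$ when $q\ne 0$. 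Given $z\in X$ and an open neighborhood $V$ of $z$, pick $y$ with $q(T_{1})y\in V$ and an open $U\ni y$ with $q(T_{1})(U)\subset V$; by s-hypercyclicity of $x_{0}$ there is $n\in\N$ with $T_{j}^{n}x_{0}\in U$ simultaneously for every $j$, and commutativity then yields $T_{j}^{n}q(T_{1})x_{0}=q(T_{1})T_{j}^{n}x_{0}\in V$ for all $j$, so the diagonal orbit of $q(T_{1})x_{0}$ approximates $(z,\dots,z)$.

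For part (b), the plan is an inductive construction that exploits hereditary s-hypercyclicity and the sequential description of s-hypercyclicity (valid since $Y$ is metrizable). Set $e_{1}:=x_{0}$ and extract a subsequence $(n^{(1)}_{l})$ of $\N$ with $T_{j,n^{(1)}_{l}}e_{1}\to 0$ for every $j$ simultaneously. By hereditary s-hypercyclicity the restricted family $(T_{j,n^{(1)}_{l}})_{l\in\N}$ is still s-hypercyclic; pick an s-hypercyclic vector $e_{2}$ for it, and extract a further subsequence $(n^{(2)}_{l})\subset(n^{(1)}_{l})$ with $T_{j,n^{(2)}_{l}}e_{2}\to 0$. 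Iterating produces vectors $e_{1},e_{2},\dots$ and nested subsequences $(n^{(k)}_{l})$ with $T_{j,n^{(k)}_{l}}e_{i}\to 0$ for all $i\le k$ and all $j$. Linear independence is automatic: if $e_{k+1}\in\mathrm{span}(e_{1},\dots,e_{k})$, then $T_{j,n^{(k)}_{l}}e_{k+1}\to 0$ along the full subsequence, contradicting the s-hypercyclicity of $e_{k+1}$ for $(T_{j,n^{(k)}_{l}})$. For a nonzero finite combination $\sum_{i=1}^{N}c_{i}e_{i}$ with largest nonzero index $i_{0}$ and any target $y\in Y$, the s-hypercyclicity of $e_{i_{0}}$ for $(T_{j,n^{(i_{0}-1)}_{l}})$ furnishes a further subsequence on which $T_{j,\cdot}e_{i_{0}}\to y/c_{i_{0}}$; on that subsequence the lower-order terms tend to $0$, so $T_{j,\cdot}\sum_{i}c_{i}e_{i}\to y$ for every $j$. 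Hence $\mathrm{span}\{e_{k}\}\setminus\{0\}\subset s$-$HC((T_{1,n}),\dots,(T_{p,n}))$.

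For part (c) I would run exactly the same inductive scheme, adding a density requirement at each step. Fix a dense sequence $(d_{n})$ in the metrizable separable space $X$. Hereditary \emph{dense} s-hypercyclicity guarantees that, at step $k$, the s-hypercyclic vectors for $(T_{j,n^{(k)}_{l}})$ form a dense subset of $X$, so I can choose $e_{k+1}$ to be s-hypercyclic for the restricted sequence \emph{and} within distance $1/(k+1)$ of $d_{k+1}$. The resulting vectors $(e_{k})$ are then dense in $X$, and so is $\mathrm{span}\{e_{k}\}$. The main obstacle common to (b) and (c) is the delicate bookkeeping of nested subsequences while simultaneously forcing linear independence and the correct simultaneous limits; the hereditary hypothesis is precisely what makes this possible, because every refinement still yields an s-hypercyclic (or densely s-hypercyclic) problem, to which a suitable $e_{k+1}$ can be chosen without disrupting the limits already arranged for $e_{1},\dots,e_{k}$.
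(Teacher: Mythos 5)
Your proposal is correct and follows essentially the same route as the paper: part (a) is the Herrero--Bourdon argument with the commuting operator (the paper's $M=\{P(T_i)x_0:\,P\ne 0\}$, dense range plus commutativity), and parts (b)--(c) are exactly the recursive construction of vectors that are s-hypercyclic for nested subsequences along which all previously chosen vectors tend to $0$, a construction the paper only sketches by reference to \cite{bernal1999b} and whose details you supply. The one small repair needed in (a): since $X$ is an arbitrary (not necessarily locally convex) topological vector space, the dense range of $q(T_1)$ for $q\ne 0$ is not the classical Bourdon lemma but Wengenroth's extension of it \cite{wengenroth2003}, which is what the paper invokes.
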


\begin{proof}
(a) By hypothesis, there is \,$i \in \{1, \dots , p\}$ \,such that \,$T_i T_j = T_j T_i$ $(j=1, \dots ,p)$.
Therefore $P(T_i) T_j = T_j P(T_i)$ for all \,$j$ \,and every polynomial \,$P$ \,with coefficients in \,$\K$.
Let \,$\mathcal P$ \,denote the set of such polynomials.
Of course, the operator \,$T_i$ \,is hypercyclic. From a result by Wengenroth \cite{wengenroth2003}, the operator
\,$P(T_i)$ \,has dense range as soon as \,$P \in \mathcal P \setminus \{0\}$. Pick any \,$x_0 \in s$-$HC(T_1, \dots , T_p)$.
Let us define \,$M := \{P(T_i)x_0: \, P \in \mathcal P \setminus \{0\} \}$. Then \,$M$ \,is a linear subspace of \,$X$.
It is dense because \,$M$ \,contains the orbit \,$\{T_i^nx_0:n\in\mathbb{N}\}$, that is dense in $X$ as \,$x_0 \in HC(T_i)$.
It remains to show that \,$M \setminus \{0\} \subset s$-$HC(T_1, \dots , T_p)$.

\vskip .15cm

To this end, fix\,$u \in M \setminus \{0\}$. Then there is \,$P \in \mathcal P \setminus \{0\}$ \,such that
\,$u = P(T_i)x_0$. It must be proved that
$$Z \supset \Delta (X^p),$$
where \,$Z := \ovl{\{(T_1^n u, \dots , T_p^n u):n\in\mathbb{N}\}} = \ovl{\{(P(T_i)T_1^n x_0, \dots , P(T_i) T_p^n x_0):n\in\mathbb{N}\}}$,
where the last equality follows from commutativity. We know that \,$\Delta (X^p) \subset $
$\ovl{\{(T_1^n x_0, \dots , T_p^n x_0):n\in\mathbb{N}\}}$.
Let \,$A := \{(T_1^n x_0, \dots , T_p^n x_0):n\in\mathbb{N}\}$, $\varphi := P(T_i)$ \,and \,$\Phi:X^p \to X^p$ \,be the mapping defined
as \,$\Phi (x_1, \dots , x_p) := (\varphi (x_1), \dots , \varphi (x_p))$. Then, as \,$\varphi$ \,is continuous, we get
$$Z = \ovl{\Phi (A)} \supset \Phi (\ovl{A}) \supset \Phi (\Delta (X^p)) = \{(\varphi (x), \dots ,\varphi (x)) :\, x \in X\},$$
so \,$Z \supset \ovl{\{(\varphi (x), \dots ,\varphi (x)) :\, x \in X\}}$.
Given \,$y \in X$ \,and a neighborhood \,$U$ \,of \,$(y,y, \dots ,y)$, there exists a neighborhood \,$V$ \,of \,$y$
\,such that \,$U \supset V^p$. Since \,$\varphi$ \,has dense range, one can find \,$x \in X$ \,with \,$\varphi (x) \in V$.
Then \,$(\varphi (x), \dots , \varphi (x)) \in U$. In other words, $(y, \dots ,y) \in \ovl{\{(\varphi (x), \dots ,\varphi (x)) :\, x \in X\}}$,
so \,$(y, \dots ,y) \in Z$. Consequently, $Z \supset \Delta (X^p)$, as required.

\vskip .15cm

\noindent (b)--(c). By mimicking the proofs of Theorems 1--2 of \cite{bernal1999b}
(in which the results are given for a single sequence $(T_n)$), we can construct recursively a
sequence \,$(x_N)_{N\in\mathbb{N}} \subset X$ \,and a family $\{(q(N,k))_{k\in\mathbb{N}}:N\in\mathbb{N}_0\}$ of strictly increasing subsequences of \,$\N$
\,satisfying, for all \,$N \in \N$, the following conditions: $x_N \in G_N \cap s\mbox{-}HC((T_{1,q(N-1,k)}, \dots ,(T_{p,q(N-1,k)}))$ \,and \,$T_{j,q(l,k)}x_N \to 0$ as $k \to \infty$ for all $l \ge N$
and all $j \in \{1, \dots ,p\}$, where \,$G_0 := X$ and $G_N := X \setminus {\rm span}\,\{x_1, \dots ,x_{N-1}\}$ $(N\in\mathbb{N})$ if the assumptions
of (b) hold, while \,$\{G_N\}_{N \in \N}$ \,denotes any fixed open basis of \,$X$ if the assumptions of (c) hold.
Then \,$M := {\rm span} \, \{x_N: \, N \in \N\}$ \,is the sought-after vector subspace. The details are left as an exercise.
\end{proof}

\section{s-Universality criteria}

\quad A number of workable sufficient conditions will be useful to detect s-universality. Recall that a sequence of continuous mappings $T_n:X \to Y$ $(n\in\mathbb{N})$ is called {\it mixing} provided that,given nonempty open sets $U \subset X$, $V \subset Y$, there is $N \in \N$ such that $T_n (U) \cap V \ne \varnothing$ for all $n \ge N$. The corresponding notion of simultaneous mixing property arises naturally, as well as the one of simultaneous transitivity. Note that $T_n (U) \cap V \ne \varnothing$ is equivalent to $U \cap T_n^{-1} (V) \ne \varnothing$.

\begin{definition}  \label{definition of s-transitive}
{\rm
Let $p \in \N$ and $X,Y$ be Hausdorff topological
spaces. Assume that, for each $j \in \{1, \dots ,p\}$, $T_{j,n}:X
\to Y$ $(n \in \N )$ is a sequence of  continuous
mappings. Let also \,$T_1, \dots ,T_p : X \to X$ \,be continuous mappings. We say that:
\begin{enumerate}
\item [\rm (a)] The sequences $(T_{1,n}), \dots , (T_{p,n})$
are {\it simultaneously transitive}
(or {\it s-transitive}) \,provided that,
for every pair of nonempty open sets $U \subset X$, $V \subset Y$, there is $N \in \N$ such that
$U \cap \bigcap_{j=1}^p T_{j,N}^{-1} (V) \ne \varnothing$.
\item [\rm (b)] The sequences $(T_{1,n}), \dots , (T_{p,n})$
are {\it simultaneously mixing}
(or {\it s-mixing}) \,provided that,
for every pair of nonempty open sets $U \subset X$, $V \subset Y$, there is $N \in \N$ such that
$U \cap \bigcap_{j=1}^p T_{j,n}^{-1} (V) \ne \varnothing$ for all $n \ge N$.
\item [\rm (c)] The mappings \,$T_1, \dots ,T_p$ \,are {\it simultaneously transitive}
({\it simultaneously mixing}, resp.) whenever the sequences
$(T_1^n), \dots ,(T_p^n)$ are s-transitive (s-mixing, resp.).
\end{enumerate}
}
\end{definition}

\begin{remark}
Corresponding concepts of d-transitivity and d-mixing were introduced in \cite{besperis2007}, where $\bigcap_{j=1}^p T_{j,n}^{-1} (V_j)$
($V_j$ nonempty open subsets of $Y$, $j = 1, \dots ,p$) appears instead of \,$\bigcap_{j=1}^p T_{j,n}^{-1} (V)$. Also, most criteria
given in this section have their counterparts for the related d-properties as provided in \cite{bernal2007} and \cite{besperis2007}.
A thorough study of d-mixing operators is provided in \cite{besmartinperisshkarin2012}.
\end{remark}

Note that, contrary to the one-sequence case, the facts $U \cap \bigcap_{j=1}^p T_{j,N}^{-1} (V) \ne \varnothing$ and
$\bigcap_{j=1}^p T_{j,N} (U) \cap V \ne \varnothing$ are not equivalent.
Observe also that $\bigcap_{j=1}^p T_{j,n}^{-1} (V) = [T_{1,n}, \dots , T_{p,n}]^{-1} (V^p)$. From the definitions, it is easy to check that
the sequences $(T_{1,n}), \dots , (T_{p,n})$ are s-mixing if and only if, for every strictly increasing sequence $(n_k)$ in $\N$,
the sequences $(T_{1,n_k}), \dots , (T_{p,n_k})$ are s-transitive. The following proposition provides what can be called the
Birkhoff s-transitivity theorem.

\begin{proposition} \label{Prop:s-transitive-s-mixing}
Under the same assumptions and terminology as in Definition \ref{definition of s-transitive}, let us suppose, in addition,
that \,$X$ is Baire and \,$Y$ is second-countable. Then we have:
\begin{enumerate}
\item [\rm (i)] The sequences $(T_{1,n}), \dots ,(T_{p,n})$ are s-transitive if and only if they are densely s-universal.
If this is the case, then the set $s$-$\mathcal U ((T_{1,n}), \dots , (T_{p,n}))$ is residual in $X$.
\item [\rm (ii)] The sequences $(T_{1,n}), \dots ,(T_{p,n})$ are s-mixing if and only if, for every strictly increasing sequence $(n_k) \subset \N$,
the sequences $(T_{1,n_k}), \dots ,(T_{p,n_k})$ are densely s-universal.
\end{enumerate}
\end{proposition}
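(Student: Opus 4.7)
The plan is to adapt the classical Birkhoff transitivity argument by writing $s$-$\mathcal U((T_{1,n}),\dots,(T_{p,n}))$ as a countable intersection of open sets indexed by a base of $Y$, and then invoking the Baire property of $X$.

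First I would set $F_n := [T_{1,n},\dots,T_{p,n}]:X\to Y^p$ and observe that $x_0$ is $s$-universal exactly when $F_n x_0$ enters every neighborhood of every point of $\Delta(Y^p)$. The key simplification is that, for any $y\in Y$, the sets $V^p$ with $V$ an open neighborhood of $y$ form a basis of neighborhoods of $(y,\dots,y)$ in $Y^p$; and because $Y$ is second-countable we may fix a countable basis $\{W_k:k\in\N\}$ of nonempty open subsets of $Y$. A straightforward verification then shows
\[
s\text{-}\mathcal U((T_{1,n}),\dots,(T_{p,n})) \;=\; \bigcap_{k\in\N} A_k, \qquad A_k := \bigcup_{n\in\N}\bigcap_{j=1}^{p} T_{j,n}^{-1}(W_k).
\]
Indeed, if the right-hand condition holds and $(y,\dots,y)\in\Delta(Y^p)$ has neighborhood $V^p$, pick any $W_k\subset V$ and the corresponding $n$. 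Conversely, given $W_k$, pick $y\in W_k$ and use that $W_k$ is a neighborhood of $(y,\dots,y)$ on the diagonal.

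Next, by continuity of the $T_{j,n}$, each $A_k$ is open in $X$, so $s$-$\mathcal U$ is automatically $G_\delta$. Under the s-transitivity hypothesis, for any nonempty open $U\subset X$ and any $k\in\N$, the definition applied to $U$ and $V=W_k$ yields $N$ with $U\cap\bigcap_{j=1}^p T_{j,N}^{-1}(W_k)\neq\varnothing$, so $U\cap A_k\neq\varnothing$. Hence every $A_k$ is dense open, and the Baire property of $X$ forces $s$-$\mathcal U$ to be a dense $G_\delta$, in particular residual and nonempty; this gives dense $s$-universality and the residuality claim. The converse implication (dense $s$-universality $\Rightarrow$ $s$-transitivity) is immediate: given nonempty open $U\subset X$ and $V\subset Y$, pick an $s$-universal point $x_0\in U$ and any $y\in V$; then some $n$ satisfies $T_{j,n}x_0\in V$ for all $j$, witnessing $U\cap\bigcap_{j=1}^p T_{j,n}^{-1}(V)\neq\varnothing$. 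This proves (i).

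For (ii), I would rely on the observation made in the paragraph preceding the proposition: $(T_{1,n}),\dots,(T_{p,n})$ are $s$-mixing if and only if, for every strictly increasing $(n_k)\subset\N$, the subsequences $(T_{1,n_k}),\dots,(T_{p,n_k})$ are $s$-transitive. Applying part (i) to each such subsequence (the Baire and second-countability hypotheses being preserved) replaces $s$-transitivity by dense $s$-universality, which is exactly the claimed equivalence.

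I do not anticipate a serious obstacle: the one point requiring care is the reduction to a countable family of test sets $\{W_k\}\subset Y$ rather than $Y^p$, which is justified by the fact that the diagonal inherits a neighborhood basis of the simple product form $W^p$. Everything else is a direct transcription of the standard Birkhoff argument with $V$ simultaneously playing the role that, in the $d$-universal setting, would be played by $p$ independent open sets $V_1,\dots,V_p$.
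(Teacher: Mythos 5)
Your proof is correct and follows essentially the same route as the paper's: both express $s$-$\mathcal U((T_{1,n}),\dots,(T_{p,n}))$ as $\bigcap_{k}\bigcup_{n}\bigcap_{j=1}^{p}T_{j,n}^{-1}(W_k)$ over a countable basis of $Y$, note that each term of the intersection is open, and apply the Baire property, with (ii) deduced from (i) via the subsequence characterization of s-mixing. The only cosmetic difference is in the converse of (i), where you pick an s-universal point inside $U$ directly while the paper argues via density of the sets $\bigcup_{n}\bigcap_{j}T_{j,n}^{-1}(V)$; these are equivalent.
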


\begin{proof}
Part (ii) is an immediate consequence of (i). Let us prove (i). Fix a coun\-ta\-ble open basis $(V_m)$ of $Y$, as well as a point $x_0 \in X$.
Then $x_0 \in s$-$\mathcal U ((T_{1,n}), \dots , (T_{p,n}))$ if and only if, given a nonempty open set $V \subset Y$, there is $n \in \N$ with
$[T_{1,n}, \dots , T_{p,n}] x_0 \in V^p$, that is, $x_0 \in \bigcup_{n \in \N} \bigcap_{j=1}^p T_{j,n}^{-1} (V)$. Since each $V$ contains some $V_m$ and each $V_m$ is a nonempty subset of $Y$, the last property is the same as  $x_0 \in \bigcap_{m \in \N} \bigcup_{n \in \N} \bigcap_{j=1}^p T_{j,n}^{-1} (V_m)$, which shows that
$$
s\hbox{-}\mathcal U ((T_{1,n}), \dots , (T_{p,n})) = \bigcap_{m \in \N} \bigcup_{n \in \N} \bigcap_{j=1}^p T_{j,n}^{-1} (V_m). \eqno (1)
$$
Since the $T_{j,n}$'s are continuous, each set $\bigcap_{j=1}^p T_{j,n}^{-1} (V_m)$ is open. If $(T_{1,n}), \dots ,(T_{p,n})$ are s-transitive
then every set $\bigcup_{n \in \N} \bigcap_{j=1}^p T_{j,n}^{-1} (V_m)$ $(m \in \N )$ is (open and) dense. Hence their (countable) intersection,
which equals $s$-$\mathcal U ((T_{1,n}), \dots , (T_{p,n}))$ by (1), is a dense $G_\delta$ subset (so residual) in $X$ because $X$ is Baire.
Conversely, assume that the set of s-universal elements is dense in $X$ and fix a nonempty open subset $V$ of $Y$. Then there is $m\in \N$ with
$V \supset V_m$. It follows from (1) that $\bigcup_{n \in \N} \bigcap_{j=1}^p T_{j,n}^{-1} (V_m)$ is dense in $X$, so the bigger set
$\bigcup_{n \in \N} \bigcap_{j=1}^p T_{j,n}^{-1} (V)$ is also dense. But this means that, given a nonempty set $U \subset X$, there is
$N \in \N$ such that $U \cap \bigcap_{j=1}^p T_{j,N}^{-1} (V) \ne \varnothing$ or, in other words,
the sequences $(T_{1,n}), \dots ,(T_{p,n})$ are s-transitive.
\end{proof}

In the linear case, we state the following set of sufficient conditions, that are inspired by the results
contained in \cite[Sect.~1c]{grosse1999} and the references cited in it.

\begin{theorem} \label{Theorem:dense-s-hc}
Let $X$ and $Y$ be topological vector spaces such that $X$ is Baire and \,$Y$ is metrizable and separable,
and let \,$(T_{j,n})_{n\in\mathbb{N}}$ $(j=1, \dots ,p )$ \,be sequences in \,$L(X,Y)$. Assume that there are
respective dense subsets \,$X_0$ of \,$X$ and \,$Y_0$ of \,$Y$ satisfying at least one of the following conditions: %_{n \ge 1}
\begin{enumerate}
\item[\rm (A)] For every pair of vectors \,$x \in X_0, \, y \in Y_0$, there exist sequences \,$(n_k) \subset \N$
               \,and \,$(x_k) \subset X$ \,with \,$x_k \to 0$, $T_{j,n_k} x \to 0$ \,and \,$T_{j,n_k} x_k \to y$
               $(j=1, \dots , p)$ \,as \,$k \to \infty$.
\item[\rm (B)] For every \,$x \in X_0$, the sequences \,$(T_{j,n}x)_{n\in\mathbb{N}}$ $(j=1, \dots ,p )$ \,converge in \,$Y$ \,to a common limit and,
               for every $y \in Y_0$, there exist sequences \,$(n_k) \subset \N$
               \,and \,$(x_k) \subset X$ \,with \,$x_k \to 0$ \,and \,$T_{j,n_k} x_k \to y$
               $(j=1, \dots , p)$ \,as \,$k \to \infty$.
\item[\rm (C)] For every \,$x \in X_0$, there exists a sequence \,$(n_k) \subset \N$ \,such that
               the sequences \,$(T_{j,n_k}x)_{k\in\mathbb{N}}$ $(j=1, \dots ,p )$ \,converge in \,$Y$ \,to a common limit and, for every $y \in Y_0$, there exists a sequence \,$(x_n) \subset X$ \,such that \,$x_n \to 0$ \,and \,$T_{j,n} x_n \to y$
               $(j=1, \dots , p)$ \,as \,$n \to \infty$.
\end{enumerate}
Then \,$(T_{j,n})_{n\in\mathbb{N}}$ $(j=1, \dots ,p )$ \,are densely s-hypercyclic. %_{n \ge 1}
\end{theorem}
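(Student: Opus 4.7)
The plan is to reduce dense s-hypercyclicity to s-transitivity via Proposition~\ref{Prop:s-transitive-s-mixing}. Since $Y$ is metrizable and separable it is second-countable, and $X$ is Baire by assumption, so the proposition applies; hence it suffices to show, under each of (A), (B), (C), that for every pair of nonempty open sets $U \subset X$, $V \subset Y$ there is some $N \in \N$ with $U \cap \bigcap_{j=1}^p T_{j,N}^{-1}(V) \ne \varnothing$. In every case the point producing the simultaneous hit will be a small perturbation $x + x_k$ of a distinguished vector $x \in X_0 \cap U$, where the perturbation $x_k$ is chosen so that, along a common index sequence $(n_k)$, the $p$ images $T_{j,n_k}(x+x_k)$ all land near a single target vector lying in $V$.

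Under (A), I would fix $x \in X_0 \cap U$ (using density of $X_0$) and $y \in Y_0 \cap V$ (using density of $Y_0$ and the fact that $V$ is nonempty open). Condition (A) then supplies sequences $(n_k)$ and $(x_k)$ with $x_k \to 0$, $T_{j,n_k}x \to 0$ and $T_{j,n_k}x_k \to y$ simultaneously for $j=1,\dots,p$. By linearity, $T_{j,n_k}(x+x_k) = T_{j,n_k}x + T_{j,n_k}x_k \to y$ and $x+x_k \to x$, so for $k$ large enough $x+x_k \in U$ and $T_{j,n_k}(x+x_k) \in V$ for every $j$, which is precisely the s-transitivity condition with $N = n_k$.

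Under (B), the twist is that $T_{j,n}x$ need not tend to $0$; let $L \in Y$ be the common limit guaranteed by (B). Then $V - L = \{v - L : v \in V\}$ is open and nonempty, so I would pick $y \in Y_0 \cap (V-L)$ and invoke (B) to produce $(n_k)$, $(x_k)$ with $x_k \to 0$ and $T_{j,n_k}x_k \to y$. The combination $T_{j,n_k}(x+x_k) \to L+y \in V$ then closes the argument as in case (A). Under (C) the target $L$ now depends on $x$ and is produced only along a specific subsequence $(n_k)$; the subtlety is that the approximating family $(x_n)$ supplied by the second half of (C) is indexed by all $n \in \N$, so I would evaluate it along $n = n_k$, obtaining $x_{n_k} \to 0$ and $T_{j,n_k}x_{n_k} \to y$, after which $T_{j,n_k}(x+x_{n_k}) \to L+y \in V$ finishes the verification.

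The real work is only bookkeeping: whenever $T_{j,n}x$ does not tend to $0$, the auxiliary target must be picked in $Y_0 \cap (V-L)$ rather than $Y_0 \cap V$, and in case (C) the two sequences furnished by the hypothesis must be synchronised by passing to $n = n_k$. There is no genuine obstacle; the statement is essentially an adaptation to the simultaneous setting of the standard Birkhoff-type transitivity criteria for a single sequence of operators.
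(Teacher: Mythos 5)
Your proof is correct and follows essentially the same route as the paper: reduce to s-transitivity via Proposition~\ref{Prop:s-transitive-s-mixing} and realize the simultaneous hit as a perturbation $x+x_k$ of a point $x\in X_0\cap U$, shifting the target into $Y_0\cap(V-L)$ when the common limit $L$ is nonzero. You also correctly carry out case (C), which the paper leaves as an exercise, by synchronising the two sequences along $n=n_k$.
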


\begin{proof}
According to Proposition \ref{Prop:s-transitive-s-mixing}, we should show that \,$(T_{j,n})_{n\in\mathbb{N}}$ $(j=1, \dots ,p )$
\,are s-transitive. With this aim, fix a pair of nonempty open sets \,$U \subset X$, $V \subset Y$. We should exhibit an \,$N \in \N$
\,such that \,$U \cap \bigcap_{j=1}^p T_{j,N}^{-1} (V) \ne \varnothing$.

\vskip .15cm

Assume first that (A) holds. By density, there are $x \in X_0$ and $y \in Y_0$ such that $x \in U$ and $y \in V$.
Define $A := U - x$ and $B := V - y$. Then $A$ and $B$ are open neighborhoods of \,$0$ \,in $X$ and $Y$ respectively.
Take a $0$-neighborhood $C \subset Y$ satisfying $C + C \subset B$.
Consider the sequences $(n_k)$ and $(x_k)$ provided by (A). Then there is $k \in \N$ such that $x_k \in A$,
$T_{j,n_k} x \in C$ \,and \,$T_{j,n_k} x_k \in y + C$ $(j=1, \dots ,p )$. Let $u := x + x_k$ and $N := n_k$. We get
$u \in x + A = U$ and $T_{j,N} u = T_{j,N} x + T_{j,N} x_k \in C + y + C \subset y + B = V$ $(j=1, \dots ,p )$,
so that \,$u \in U \cap \bigcap_{j=1}^p T_{j,N}^{-1} (V)$.

\vskip .15cm

Suppose now that (B) holds. By density, there is $x \in X_0$ such that $x \in U$. Define $A := U - x$, a neighborhood of $0$.
By hypothesis, there is $z \in Y$ such that $T_{j,n} \to z$ as $n \to \infty$ $(j=1, \dots ,p)$. Since $Y_0$ is dense in $Y$,
there is $y \in Y_0$ with $y \in z + V$. Let $B := V - y + z$, a neighborhood of $0$ in $Y$.
Take a $0$-neighborhood $C \subset Y$ satisfying $C + C \subset B$. We have that $T_{j,n} \in z + C$ $(j=1, \dots ,p)$
for $n \ge n_0$, say. Consider the sequences $(n_k)$ and $(x_k)$ provided by (B) for the vector $y - z$, so that
$x_k \to 0$ and  $T_{j,n_k} x_k \to y-z$ $(j=1, \dots , p)$ \,as \,$k \to \infty$. Choose $k \in \N$ so large that $n_k \ge n_0$,
$x_k \in A$ and $T_{j,n_k} x_k \in y-z + C$ $(j=1, \dots , p)$. Let $u := x + x_k$ and $N := n_k$. Then
$u \in x + A = U$ and, for every $j = 1, \dots ,p$,
$$
T_{j,N} u = T_{j,N} x + T_{j,N} x_k \in z + C + y - z + C = y + C + C \subset y + B = V,
$$
so that \,$u \in U \cap \bigcap_{j=1}^p T_{j,N}^{-1} (V)$, as required. Under assumption (C), the proof is similar and left as an exercise.
\end{proof}

Two of the most popular criteria of hypercyclicity are the so-called blow-up/co\-l\-lap\-se criterion and the hypercyclicity criterion
(see \cite{bayartmatheron2009,grosse2003,grosseperis2011}). Now, we can obtain their respective s-versions.

\begin{proposition}
{\rm [s-Blow-up/Collapse Criterion]} Let $X$ be a Baire metrizable separable topological vector space,
and let \,$(T_{j,n})_{n\in\mathbb{N}}$ $(j=1, \dots ,p )$ \,be sequences in \,$L(X)$. %_{n \ge 1}
Suppose that, for every nonempty open subsets $U,V$ of $X$ and every $0$-neighborhood \,$W \subset X$ there is $N \in \N$ such that
$$W \cap \bigcap_{j=1}^p T^{-1}_{j,N} (V) \ne \varnothing \ne U \cap \bigcap_{j=1}^p T^{-1}_{j,N} (W).$$
Then \,$(T_{j,n})_{n\in\mathbb{N}}$ $(j=1, \dots ,p )$ \,are densely s-hypercyclic.
\end{proposition}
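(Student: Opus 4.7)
The plan is to verify s-transitivity of $(T_{1,n}), \dots, (T_{p,n})$ and then invoke Proposition \ref{Prop:s-transitive-s-mixing}(i), which, because $X$ is Baire, metrizable and separable (hence second-countable), will immediately upgrade s-transitivity to dense s-hypercyclicity together with residuality of the set of s-hypercyclic vectors. So the task reduces to the following: given nonempty open sets $U, V \subset X$, produce a single index $N \in \N$ and a point $u \in U$ whose images $T_{j,N} u$ $(j = 1, \dots, p)$ all lie in $V$. The idea is the classical blow-up/collapse trick, adapted so that one $0$-neighborhood $W$ mediates all $p$ operators at once.

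To set up, I would fix $x_0 \in U$ and $y_0 \in V$, and choose balanced $0$-neighborhoods $W_1, W_2 \subset X$ with $x_0 + W_1 + W_1 \subset U$ and $y_0 + W_2 + W_2 \subset V$. Let $W := W_1 \cap W_2$, still a $0$-neighborhood, and set $U' := x_0 + W_1$ and $V' := y_0 + W_2$; both are nonempty open sets.

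Now I apply the hypothesis to the triple $(U', V', W)$: there exists $N \in \N$ such that simultaneously $W \cap \bigcap_{j=1}^p T_{j,N}^{-1}(V') \ne \varnothing$ and $U' \cap \bigcap_{j=1}^p T_{j,N}^{-1}(W) \ne \varnothing$. Pick witnesses $w \in W$ with $T_{j,N} w \in V'$ for every $j$, and $u' \in U'$ with $T_{j,N} u' \in W$ for every $j$. Setting $u := u' + w$, linearity gives $u \in x_0 + W_1 + W \subset x_0 + W_1 + W_1 \subset U$, and for each $j$, $T_{j,N} u = T_{j,N} u' + T_{j,N} w \in W + V' \subset W_2 + y_0 + W_2 \subset V$. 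Hence $u \in U \cap \bigcap_{j=1}^p T_{j,N}^{-1}(V)$, which is exactly what s-transitivity requires.

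The only feature genuinely distinguishing the s-version from the d-version of \cite{besperis2007} is that the blow-up witness $w$ and the collapse witness $u'$ must be realised at the \emph{same} index $N$ by \emph{all} $p$ operators and using the \emph{same} auxiliary neighborhood $W$; this is precisely what the hypothesis grants, so no genuine obstacle arises. The Baire, metrizability and separability assumptions enter only in the last step through Proposition \ref{Prop:s-transitive-s-mixing}(i); the blow-up/collapse manipulation itself is a purely topological-linear computation.
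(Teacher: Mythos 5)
Your proof is correct and follows essentially the same route as the paper's: both verify s-transitivity by adding a collapse witness $u'\in U'$ (sent into $W$ by all $T_{j,N}$) to a blow-up witness $w\in W$ (sent into a shrunken copy of $V$), and then invoke Proposition \ref{Prop:s-transitive-s-mixing}(i). The only cosmetic difference is that the paper runs the argument along a countable decreasing base of $0$-neighborhoods to produce sequences $x_k\to x$ with $T_{j,n_k}x_k\to y$, whereas you apply the hypothesis once to pre-shrunk neighborhoods satisfying $x_0+W_1+W_1\subset U$ and $y_0+W_2+W_2\subset V$; both are valid.
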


\begin{proof}
Fix a pair of nonempty open sets $U,V \subset X$. Choose vectors $x \in U$, $y \in V$.
It suffices to exhibit sequences
sequences \,$(n_k) \subset \N$ \,and \,$(x_k) \subset X$ \,with \,$x_k \to x$ \,and \,$T_{j,n_k} x_k \to y$
$(j=1, \dots , p)$, because this would entail the existence of some $k \in \N$ such that $x_k \in U$ \,and \,$T_{j,n_k} x_k \in V$
$(j=1, \dots , p)$, so $x_k \in U \cap \bigcap_{j=1}^p T_{j,n_k}^{-1} (V)$. In other words, the sequences $(T_{j,n})_{n\in\mathbb{N}}$ $(j=1, \dots ,p )$
\,would be s-transitive, hence densely s-hypercyclic by Proposition \ref{Prop:s-transitive-s-mixing}.

\vskip .10cm

With this aim, choose a fundamental decreasing sequence $(W_k)$ of $0$-neigh\-bor\-hoods. Then $(U_k) := (x + W_k)$ and
$(V_k) := (y + W_k)$ are fundamental decreasing sequences of $x$-neighborhoods and $y$-neighborhoods, respectively.
By hypothesis, for each $k \in \N$, there are $n_k \in \N$ and points $x_k'$ and $x_k''$ such that
$x_k' \in W_k \cap \bigcap_{j=1}^p T^{-1}_{j,n_k} (V_k)$ and $x_k'' \in U_k \cap \bigcap_{j=1}^p T^{-1}_{j,n_k} (W_k)$.
Let $x_k := x_k' + x_k''$. Then $x_k \to x$ as $k \to \infty$ because $x_k' \in W_k$ (so $x_k' \to 0$) and $x_k'' \in U_k$ (so $x_k'' \to x$). Finally, $T_{j,n_k} x_k = T_{j,n_k} x_k' + T_{j,n_k} x_k'' \to y + 0 = y$ $(j=1, \dots , p)$ because $T_{j,n_k} x_k' \in V_k$ and $T_{j,n_k} x_k'' \in W_k$ for all $k \in \N$.
\end{proof}

%\begin{corollary}
%Let $X$ and $Y$ be topological vector spaces such that $X$ is Baire and \,$Y$ is metrizable and separable,
%and let \,$(T_{j,n})_{n \ge 1}$ $(j=1, \dots ,p )$ \,be sequences in \,$L(X,Y)$. Assume that there are
%dense subsets \,$X_0$ of \,$X$ and \,$Y_0$ of \,$Y$, as well as a sequence \,$(n_k) \subset \N$ \,satisfying the following property:
%for every pair of vectors \,$x \in X_0, \, y \in Y_0$ \,there exists a sequence
%\,$(x_k) \subset X$ \,with
%$$
%x_k \to 0, \,\,\, T_{j,n_k} x \to 0 \hbox{ \ and \ } \,T_{j,n_k} x_k \to y \,\,\, (j=1, \dots , p) \hbox{ \,as } \, k \to \infty .
%$$
%Then \,$(T_{j,n})_{n \ge 1}$ $(j=1, \dots ,p )$ \,are densely s-hypercyclic.
%\end{corollary}
%
%\begin{remark} \label{remark s-hc c}
%The property in the last corollary can be formulated equivalently as: There are mappings $S_k:Y_0 \to X$ $(k \ge 1)$ such that
%$T_{j,n_k} \to 0$ $(j=1, \dots , p)$ pointwise on $X_0$, $S_k \to 0$ \,pointwise on \,$Y_0$ and \,$T_{j,n_k}S_k \to Id$ $(j=1, \dots , p)$ \,pointwise on $Y_0$.
%This form fits better the common formulation of the hypercyclic criterion in the related literature (see \cite{bayartmatheron2009,grosseperis2011}).
%\end{remark}

%The assumptions of the s-hypercyclicity criterion might be somewhat restrictive
%in order to be applied to not too similar operators. This is why we are going to introduce a variant of such criterion.

\vspace{6pt}
Recall that the convex hull \,conv$(A)$ \,of a subset $A$ of a vector space $X$ is the least convex subset of $X$ containing $A$.

\begin{definition} \label{definition of s-hc Delta criterion}
Let $X$ be a Baire metrizable separable locally convex space, $(n_k) \subset \N$ be a
strictly increasing sequence and \,$T_j \in L(X)$ $(j=1, \dots ,p)$.
We say that $T_1, \dots ,T_p$ {\it satisfy the s-hypercyclicity criterion} with respect
to $(n_k)$ if there are subsets $X_0 \subset X, \, W_0 \subset X^p$ such that $X_0$ is dense in $X$ and
    $$
    \ovl{W_0} \supset \Delta (X^p)
    $$
as well as mappings \,$R_k : W_0 \to X$ $(k \in \N )$ \,such that
\begin{enumerate}
\item[\rm (i)] $T_j^{n_k} \to 0$ \,pointwise on $X_0$ as $k \to \infty$ $(j=1, \dots , p)$,
\item[\rm (ii)] $R_k \to 0$ \,pointwise on $W_0$ as $k \to \infty$ and
\item[\rm (iii)] For every $w = (w_1, \dots ,w_p) \in W_0$ and every $j \in \{1, \dots ,p\}$ there is $y_j \in {\rm conv} (\{w_1, \dots ,w_p\})$
such that $T_j^{n_k} R_k w \to y_j$ as $k \to \infty$.
\end{enumerate}
\end{definition}

\begin{theorem} \label{Prop: s-hc Delta-criterion}
{\rm [s-Hypercyclicity Criterion]} Let $X$ be a Baire metrizable se\-pa\-ra\-ble locally convex space and \,$T_j \in L(X)$
$(j=1, \dots ,p)$. If \,$T_1, \dots ,T_p$ sa\-tis\-fy the s-hypercyclicity criterion with respect to some $(n_k) \subset \N$, then \,$(T_1^{n_k}), \dots , (T_p^{n_k})$ are s-mixing. In particular, $T_1, \dots ,T_p$ are densely s-hypercyclic.
\end{theorem}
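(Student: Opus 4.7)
The plan is to verify the s-mixing property of $(T_1^{n_k}), \dots, (T_p^{n_k})$ directly from the definition; the assertion that $T_1, \dots, T_p$ are densely s-hypercyclic then follows from Proposition \ref{Prop:s-transitive-s-mixing}(ii) applied to this subsequence.

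Fix nonempty open sets $U, V \subset X$. The target is an integer $K$ such that, for every $k \ge K$, one has $U \cap \bigcap_{j=1}^p (T_j^{n_k})^{-1}(V) \ne \varnothing$. First I would pick $x \in U \cap X_0$, which is possible by density of $X_0$, together with a point $y \in V$. Exploiting local convexity of $X$, I would choose a \emph{convex} open neighborhood $V''$ of $y$ with $V'' \subset V$. Since $\ovl{W_0} \supset \Delta(X^p)$, the point $(y, \dots, y)$ lies in the closure of $W_0$ in $X^p$, so one can produce $w = (w_1, \dots, w_p) \in W_0$ with every $w_j \in V''$.

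Next I would set $z_k := x + R_k w$. Condition (ii) gives $R_k w \to 0$, so $z_k \to x \in U$. Condition (i) gives $T_j^{n_k} x \to 0$, and condition (iii) produces $y_j \in \mathrm{conv}(\{w_1, \dots, w_p\})$ with $T_j^{n_k} R_k w \to y_j$, for each $j \in \{1, \dots, p\}$. Because $V''$ is convex and contains each $w_j$, the convex hull $\mathrm{conv}(\{w_1, \dots, w_p\})$ sits inside $V''$, hence each $y_j$ lies in the open set $V$. Consequently $T_j^{n_k} z_k = T_j^{n_k} x + T_j^{n_k} R_k w \to y_j \in V$. As only finitely many indices $j$ are involved, there is a single $K \in \N$ such that for all $k \ge K$ one has $z_k \in U$ and $T_j^{n_k} z_k \in V$ for every $j = 1, \dots, p$; in particular $z_k \in U \cap \bigcap_{j=1}^p (T_j^{n_k})^{-1}(V)$, which is the required s-mixing condition.

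The main delicate point, and the place where the local-convexity hypothesis is essential, is the handling of condition (iii): the vectors $y_j$ are only known to lie in the convex hull of $\{w_1, \dots, w_p\}$ and they may differ from each other and from $y$. By selecting the target neighborhood of $y$ to be convex in advance and arranging all coordinates of $w$ inside it, the convex combinations $y_j$ are automatically trapped in $V$, and the finitely many limits can then be handled simultaneously. Once this convex-neighborhood trick is in place, the remainder of the proof is the familiar perturbation scheme $z_k = x + R_k w$ that underlies the classical hypercyclicity criterion.
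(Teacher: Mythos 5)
Your proof is correct and follows essentially the same argument as the paper: the same choice of $x \in U \cap X_0$, the same convex neighborhood $\widetilde{V}$ of a point of $V$ used to trap the limits $y_j \in \mathrm{conv}(\{w_1,\dots,w_p\})$ inside $V$, and the same perturbation $z_k = x + R_k w$. Nothing further to add.
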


\begin{proof}
Let $U,V \subset X$ be nonempty open sets. Then there are $x_0 \in U \cap X_0$ and $y_0 \in V$.
By local convexity, there is a convex open set $\widetilde{V}$ with $y_0 \in \widetilde{V} \subset V$.
As $(y_0, \dots ,y_0) \in \Delta (X^p) \subset \ovl{W_0}$, one can find $w = (w_1, \cdots ,w_p) \in W_0$ such that $w_j \in \widetilde{V}$
for all $j=1, \dots ,p$. Put
$$z_k := x_0 + R_k w \quad (k \in \N ).$$
Then, due to (ii), $z_k \to x_0 + 0 = x_0 \in U$ as $k \to \infty$. Moreover, for every $j \in \{1, \dots ,p\}$ we get thanks to (i) and (iii) that
$$T_j^{n_k} z_k = T_j^{n_k} x_0 + T_j^{n_k} R_k w \longrightarrow 0 + y_j = y_j \hbox{ \ as \ } k \to \infty ,$$
where, for each $j$, $y_j \in {\rm conv} (\{w_1, \dots ,w_p\}) \subset {\rm conv} (\widetilde{V}) = \widetilde{V} \subset V$.
Consequently, there is $k_0 \in \N$ such that, for all $k \ge k_0$, we have $z_k \in U$ and $T_j^{n_k} z_k \in V$ $(j=1, \dots ,p)$ or, in other words,
$z_k \in U \cap \bigcap_{j=1}^p T_j^{-n_k} (V) \ne \varnothing$, as required.
\end{proof}

\begin{remarks} \label{Remarks Delta + disjoint}
1. Examples of spaces $X$ satisfying the assumptions of Theorem \ref{Prop: s-hc Delta-criterion} are the Fr\'echet spaces, that is, the locally convex F-spaces. If local convexity is dropped from the assumptions, then the conclusion still holds if we replace (iii) by the (stronger) condition:

\vskip 3pt

\noindent (iii') {\it $T_j^{n_k} R_k w \to w_j$ as $k \to \infty$ for every $w = (w_1, \dots ,w_p) \in W_0$ and every \hfil\break
\phantom{aaaa} $j \in \{1, \dots ,p\}$.}

\vskip 5pt

\noindent 2. In \cite[Proposition 2.6]{besperis2007} the following {\it d-hypercyclicity criterion} was proved,
where $X$ is a Fr\'echet space, $(n_k) \subset \N$ is a
strictly increasing sequence and \,$T_j \in L(X)$ $(j=1, \dots ,p)$:
Assume that there exist dense subsets $X_0,X_1, \dots ,X_p \subset X$
and mappings $S_{k,j}: X_j \to X$ $(k\in\mathbb{N}; \,1 \le j \le p)$ satisfying
$T_j^{n_k} \to 0$ $(k \to \infty )$ \,pointwise on $X_0$,
$S_{k,j} \to 0$ $(k \to \infty )$ \,pointwise on $X_j$, and $T_j^{n_k}S_{k,l} \to \delta_{j,l} \id_{X_l}$ $(k \to \infty )$ pointwise on $X_l$ $(1 \le j,l \le p)$.
Then \,$(T_1^{n_k}), \dots , (T_p^{n_k})$ are d-mixing (see \cite[Definition 2.1]{besperis2007}). In particular, by \cite[Proposition 2.3]{besperis2007}, $T_1, \dots ,T_p$ are densely d-hypercyclic.

Now, we can obtain a disjoint hypercyclicity criterion under weaker assumptions. Namely, let us assume that
there are dense subsets $X_0 \subset X, \, W_0 \subset X^p$ and mappings $R_k : W_0 \to X$ $(k \in \N )$ satisfying (i)--(ii) of
Definition \ref{definition of s-hc Delta criterion} together with (iii') of the preceding remark (it is easy to check that these assumptions are weaker than those of the d-hypercyclicity criterion in \cite{besperis2007}).
Then $(T_1^{n_k}), \dots , (T_p^{n_k})$ are d-mixing. Indeed,
let $U,V_1, \dots ,V_p \subset X$ be nonempty open sets. By density, there are $x_0 \in U \cap X_0$ and
$w = (w_1, \cdots ,w_p) \in W_0 \cap (V_1 \times \cdots \times V_p)$. Let
$z_k := x_0 + R_k w$ $(k \in \N )$.
Then $z_k \to x_0 + 0 = x_0 \in U$ as $k \to \infty$. Moreover, for every $j \in \{1, \dots ,p\}$ we get
$T_j^{n_k} z_k = T_j^{n_k} x_0 + T_j^{n_k} R_k w \longrightarrow 0 + w_j = w_j$ as $k \to \infty$.
Consequently, there is $k_0 \in \N$ such that, for all $k \ge k_0$, we have $z_k \in U$ and $T_j^{n_k} z_k \in V_j$ $(j=1, \dots ,p)$,
that is,
$z_k \in U \cap \bigcap_{j=1}^p T_j^{-n_k} (V_j) \ne \varnothing$, which is the d-mixing property.

\vskip 5pt

\noindent 3. Several sets of conditions on $T_1, \dots , T_p$ such that these operators satisfy the s-hypercyclicity criterion
with respect to a strictly increasing sequence $(n_k) \subset \N$ are --as it is easy to check-- the following:
\begin{enumerate}
\item[\rm (a)] There are dense subsets $X_0,Y_0\subset X$ and mappings
$S_{k,j}:Y_0 \to X$ $(k\in\mathbb{N}; \,1 \le j \le p)$ such that (i) holds, $\sum_{j=1}^pS_{k,j}\to0$ pointwise on $Y_0$
and $T_j^{n_k} \sum_{l=1}^p S_{k,l} \to \id_{Y_0}$ pointwise on $Y_0$ $(j=1, \dots , p)$.
\item[\rm (b)] There are subsets $X_0,X_1, \dots ,X_p\subset X$ in such a way that $X_0$ is dense in $X$ and $\ovl{X_1 \times \cdots \times X_p} \supset \Delta (X^p)$ as well as mappings $S_{k,j}:X_j \to X$ $(k\in\mathbb{N}; \,1 \le j \le p)$ such that
(i) holds, $\sum_{j=1}^p S_{k,j} x_j \to 0$ for all $(x_1, \dots ,x_p) \in X_1 \times \cdots \times X_p$, and
$T_j^{n_k} (\sum_{l=1}^p S_{k,l} x_l) \to x_j$ for all
$(x_1, \dots ,x_p) \in X_1 \times \cdots \times X_p$ and all $j = 1, \dots ,p$.
\end{enumerate}
In view of (b), we see that if $T_1,\dots,T_p$ satisfy the d-hypercyclicity criterion with respect to $(n_k)$, then they also satisfy the s-hypercyclicity criterion with respect to $(n_k)$.
\end{remarks}

B\`es and Peris \cite{besperis1999} have proved that satisfaction of the hypercyclicity criterion, hereditary
hypercyclicity and transitivity of self-sums are equivalent (see also \cite{bernalgrosse2003}).
Moreover, they es\-ta\-bli\-shed a similar result for d-hypercyclicity \cite[Theorem 2.7]{besperis2007}.
Now, we prove that a corresponding statement also holds for s-hypercyclicity, with the d-hypercyclicity criterion replaced
by the s-hypercyclicity criterion (Theorem \ref{Prop: s-hc Delta-criterion}), so showing that the latter is rather natural.

\begin{proposition}
Let $X$ be a separable Fr\'echet space and $T_j \in L(X)$ \break
$(j=1, \dots ,p)$. Consider the following statements:
\begin{enumerate}
\item[\rm (a)] $T_1, \dots , T_p$ satisfy the s-hypercyclicity criterion.
\item[\rm (b)] $(T_1^{n_k}), \dots ,(T^{n_k}_p)$ are hereditarily densely s-hypercyclic for some $(n_k) \subset \N$.
\item[\rm (c)] $\oplus_{k=1}^m T_1, \dots \oplus_{k=1}^m T_p$ are s-transitive on $X^m$ for all $m \in \N$.
\item[\rm (d)] $T_1 \oplus T_1, \dots T_p \oplus T_p$ are s-transitive on $X^2$.
\end{enumerate}
Then we have:
\begin{enumerate}
\item[\rm (A)] {\rm (a), (b) and (c)} are equivalent.
\item[\rm (B)] If there exists $i \in \{1, \dots ,p\}$ such that $T_i T_j = T_j T_i$ for all $j \in \{1, \dots ,p\}$, then {\rm (a), (b), (c)}
are equivalent to {\rm (d).}
\end{enumerate}
\end{proposition}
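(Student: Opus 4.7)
My approach for part (A) is to prove the cyclic chain (a) $\Rightarrow$ (b) $\Rightarrow$ (c) $\Rightarrow$ (a); for part (B), I observe that (c) $\Rightarrow$ (d) is trivial and deduce (d) $\Rightarrow$ (a) by adapting the commutator trick of \cite{besperis1999}. The implication (a) $\Rightarrow$ (b) is immediate: by Theorem \ref{Prop: s-hc Delta-criterion} the sequences $(T_1^{n_k}),\dots,(T_p^{n_k})$ are s-mixing, which, via the observation after Definition \ref{definition of s-transitive} together with Proposition \ref{Prop:s-transitive-s-mixing}, coincides in the Fr\'echet-separable setting with hereditary dense s-hypercyclicity.

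For (b) $\Rightarrow$ (c), I fix $m$ and basic open sets $U_1 \times \cdots \times U_m$, $V_1 \times \cdots \times V_m$ in $X^m$ and produce the common witness by coordinate-wise extraction. Pick $z_1 \in U_1$ s-universal for $(T_j^{n_k})$ (dense by hereditary density) and, using metrizability of $X$, extract a subsequence $(n_{k^{(1)}_i})$ along which $T_j^{n_{k^{(1)}_i}}z_1 \to y_1 \in V_1$ for every $j$; apply hereditary density to this subsequence to pick $z_2 \in U_2$ s-universal for $(T_j^{n_{k^{(1)}_i}})$ and extract further so that $T_j^{n_{k^{(2)}_i}}z_2 \to y_2 \in V_2$ while the first convergence is preserved. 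After $m$ iterations, any sufficiently large index in the final subsequence supplies the required common $N$.

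The heart of the proof is (c) $\Rightarrow$ (a). Fix a countable dense set $\{e_l\} \subset X$, set $y_l := e_l$, and fix a decreasing fundamental sequence $(V_k)$ of balanced $0$-neighborhoods with $V_{k+1} + V_{k+1} \subset V_k$. I construct by induction on $k$ a strictly increasing sequence $(n_k) \subset \N$, perturbations $x_l^{(k)}$ ($l \le k$) of $x_l^{(k-1)}$ within scale $V_k'$ (with $x_l^{(l-1)} := e_l$), and vectors $r_k^{(l)} \in V_k$ ($l \le k$), such that
\[
T_j^{n_k} x_l^{(k)} \in V_k \qquad \text{and} \qquad T_j^{n_k} r_k^{(l)} \in y_l + V_k
\]
for all $1 \le l \le k$ and $1 \le j \le p$. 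The joint existence of $n_k$ and the $2k$ vectors at stage $k$ is furnished by s-transitivity of $\bigoplus_{i=1}^{2k} T_j$ on $X^{2k}$, guaranteed by (c) with $m = 2k$. Choosing the scales $V_k'$ sufficiently small relative to the finitely many already-fixed operators $T_j^{n_{k'}}$ ($k' < k$) makes the sequences $(x_l^{(k)})_k$ Cauchy, with limits $x_l$ that preserve the convergence $T_j^{n_k} x_l \to 0$. Setting $X_0 := \{x_l : l \in \N\}$ (dense in $X$), $W_0 := \{(y_l,\dots,y_l) : l \in \N\}$ (whose closure contains $\Delta(X^p)$), and $R_k(y_l,\dots,y_l) := r_k^{(l)}$ realises the s-hypercyclicity criterion in the strengthened form (iii$'$) of Remark \ref{Remarks Delta + disjoint}.1, which implies (iii).

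For (d) $\Rightarrow$ (a) in part (B), I would run the same inductive scheme but reduce each stage-$k$ problem to the $m = 2$ case supplied by (d) as follows. Starting from a jointly s-universal pair $(z_1, z_2) \in X^2$ for $T_1 \oplus T_1,\dots,T_p \oplus T_p$, the family $\{(P(T_i) z_1, P(T_i) z_2) : P \in \mathcal P \setminus \{0\}\}$ remains jointly s-universal since each $P(T_i)$ commutes with every $T_j^n$; moreover, by Wengenroth's theorem (cf.\ the proof of Theorem 2.1(a)) each nonzero $P(T_i)$ has dense range, so varying $P$ provides enough material to encode the $k$ simultaneous constraints of stage $k$ from a single pair. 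The main obstacle throughout the scheme is the (c) $\Rightarrow$ (a) induction: because the simultaneous criterion only forces the limit $y_j$ to lie \emph{somewhere} in the convex hull of $\{w_1,\dots,w_p\}$ (rather than being prescribed), the scales $V_k'$ must be scheduled so that a single $n_k$ simultaneously resolves all convergence requirements accumulated from prior stages, without introducing uncontrolled errors from $T_j^{n_k}$ applied to the still-to-be-perturbed vectors.
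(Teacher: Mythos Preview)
Your argument for part (A) is essentially correct and follows the same cyclic chain (a)$\Rightarrow$(b)$\Rightarrow$(c)$\Rightarrow$(a) as the paper. Your (b)$\Rightarrow$(c) via iterated subsequence extraction is a valid alternative to the paper's direct use of the s-mixing threshold. For (c)$\Rightarrow$(a) the constructions diverge: you take $W_0$ to be a countable diagonal set $\{(y_l,\dots,y_l)\}$ and verify the strengthened condition (iii$'$), whereas the paper takes $W_0=\{a_i:i\in\N\}^p$ dense in all of $X^p$ and uses the convex-hull clause (iii) via $R_k(a_{i_1},\dots,a_{i_p}):=\tfrac1p\sum_l w_{i_l,k+1-i_l}$. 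Your route is conceptually lighter; the paper's yields a $W_0$ dense in the full product, in closer parallel to the d-hypercyclicity construction of \cite{besperis2007}. (Two minor cautions in your sketch: the density of $X_0=\{x_l\}$ requires that each point of the countable dense set occur as $e_l$ for arbitrarily large $l$, and well-definedness of $R_k$ needs the $y_l$ to be pairwise distinct.)

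Your treatment of (d)$\Rightarrow$(a), however, has a genuine gap. The claim that ``varying $P$ provides enough material to encode the $k$ simultaneous constraints of stage $k$ from a single pair'' is not substantiated: once you fix polynomials $P_1,\dots,P_k$ with $P_l(T_i)z_1\in U_l$, producing a common $N$ would require a single target $w$ with $P_l(T_i)w$ lying in the prescribed $V_l$ for every $l$ simultaneously---that is, joint density of $w\mapsto(P_1(T_i)w,\dots,P_k(T_i)w)$ in $X^k$---and this does not follow from dense range of each $P_l(T_i)$ individually. The paper does \emph{not} attempt to simulate $X^{2k}$-transitivity from (d). Instead it sets $X_0:=\{T_i^n x_0:n\in\N\}$ (the $T_i$-orbit of a single vector) and $W_0:=X_0^p$. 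Commutativity then collapses condition (i) on all of $X_0$ to the single requirement $T_j^{n_k}x_0\to0$, and condition (iii) to the single requirement $T_j^{n_k}u_k\to x_0$ for a null sequence $(u_k)$; both come from one application of (d) per stage, with $R_k(T_i^{m_1}x_0,\dots,T_i^{m_p}x_0):=\tfrac1p\sum_l T_i^{m_l}u_k$ converging under $T_j^{n_k}$ to $\tfrac1p\sum_l T_i^{m_l}x_0\in{\rm conv}\{T_i^{m_1}x_0,\dots,T_i^{m_p}x_0\}$. This is precisely where the convex-hull clause (iii) earns its keep; your diagonal-$W_0$ scheme with (iii$'$) cannot be fed by a single s-hypercyclic pair in this way.
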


\begin{proof}
In the proof of (A), we follow closely the proof of Theorem 2.7 in \cite{besperis2007}, while the proof of (B) runs similar as the proof of Theorem 2.3, $(3)\Rightarrow(1)$, in \cite{besperis1999}.

\vskip 5pt

\noindent (A)  (a) $\Rightarrow$ (b): $T_1, \dots , T_p$ satisfy the s-hypercyclicity criterion with respect to some $(n_k) \subset \N$, so that they also satisfy it for any subsequence $(m_k)$ of $(n_k)$. By Theorem \ref{Prop: s-hc Delta-criterion}, $(T_1^{m_k}), \dots ,(T^{m_k}_p)$ are s-mixing and therefore densely s-hypercyclic.

\vskip 5pt

\noindent (b) $\Rightarrow$ (c): Let $m \in \N$ be fixed and let $\varnothing \ne U_l,V_l \subset X$ open
$(l=1, \dots ,m)$. It suffices to show that there exists $N \in \N$ such that
$$
U_l \,\cap \,\bigcap_{j=1}^p T_j^{-N} (V_l) \ne \varnothing \hbox{ \,for all \,} l = 1, \dots ,m. \eqno (1)
$$
Since $(T_1^{m_k}), \dots ,(T^{m_k}_p)$ are densely s-hypercyclic for each subsequence $(m_k)$ of $(n_k)$,
the sequences $(T_1^{n_k}), \dots ,(T^{n_k}_p)$ are s-mixing (cf.~Proposition \ref{Prop:s-transitive-s-mixing}(ii)).
Hence, for each $l \in \{1, \dots ,m\}$, there exists \,$k_0(l) \in \N$ \,such that \,$U_l \,\cap \,\bigcap_{j=1}^p T_j^{-n_k} (V_l) \ne \varnothing$
\,for all \,$k \ge k_0(l)$. Then (1) is satisfied simply by choosing $N:= \max \{k_0(1), \dots ,k_0(m)\}$.

\vskip 5pt

\noindent (c) $\Rightarrow$ (a): Due to the assumption, we have:

\vskip 5pt

$(*)$ For every $m \in \N$ and every $2m$-tuple \,$U_1,\dots ,U_m,V_1, \dots ,V_m$ \,of nonempty \break
\phantom{aaaaa} open subsets of $X$ there is $N \in \N$ arbitrarily large such that (1) holds.

\vskip 4pt

Let $(A_n)_{n \in \N}$, $(B_n)_{n \in \N}$ be bases of nonempty sets of the topology of $X$.
For $n \in \N$, we write $W_n := B(0,1/n)$ (open $d$-balls, $d$ being a translation-invariant distance generating the topology of $X$)
and $A_{n,0} := A_n, \, B_{n,0} := B_n$.

\vskip 5pt

Choose a nonempty open set $A_{1,1}$ with ${\rm diam} (A_{1,1}) < 1/2$ and
$\ovl{A_{1,1}} \subset A_1$. Due to $(*)$ (with $m=2$), there is $n_1 \in \N$ such that $B_1 \cap \bigcap_{j=1}^pT_j^{-n_1} (W_1) \ne \varnothing$
and $W_1 \cap \bigcap_{j=1}^pT_j^{-n_1} (A_{1,1}) \ne \varnothing$. Thus, there exist a nonempty open set $B_{1,1}$
with ${\rm diam} (B_{1,1}) < 1/2$, $\ovl{B_{1,1}} \subset B_1$ and $T_j^{n_1} (\ovl{B_{1,1}}) \subset W_1$ for all $j = 1, \dots ,p$, as well as a point
$w_{1,1} \in W_1$ with $T_j^{n_1} w_{1,1} \in A_{1,1}$ for all $j = 1, \dots ,p$. Now, for $i=1,2$, choose $A_{i,3-i}$ open, nonempty, such that
diam$(A_{i,3-i}) < 1/3$, $\ovl{A_{i,3-i}} \subset A_{i,2-i}$ and $\ovl{A_{1,2}} \cap \ovl{A_{2,1}} = \varnothing$.
Due to $(*)$ (with $m=4$), there is $n_2 \in \N$ with $n_2 > n_1$ such that
$B_{i,2-i} \cap \bigcap_{j=1}^pT_j^{-n_2} (W_2) \ne \varnothing$ and
$W_2 \cap \bigcap_{j=1}^pT_j^{-n_2} (A_{i,3-i}) \ne \varnothing$ for $i = 1,2$.
Thus, there exist nonempty open sets $B_{i,3-i}$ with ${\rm diam} (B_{i,3-i}) < 1/3$, $\ovl{B_{i,3-i}} \subset B_{i,2-i}$ and
$T_j^{n_2}(\ovl{B_{i,3-i}}) \subset W_2$ $(i=1,2)$ for all $j = 1, \dots ,p$ as well as points $w_{i,3-i} \in W_2$ $(i=1,2)$ with
$T_j^{n_2} w_{i,3-i} \in A_{i,3-i}$  $(i=1,2)$ for all $j = 1, \dots ,p$.

\vskip 5pt

Continuing this process inductively, by using $(*)$ with $m=2k$ in step $k$, we obtain a strictly increasing sequence $(n_k) \subset \N$,
nonempty open sets $A_{i,k+1-i}, \, B_{i,k+1-i}$ with ${\rm diam} (A_{i,k+1-i}) < {1 \over k+1}, \, {\rm diam} (B_{i,k+1-i})  < {1 \over k+1}$
and points $w_{i,k+1-i} \in W_k$ \break
$(1 \le i \le k; \, k \in \N)$ such that
\begin{enumerate}
\item[(i)] $\ovl{A_{i,k+1-i}} \subset A_{i,k-i}$, $\ovl{B_{i,k+1-i}} \subset B_{i,k-i}$ \,for all \,$1 \le i \le k, \,\, k \in \N$.
\item[(ii)] For each $k \in \N$, the sets  $\ovl{A_{i,k+1-i}}$, $1 \le i \le k$, are pairwise disjoint.
\item[(iii)] $T_j^{n_k} (\ovl{B_{i,k+1-i}}) \subset W_k$ $(k \in \N ; 1 \le i \le k ; \, 1 \le j \le p)$, and
\item[(iv)] $T_j^{n_k} w_{i,k+1-i} \in A_{i,k+1-i}$ $(k \in \N ; \, 1 \le i \le k; \, 1 \le j \le p)$.
\end{enumerate}
For each fixed $i \in \N$, the sequences of closed sets $(\ovl{A_{i,r}})_{r \in \N}$ and $(\ovl{B_{i,r}})_{r \in \N}$ are decreasing (due to (i)) with
${\rm diam} (\ovl{A_{i,r}}), {\rm diam} (\ovl{B_{i,r}}) < {1 \over r+i}$. The completeness of $X$ implies the existence of points $a_i,b_i \in X$ $(i \in \N )$
such that $\bigcap_{r \in \N} \ovl{A_{i,r}} = \{a_i\}$ and $\bigcap_{r \in \N} \ovl{B_{i,r}} = \{b_i\}$.

\vskip 5pt

Put $X_0 := \{b_i : \, i \in \N\} \subset X$ and $W_0 := \{a_i: \, i \in \N\}^p \subset X^p$.
As $a_i \in \ovl{A_{i,1}} \subset A_{i,0} = A_i$ and $b_i \in \ovl{B_{i,1}} \subset B_{i,0} = B_i$ for all $i \in \N$, we obtain that
$X_0$ is dense in $X$ and $W_0$ is dense in $X^p$. Due to (ii), we have that $a_i \ne a_k$ whenever $i \ne k$
(indeed, if $i < k$, say, then $a_i \in \ovl{A_{i,k+1-i}}$ and $a_k \in \ovl{A_{k,1}}$, but $\ovl{A_{i,k+1-i}} \cap \ovl{A_{k,1}} = \varnothing$).
Hence, for each $k \in \N$, the function \,$R_k : W_0 \to X$ \,given by
\begin{equation*}
	R_k (a_{i_1}, \dots ,a_{i_p}) =
	\left\{
	\begin{array}{lll}
		\dis{{1 \over p} \sum_{l=1}^p w_{i_l,k+1-i_l}}& \text{if} \,\, k \ge \dis{\max_{l=1, \dots ,p} i_l} \\
		0 & \text{otherwise}
	\end{array}
	\right.
\end{equation*}
is well defined. Altogether, we have:
\begin{itemize}
\item For all $j = 1, \dots ,p$, all $i \in \N$ and all $k \ge i$ one has, due to (iii), that $T^{n_k}_j b_i \in T^{n_k}_j (\ovl{B_{i,k+1-i}}) \subset W_k = B(0,1/k)$,
so $T_j^{n_k} \to 0$ $(k \to \infty )$ pointwise on $X_0$ for every $j = 1, \dots ,p$.
\item For every $(a_{i_1}, \dots ,a_{i_p}) \in W_0$ and every $k \ge \max_{l=1, \dots ,p} i_l$, one has \break
$R_k (a_{i_1}, \dots ,a_{i_p}) = {1 \over p} \sum_{l=1}^p w_{i_l,k+1-i_l} \to 0$ $(k \to \infty)$, because $w_{i_l,k+1-i_l} \in W_k = B(0,1/k)$.
Therefore $R_k \to 0$ $(k \to \infty )$ pointwise on $W_0$.
\item For all $j = 1, \dots ,p$, all  $(a_{i_1}, \dots ,a_{i_p}) \in W_0$ and all $k \ge \max_{l=1, \dots ,p} i_l$, we get
$T_j^{n_k} R_k (a_{i_1}, \dots ,a_{i_p}) = {1 \over p} \sum_{l=1}^p T^{n_k}_j w_{i_l,k+1-i_l}$. Since
$T^{n_k}_j w_{i_l,k+1-i_l} \in A_{i_l,k+1-i_l}$ and the sequence of sets $A_{i_l,k+1-i_l}$ $(k \in \N )$ collapses to the singleton $\{a_{i_l}\}$ as $k \to \infty$ for each $l$,
we get $T_j^{n_k} R_k (a_{i_1}, \dots ,a_{i_p}) \to {1 \over p} \sum_{l=1}^p a_{i_l} \in {\rm conv} (\{a_{i_1}, \dots ,a_{i_p} \})$ as $k \to \infty$.
\end{itemize}
Thus, $T_1, \dots ,T_p$ satisfy the s-hypercyclicity criterion with respect to $(n_k)$. The proof of (A) is finished.

\vskip 6pt

\noindent (B) Obviously, (c) always implies (d). Assume now that (d) holds and that some $T_i$ commutes with all $T_j$'s.
Our goal is to prove that (a) is satisfied.

\vskip 3pt

Let us fix any vector \,$(x_0,y_0) \in$ $s$-$HC(T_1 \oplus T_1, \dots, T_p \oplus T_p)$. We claim that, for each $m \in \N$, the vector
$(x_0,T_i^m y_0)$ is also s-hypercyclic for $T_1 \oplus T_1, \dots, T_p \oplus T_p$. Indeed, as $T_i$ is hypercyclic, it has dense range,
from which one obtains, inductively, that every set $T^m_i(X)$ is dense in $X$. Put $A := X \times T_i^m (X)$, so that $A$ is dense in $X^2$.
Given $(u,v) \in A$ there is $w \in X$ such that $v = T^m_i w$. By s-hypercyclicity, there exists $(n_k) \subset \N$ such that
$T^{n_k}_j x_0 \to u$ and $T^{n_k}_j y_0 \to w$ $(k \to \infty )$ for all $j =1, \dots ,p$. Hence, for all $j$, $T^{n_k}_j x_0 \to u$ and,
by commutativity together with continuity of $T_i^m$, we get \,$T^{n_k}_j (T_i^m y_0) = T_i^m (T^{n_k}_j y_0) \longrightarrow T_i^m w = v$ $(k \to \infty )$.
Therefore $\Sigma := \ovl{\{[(T_1 \oplus T_1)^n, \dots, (T_p \oplus T_p)^n](x_0,T_i^m y_0): \, n \in \N\}} \supset \Delta (A^p)$. Since $\ovl{\Delta (A^p)}
\supset \Delta ((X^2)^p)$ and \,$\Sigma$ \,is closed, we get \,$\Sigma \supset \Delta ((X^2)^p)$, which proves the claim.

\vskip 5pt

In particular, as $y_0$ is hypercyclic for $T_i$, for each nonempty open set $U \subset X$ there exists some $u \in U$ such that
$(x_0,u)$ is s-hypercyclic for \,$T_1 \oplus T_1, \dots, T_p \oplus T_p$. Thus, fixing a decreasing basis $(U_k)$ of neighborhoods of \,$0$
\,and using induction, we can find for each $k \in \N$ some $u_k \in U_k$ and $n_k \in \N$ with $n_k > n_{k-1}$ (where $n_0 := 0$) such that
\begin{enumerate}
\item[($\alpha $)] $T^{n_k}_j x_0 \in U_k$ for all $j = 1, \dots ,p$ and
\item[($\beta $)] $T^{n_k}_j u_k \in x_0 + U_k$ for all $j = 1, \dots ,p$.
\end{enumerate}
We define \,$X_0 := \{T_i^n x_0: \, n \in \N\}$ \,and \,$W_0 := X_0^p$. Note that $X_0$ is dense in $X$ as $x_0$ is $T_i$-hypercyclic, so
$W_0$ is dense in $X^p$ (hence $\ovl{W_0} \supset \Delta (X^p)$). Now, observe that no orbit of any hypercyclic vector can be finite, that is,
$T_i^m x_0 \ne T^n_i x_0$ if $m \ne n$. Thus, for each \,$k \in \N$, the mapping
$$
R_k : (T_i^{m_1} x_0, \dots ,T_i^{m_p} x_0) \in W_0 \longmapsto {1 \over p} \cdot \sum_{l=1}^p T_i^{m_l} u_k \in X  \eqno (1)
$$
is well defined. We have:
\vspace{2pt}
\begin{enumerate}
\item[(i)] For every $j = 1, \dots ,p$ and every $m \in \N$,  $T_j^{n_k} (T_i^m x_0) = T_i^m (T_j^{n_k} x_0) \to T_i^m 0 = 0$ $(k \to \infty)$,
  where commutativity and continuity of \,$T_i$ \,together with property ($\alpha $) have been used. This shows that $T_j^{n_k} \to 0$ pointwise on $X_0$
   for all $j=1, \dots ,p$.
\item[(ii)] From the continuity of each $T_i^{m_l}$ and the fact that $u_k \in U_k$ (hence $u_k \to 0$), it follows that $T_i^{m_l} u_k \to 0$ $(k \to \infty )$ for every $l=1, \dots ,p$. Then one derives from (1) that $R_k \to 0$ pointwise on $W_0$.
\item[(iii)] For every $j = 1, \dots ,p$ and every $(m_1, \dots ,m_p) \in \N^p$, it follows from (1) that
\begin{equation*}
\begin{split}
T_j^{n_k} R_k (T_i^{m_1} x_0, \dots ,T_i^{m_p} x_0) &= {1 \over p} \cdot \sum_{l=1}^p T_j^{n_k}T_i^{m_l} u_k 
={1 \over p} \cdot \sum_{l=1}^p T_i^{m_l} T_j^{n_k} u_k \\ \longrightarrow\, {1 \over p} \cdot &\sum_{l=1}^p T_i^{m_l} x_0 \,\in\, {\rm conv}(\{T_i^{m_1} x_0, \dots ,T_i^{m_p} x_0\})
\end{split}
\end{equation*}
as $k \to \infty$,
because of $(\beta )$ (which implies $T^{n_k}_j u_k \to x_0$) together with the commutativity and continuity of each $T_i^{m_l}$.
%Now, ${1 \over p} \cdot \sum_{l=1}^p T_i^{m_l}x_0 \in {\rm conv}(\{T_i^{m_1} x_0, \dots ,T_i^{m_p} x_0\})$.
%To summarize, for every $w \in W_0$, $(T_j^{n_k} R_k w)_{k \ge 1}$ converges to some vector in
%\,${\rm conv}(\{T_i^{m_1} x_0, \dots ,T_i^{m_p} x_0\})$.
\end{enumerate}
\vspace{2pt}
This tells us that \,$T_1, \dots , T_p$ \,satisfy the s-hypercyclicity criterion, as required.
\end{proof}

\vspace{4pt}
We raise here the question whether (d) is equivalent to (a)-(b)-(c) without assuming any commutativity.

\section{Scalar multiples of an operator}

\quad We start by studying s-hypercyclicity of scalar multiples of one operator.
We have already pointed out that there is no chance of d-hypercyclicity in this case.

\vskip .15cm

Recall that an operator $T$ on a topological vector space $X$ is called {\it hereditarily hypercyclic} whenever
 $(T^{n_k})$ is universal for every strictly increasing sequence $(n_k) \subset \N$. It is well known --and easy to see-- that, if $X$ is an F-space and
 $T \in L(X)$, then $T$ is hereditarily hypercyclic if and only if $T$ is mixing.

\begin{proposition} \label{Prop: multiples}
Let \,$X$ be a topological vector space, $p \in \N$,
$c_1, \dots ,c_p \in \K$ \,and \,$T, T_1, \dots , T_p \in L(X)$. We have:
\begin{enumerate}
\item[\rm (a)] Assume that \,$X$ is metrizable and locally convex. If \,$T, c_1 T, \dots , c_p T$ are s-hypercyclic then the $c_j$'s are unimodular, that is, $|c_1| = \cdots = |c_p| = 1$.
\item[\rm (b)] Suppose that \,$X$ is metrizable. If \,$T \in L(X)$ is hereditarily hypercyclic and the scalars $c_j$ are unimodular, then \,$T, c_1 T, \dots , c_p T$ are densely s-hypercyclic.
\end{enumerate}
\end{proposition}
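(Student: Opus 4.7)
The plan is to handle the two parts separately: part (a) is a short duality argument using Hahn--Banach, while part (b) reduces to the s-transitivity criterion (Proposition~\ref{Prop:s-transitive-s-mixing}) combined with a standard compactness fact in $\mathbb{T}^p$.

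For part (a), fix any $x_0 \in s$-$HC(T, c_1 T, \dots, c_p T)$. Since $T$ is in particular hypercyclic, $X \ne \{0\}$, so pick some $y \ne 0$ in $X$. Local convexity provides, via Hahn--Banach, a continuous linear functional $\phi$ on $X$ with $\phi(y) = 1$. Because $X$ is metrizable, the s-hypercyclicity of $x_0$ yields a strictly increasing sequence $(n_k) \subset \N$ with
$$
T^{n_k} x_0 \to y \quad \text{and} \quad c_j^{n_k}\,T^{n_k} x_0 \to y \quad (j = 1, \dots, p).
$$
Applying the continuous $\phi$ to both limits and comparing them gives $c_j^{n_k} \to 1$ as $k \to \infty$. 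Since $n_k \to \infty$, this forces $|c_j| = 1$: if $|c_j| > 1$ (resp.~$|c_j| < 1$), then $|c_j^{n_k}|$ would tend to $\infty$ (resp.~to $0$).

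For part (b), by Proposition~\ref{Prop:s-transitive-s-mixing} it is enough to establish the s-transitivity of the sequences of iterates of $T, c_1 T, \dots, c_p T$. Let $U, V \subset X$ be nonempty open sets; choose $y_0 \in V$ and a balanced $0$-neighborhood $W$ with $y_0 + W + W \subset V$. The crux is the following observation: whenever $z \in y_0 + W$ and $c_j^N y_0 - y_0 \in W$, the balancedness of $W$ combined with $|c_j^N| = 1$ gives
$$
c_j^N z = c_j^N y_0 + c_j^N(z - y_0) \in (y_0 + W) + W \subset V.
$$
Thus it suffices to find $N \in \N$ such that $c_j^N y_0 - y_0 \in W$ for every $j = 1, \dots, p$ and $T^N(U) \cap (y_0 + W) \ne \varnothing$; any such $N$, paired with a matching $x \in U$, will satisfy $x \in U \cap T^{-N}(V) \cap \bigcap_{j=1}^p (c_j T)^{-N}(V)$.

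The first condition is the number-theoretic ingredient: because $(c_1, \dots, c_p) \in \mathbb{T}^p$ lies in the compact group $\mathbb{T}^p$, a Bolzano--Weierstrass argument applied to the orbit $\{(c_1^n, \dots, c_p^n) : n \in \N\}$ followed by taking differences of indices shows that $(1, \dots, 1)$ is an accumulation point, so by continuity of scalar multiplication the set of $N$'s meeting the first requirement is infinite. For the second, hereditary hypercyclicity of $T$ means $T$ is mixing, so the set of $N$'s with $T^N(U) \cap (y_0 + W) \ne \varnothing$ is cofinite in $\N$. Their intersection is therefore nonempty, completing the verification. I expect the compact-group accumulation statement to be the only nontrivial input; everything else is routine bookkeeping around balancedness and the triangle inequality in $W$.
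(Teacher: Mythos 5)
Part (a) of your proposal is correct and is essentially the paper's argument. The paper uses a continuous seminorm $q$ with $q(x_0)>0$ and the auxiliary vectors $(c^{n_k}-1)T^{n_k}x_0$, whereas you use a Hahn--Banach functional normalized at a target vector $y$; both exploit local convexity in the same way to force $c_j^{n_k}\to 1$ along a sequence $n_k\to\infty$, whence $|c_j|=1$.

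In part (b) your ingredients (balanced neighborhoods, simultaneous recurrence of $(c_1^N,\dots,c_p^N)$ to $(1,\dots,1)$, mixing of $T$) are the right ones, but the opening reduction is not available under the stated hypotheses. Proposition \ref{Prop:s-transitive-s-mixing} requires $X$ to be a \emph{Baire} space, while Proposition \ref{Prop: multiples}(b) assumes only that $X$ is metrizable. Separability of $X$ does follow from hypercyclicity of $T$ (the orbit is a countable dense set), but Baireness does not: a hypercyclic operator may live on an incomplete, non-Baire metrizable space. On such a space s-transitivity does not yield dense s-universality, so your argument establishes the proposition only for Baire (e.g.\ Fr\'echet) spaces. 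The paper sidesteps category arguments entirely: it fixes one strictly increasing sequence $(n_k)$ with $c_j^{n_k}\to 1$ for all $j$ simultaneously (finite subsets of $\T$ are Dirichlet sets; your Bolzano--Weierstrass argument in $\T^p$ proves the same thing), takes any $x_0\in HC((T^{n_k}))$ --- nonempty by hereditary hypercyclicity --- and verifies directly that $HC((T^{n_k}))\subset s\hbox{-}HC(T,c_1T,\dots,c_pT)$, the left-hand set being dense. A secondary point in the same direction: you assert that hereditary hypercyclicity of $T$ means $T$ is mixing, but the paper records that equivalence only for F-spaces; in the merely metrizable setting it requires the additional (true, but not free) observation that $HC((T^{m_k}))$ is dense whenever it is nonempty, via a dense-range comparison argument. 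To repair your proof while keeping the stated generality, you would in effect have to replace the transitivity reduction by the paper's direct exhibition of a dense set of s-hypercyclic vectors.
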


\begin{proof}
(a) Assume that \,$T, c_1 T, \dots , c_p T$ are s-hypercyclic, and fix $j \in \{1, \dots ,p\}$. Let \,$c := c_j$.
Then \,$T, cT$ are s-hypercyclic, so there is \,$x_0 \in s$-$HC(T,cT)$. Since $X$ is metrizable, we can find a sequence $(n_k) \subset \N$
such that \,$T^{n_k} x_0 \to x_0$ \break
and \,$c^{n_k} T^{n_k} x_0 \to x_0$ \,as $k \to \infty$.
Of course, $x_0 \ne 0$. But $X$ is locally convex, so its topology is defined by
a separating family of seminorms. Therefore there is a continuous seminorm \,$q$ \,on \,$X$ \,such that \,$q(x_0) > 0$.
Consider the sequence of vectors
$$
u_k := (c^{n_k} - 1)T^{n_k}x_0 \quad (k\in\mathbb{N}).
$$
On the one hand, we have $u_k =
c^{n_k}T^{n_k}x_0 - T^{n_k}x_0 \to x_0 - x_0 = 0$, so \,$q(u_k) \to 0$ \,by the continuity of \,$q$. On the other hand, we get
\,$q(u_k) = |c^{n_k} - 1| q(T^{n_k}x_0)$, hence \,$|c^{n_k} - 1| = {q(u_k) \over q(T^{n_k}x_0)} \to {0 \over q(x_0)} = 0$. Therefore
$c^{n_k} \to 1$ as $k \to \infty$, which implies \,$|c| = 1$, that is, $|c_j| = 1$, as required.

\vskip .15cm

\noindent (b) The result is trivial if \,$\K = \R$ (for $c_j = \pm 1$, so $(c_jT)^{2n} = T^{2n}$ for all $n$ and all $j= 1, \dots ,p$).
The complex case \,$\K = \C$ \,is more delicate. Recall that a subset \,$E \subset \T := \{|z|=1\}$ \,is said to be a \textit{Dirichlet set}
provided that there is a strictly increasing sequence $(n_k) \subset \N$ such that \,$\sup_{z \in E} |z^{n_k} - 1| \to 0$ as $k \to \infty$.
It is well-known that every finite subset of \,$\T$ \,is Dirichlet (see \cite[Theorem 8.138(a)]{bukovsky2011}). In particular, there exists
\,$(n_k) \subset \N$ \,strictly increasing such that \,$c_j^{n_k} \to 1$ $(j=1, \dots ,p)$.
According to the hypothesis, we may take \,$x_0 \in HC((T^{n_k}))$.
Given \,$x \in X$, there is a subsequence \,$(m_k) \subset (n_k)$ \,such that \,$T^{m_k}x_0 \to x$ \,and, of course,
\,$c_j^{m_k} \to 1$ $(j=1, \dots ,p)$ \,as \,$k \to \infty$. Therefore, we obtain $(c_jT)^{m_k} x_0 \to x$ for all $j=1,\dots,p$ and hence $HC((T^{n_k})) \subset s$-$HC(T,c_1T, \dots ,c_pT)$. But \,$HC((T^{n_k}))$ is dense, so \,$s$-$HC(T,c_1T, \dots ,c_pT)$ \,also is.
\end{proof}

\begin{remarks}
\noindent 1. In part (b) of the last proposition, hereditary hypercyclicity is needed in order to obtain common subsequences
$(n_k)$ to perform approximations. If this is not claimed, then, by a result due to Le\'on and M\"uller,
any unimodular multiple of a hypercyclic operator on any topological vector
space is always hypercyclic, even with the same set of hypercyclic vectors (see \cite{leonmuller2004}
and \cite[pp.~339--340]{grosseperis2011}).

\vskip 5pt

\noindent 2. It is known that the d-mixing property of $T_1,\dots,T_p$ implies that $c_1T_1,\dots,c_pT_p$ are also d-mixing for all unimodular scalars $c_1,\dots,c_p$ (cf.\,\cite[Remark 24(i)]{besmartinperis2011}). However, the corresponding result in case of s-mixing operators does not hold in general. Indeed, for a mixing operator $T$, the pair $T,T$ is clearly s-mixing, but $T,-T$ are not s-mixing any more. To see this, assume that $T,-T$ are s-mixing. Then Proposition \ref{Prop:s-transitive-s-mixing}(ii) would imply that $(T^{n_k}),((-T)^{n_k})$ are densely s-universal for each strictly increasing sequence $(n_k)$ in $\mathbb{N}$ -- but s-universality of the sequences $(T^{2k+1})$ and $((-T)^{2k+1})=(-T^{2k+1})$ is clearly not possible. In connection with this, it is stated in \cite[Remark 24(ii)]{besmartinperis2011} and actually proved in \cite[Proposition 4.9]{shkarin2008} that in case of unimodular scalars $c_1,\dots,c_p$ every d-hypercyclic vector $x_0$ for $T_1, \dots , T_p$ is also d-hypercyclic for $c_1T_1, \dots , c_pT_p$.
The proof uses crucially the fact that such a vector $x_0$ satisfies $(x_0, \dots ,x_0) \in HC(T_1 \oplus \cdots \oplus T_p)$. Thus, it cannot be adapted for s-hypercyclicity. Hence, we pose the question: Does the equality \,$s$-$HC(T_1, \dots ,T_p) =$ $s$-$HC(c_1T_1, \dots ,c_pT_p)$ \,hold?

\vskip 5pt

\noindent 3. Concerning again part (b) and regarding its proof, we may obtain a much stronger result in the case \,$\K = \C$ \,and \,$X$ \,a Banach space.
Recall that a nonempty subset \,$E \subset \C$ \,is said to be \textit{perfect} if it is closed and each point of \,$E$ \,is an accumulation point of \,$E$. In particular, every perfect set is uncountable. It is well known (see \cite[Theorem 8.138(b)]{bukovsky2011}) that there are perfect Dirichlet subsets of \,$\T$. We have that if \,$E \subset \T$
\,is a perfect Dirichlet set and \,$T \in L(X)$ \,is mixing, then the uncountable family of rotations \,$\{cT: \, c \in E \cup \{1\}\}$
is {\it densely uniformly s-hypercyclic}, in the sense that there is a dense set of vectors \,$x_0 \in X$ \,satisfying the following:
for every \,$y \in X$ \,there is \,$(n_k) \subset \N$ \,such that \,$\lim_{k \to \infty} \sup_{c \in E \cup \{1\}} \|(cT)^{n_k}x_0 - y\| = 0$.
Indeed, we can take a sequence \,$(m_k) \subset \N$ \,such that \,$\sup_{c \in E \cup \{1\}} |c^{m_k} - 1| = \sup_{c \in E} |c^{m_k} - 1| \to 0$
as \,$k \to \infty$. As \,$T$ \,is mixing, the set \,$HC((T^{m_k}))$ \,is dense. If \,$x_0 \in HC((T^{m_k}))$, then there is a subsequence \,$(n_k) \subset (m_k)$
\,with \,$T^{m_k}x_0 \to y$. The conclusion follows from the inequality \,$\|(cT)^{n_k}x_0 - y\| \le \|c^{n_k}(T^{n_k}x_0- y)\| + \|(c^{n_k} - 1)y\|$.

\vskip 5pt

\noindent 4. Proposition \ref{Prop: multiples} furnishes examples of pairs of operators --on spaces of sequences or of holomorphic functions (see sections 5--6)-- that are s-hypercyclic but not d-hypercyclic: the multiples $2B,-2B$ of the backward shift $B$ on $\ell_q$ $(1 \le q < \infty )$ or $c_0$; $D,-D$ on $H(\C )$
($Df := f'$); $C_\varphi, -C_\varphi$ on $H(G)$, where $C_\varphi f :=f\circ\varphi$, $G \subset \C$ is a simply connected domain and $\varphi$ is a run-away automorphism of \,$G$.
\end{remarks}

\section{Backward shifts and s-hypercyclicity}

\quad In this section, we consider the sequence spaces $c_0$ and $\ell_q$ $(1 \le q < \infty )$
over $\K = \R$ or $\C$.
If $a = (a_n)_{n\in\mathbb{N}}$ is a bounded sequence in $\K \setminus \{0\}$, then $B_a$ will denote the weighted backward shift
$$B_a : (x_0,x_1,x_2, \dots ) \in X \mapsto (a_1x_1,a_2x_2, \dots ) \in X$$
on $X = c_0$ or $\ell_q$. The unweighted backward shift $B$ is $B = B_a$, where $a = (1,1,1, \dots )$.
Salas characterized the hypercyclicity of $B_a$ in terms of the weight sequence $a$.
B\`es and Peris \cite[Theorem 4.1]{besperis2007} did the same for the d-hypercyclicity of different powers of $B_a$.
This characterization happens to hold also for s-hypercyclicity.

\begin{proposition} \label{Prop: s-h powers of wbs}
Let \,$X = c_0$ \,or \,$\ell_q\,\,(1 \le q < \infty)$, $p\ge2$ and let \,$r_1, \dots , r_p \in \N$ \,with \,$r_1 < r_2 < \cdots < r_p$ \,be given.
For each \,$l \in \{1, \dots ,p\}$, let \,$a_l = (a_{l,n})_{n\in\mathbb{N}}$ be a weight sequence. Then the following are equivalent:
\begin{enumerate}
\item[\rm (i)] $B_{a_1}^{r_1}, \dots , B_{a_p}^{r_p}$ are d-hypercyclic.
\item[\rm (ii)] $B_{a_1}^{r_1}, \dots , B_{a_p}^{r_p}$ are s-hypercyclic.
\item[\rm (iii)] For every \,$M > 0$ \,and every \,$k \in \N$ \,there is \,$m \in \N$ \,satisfying, for each \,$j \in \{0,1, \dots ,k\}$, that
$|a_{l,j+1} \cdots a_{l,j+r_lm}| > M$ $(1 \le l \le p)$ and ${|a_{l,j+1} \,\cdots \,a_{l,j+r_lm}| \over |a_{s,j+(r_l-r_s)m+1} \,\cdots \,a_{s,j+r_lm}|} > M$
$(1 \le s < l \le p)$.
\item[\rm (iv)] $B_{a_1}^{r_1}, \dots , B_{a_p}^{r_p}$ satisfy the d-hypercyclicity criterion.
\item[\rm (v)] $B_{a_1}^{r_1}, \dots , B_{a_p}^{r_p}$ satisfy the s-hypercyclicity criterion.
\end{enumerate}
\end{proposition}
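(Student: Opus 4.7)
The plan is to establish the five-way equivalence by combining the d-hypercyclicity characterization of B\`es and Peris \cite[Theorem 4.1]{besperis2007}, which (after the straightforward extension from one weighted shift to several with possibly distinct weight sequences, since the argument proceeds coordinate-wise) yields (i) $\Leftrightarrow$ (iii) $\Leftrightarrow$ (iv), with three short implications that tie (ii) and (v) to the rest. First, (i) $\Rightarrow$ (ii) is immediate, since d-hypercyclicity trivially implies s-hypercyclicity. Next, (iv) $\Rightarrow$ (v) follows from part (b) of Remarks \ref{Remarks Delta + disjoint}(3), which was designed precisely for this purpose. Finally, (v) $\Rightarrow$ (ii) is contained in Theorem \ref{Prop: s-hc Delta-criterion}. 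The only nontrivial step needed to close the circle is (ii) $\Rightarrow$ (iii).

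For (ii) $\Rightarrow$ (iii), I would fix $M>0$ and $k\in\mathbb{N}$, pick $\varepsilon:=1$ and a large constant $c>M\,\|x_0\|+M+2$, and use the test vector $y:=c\sum_{j=0}^{k}e_{j}\in X$ (with $e_j$ the canonical basis vectors). Choose $x_0\in s$-$HC(B_{a_1}^{r_1},\dots,B_{a_p}^{r_p})$. By s-hypercyclicity, arbitrarily large $m\in\mathbb{N}$ exist with $\|B_{a_l}^{r_l m}x_0-y\|<\varepsilon$ simultaneously for all $l=1,\dots,p$; pick such an $m$ which is additionally so large that $(r_l-r_s)m>k$ for every $1\le s<l\le p$. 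Since the $j$-th coordinate of any vector in $c_0$ or $\ell_q$ is bounded by its norm, at each $j\in\{0,1,\dots,k\}$ we obtain
\[
|a_{l,j+1}\cdots a_{l,j+r_l m}|\cdot |x_{0,j+r_l m}|>c-\varepsilon,
\]
while at the coordinate $j+(r_l-r_s)m$ (which lies above $k$, hence outside the support of $y$) we get
\[
|a_{s,j+(r_l-r_s)m+1}\cdots a_{s,j+r_l m}|\cdot|x_{0,j+r_l m}|<\varepsilon,
\]
noting that $(r_l-r_s)m+r_sm=r_lm$ makes both estimates involve the same coordinate $x_{0,j+r_l m}$. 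Combining the first estimate with $|x_{0,n}|\le\|x_0\|$ yields $|a_{l,j+1}\cdots a_{l,j+r_l m}|>(c-\varepsilon)/\|x_0\|>M$; and because the first inequality forces $x_{0,j+r_l m}\ne 0$, dividing the two displayed inequalities cancels the common factor $|x_{0,j+r_l m}|$ and gives the ratio bound $(c-\varepsilon)/\varepsilon>M$. This is exactly (iii).

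The main conceptual obstacle I expect is the realization that, even though in s-hypercyclicity all shifts aim for the \emph{same} target $y$ (and one has a priori less information than in d-hypercyclicity), a single well-chosen target with finite support \emph{already} separates the relevant ratios: the alignment $(r_l-r_s)m+r_sm=r_lm$ makes the numerator and denominator of the ratio condition read the same entry $x_{0,j+r_l m}$ of $x_0$, so the unknown coordinate is eliminated by division. The remaining minor checks are the straightforward verification that the stated version of \cite[Theorem 4.1]{besperis2007} carries over from the single-weight case to several weight sequences, and that the B\`es--Peris d-hypercyclicity criterion is indeed subsumed by the s-hypercyclicity criterion as asserted in Remarks \ref{Remarks Delta + disjoint}(3)(b).
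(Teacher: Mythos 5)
Your proposal is correct and follows the same route as the paper: (i)\,$\Leftrightarrow$\,(iii)\,$\Leftrightarrow$\,(iv) cited from B\`es--Peris, the trivial chain (i)\,$\Rightarrow$\,(ii) and (iv)\,$\Rightarrow$\,(v)\,$\Rightarrow$\,(ii) via Remark 3.8.3(b) and the s-hypercyclicity criterion, and (ii)\,$\Rightarrow$\,(iii) by observing that the B\`es--Peris argument for ``(a)\,$\Rightarrow$\,(b)'' only uses simultaneous approximation of a single finitely supported test vector. The paper merely cites that last step, whereas you write it out explicitly (choosing the target $c\sum_{j=0}^k e_j$ and exploiting the alignment $(r_l-r_s)m+r_sm=r_lm$ to cancel the unknown coordinate of $x_0$), and your computation checks out, modulo the cosmetic point that $c$ should be chosen after $x_0$ since it depends on $\|x_0\|$.
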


\begin{proof}
The equivalence of (i), (iii) and (iv) is proved in \cite[Theorem 4.1]{besperis2007}.
That (i) implies (ii) is trivial. Moreover, (ii) $\Rightarrow$ (iii) is proved in fact in the proof of ``(a) $\Rightarrow$ (b)''
of the same reference, since only the simultaneous approximation of one vector (namely $e_0 + \cdots + e_q$) is used. Finally, we clearly have (iv) $\Rightarrow$(v) $\Rightarrow$ (ii).
\end{proof}

\begin{remarks}
1. An analogous result about equivalence of d- and s-hypercyclicity also works for powers of weighted
bilateral shifts (see Theorem 4.7 of \cite{besperis2007} and its proof).

\vskip 5pt

\noindent 2. Corollary 4.4 in \cite{besperis2007} also works with just s-universality, as it is a consequence of Theorem 4.1 there.
In particular, we have that $B_a, B_a^2, \dots , B_a^p$ are s-hypercyclic on $X$ if and only if
$B_a \oplus B_a^2 \oplus \cdots \oplus B_a^p$ is hypercyclic on $X^p$. B\`es, Martin and Peris \cite[p.~855]{besmartinperis2011}
constructed an operator $T := B_a$ on $\ell_2$ such that $T$ is hypercyclic but $T \oplus T^2$ is not hypercyclic on
$\ell_2 \oplus \ell_2$, so that $T,T^2$ is  not d-hypercyclic on $\ell_2$. Then we obtain that $T,T^2$ are even {\it not s-hypercyclic.}
According to \cite[Theorem 4.8]{grosseperis2011}, the mentioned $T = B_a$ is not mixing. In \cite[Sect.~3]{besmartinperisshkarin2012},
a mixing operator $T \in L(\ell_2)$ for which $T,T^2$ are not d-mixing is exhibited. But the existence of a mixing $T$ on a separable Banach space such
that $T,T^2$ are not d-hypercyclic is unknown so far \cite[Question 3.7]{besmartinperisshkarin2012}.
\end{remarks}

A more delicate question arises when $r_1 \le r_2 \le \cdots \le r_p$. In \cite[Corollary 4.2]{besperis2007}, the following is proved
for weighted powers of the unweighted backward shift: if \mbox{$p \ge 2$} and $r_l \in \N$, $\la_l \in \K$ $(1 \le l \le p)$ with $r_1 \le r_2 \le \cdots \le r_p$, then $\la_1 B^{r_1}, \dots ,\la_p B^{r_p}$ are d-hypercyclic if and only if \,$r_1 < r_2 < \cdots < r_p$ \,and \,$1 < |\la_1| < |\la_2| < \cdots  < |\la_p|$. The following result shows that s-hypercyclicity is possible under slightly weaker assumptions.

\begin{proposition} \label{Prop: s-hc unweighted b.s.}
Let $p \ge 2$, and let $r_l \in \N$, $\la_l \in \K$ $(1 \le l \le p)$ with $r_1 \le r_2 \le \cdots \le r_p$.
Let $A$ denote the set $A := \{j \in \{1,\dots ,p-1\}: \, r_j = r_{j+1}\}$ and consider the conditions
\begin{enumerate}
\item[\rm (i)] $1 < |\la_j|$ for all $j \in \{1, \dots , p\}$,
\item[\rm (ii)] $|\la_j| < |\la_{j+1}|$ for all $j \in \{1, \dots , p-1\} \setminus A$,
\item[\rm (iii)] $|\la_j| = |\la_{j+1}|$ for all $j \in A$.
\end{enumerate}
Then \,$\la_1 B^{r_1}, \dots ,\la_p B^{r_p}$ are s-hypercyclic on $X = c_0$ or $\ell_q\,\,(1 \le q < \infty)$ if and only if \textnormal{(i)},\textnormal{(ii)} and \textnormal{(iii)} hold.
\end{proposition}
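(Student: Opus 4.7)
The plan is to treat necessity and sufficiency separately.

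\textbf{Necessity.} Condition (i) is immediate because s-hypercyclicity of the tuple entails hypercyclicity of each single operator $\lambda_j B^{r_j}$ on $X$, which by the well-known Rolewicz-type criterion forces $|\lambda_j|>1$. For (iii), when $r_j=r_{j+1}$ the sub-pair $(\lambda_j B^{r_j},\lambda_{j+1}B^{r_{j+1}})$ is of the form $(T,cT)$ with $T=\lambda_j B^{r_j}$ and $c=\lambda_{j+1}/\lambda_j$; the common subsequence witnessing s-hypercyclicity of the full tuple also witnesses it for the sub-pair, so Proposition~\ref{Prop: multiples}(a) forces $|c|=1$. For (ii) in the case $r_j<r_{j+1}$, I would pick $x_0$ and $(n_k)$ that simultaneously drive $T_j^{n_k}x_0$ and $T_{j+1}^{n_k}x_0$ to $e_0$. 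Writing $B^{r_jn_k}x_0=\lambda_j^{-n_k}(e_0+\varepsilon_k)$ with $\varepsilon_k\to 0$ and using $B^m e_0=0$ for $m\ge 1$, the identity $B^{r_{j+1}n_k}=B^{(r_{j+1}-r_j)n_k}\circ B^{r_jn_k}$ gives $\|B^{r_{j+1}n_k}x_0\|\le |\lambda_j|^{-n_k}\|\varepsilon_k\|$. Comparing with the lower bound $\|B^{r_{j+1}n_k}x_0\|\ge\tfrac12|\lambda_{j+1}|^{-n_k}$ (valid eventually, from $\lambda_{j+1}^{n_k}B^{r_{j+1}n_k}x_0\to e_0$) yields $(|\lambda_j|/|\lambda_{j+1}|)^{n_k}\to 0$, hence $|\lambda_j|<|\lambda_{j+1}|$.

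\textbf{Sufficiency.} Assuming (i)--(iii), I verify the s-hypercyclicity criterion of Theorem~\ref{Prop: s-hc Delta-criterion}. Partition $\{1,\dots,p\}$ into blocks $I_1\sqcup\cdots\sqcup I_m$ according to the distinct values $s_1<\cdots<s_m$ of $(r_j)$; by (iii), $|\lambda_l|$ is constant equal to some $\rho_i$ on each $I_i$, and by (ii), $1<\rho_1<\cdots<\rho_m$. The crucial preparatory step is to select a strictly increasing $(n_k)\subset\N$ such that $(\lambda_j/\lambda_l)^{n_k}\to 1$ simultaneously for every pair $(j,l)$ satisfying $r_j=r_l$; since these finitely many ratios lie on $\T$ (by (iii)), this is possible either by taking $n_k$ even (if $\K=\R$) or by invoking the Dirichlet property of finite subsets of $\T$ (if $\K=\C$), exactly as in the proof of Proposition~\ref{Prop: multiples}(b). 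Then set $X_0=c_{00}$, $W_0=\Delta(c_{00}^p)$, define the scaled right shifts $S_{k,l}:X\to X$ by $S_{k,l}e_i=\lambda_l^{-n_k}e_{i+r_l n_k}$, and put
\[R_k(w,\dots,w)=\sum_{l=1}^p\frac{1}{|I_{i(l)}|}\,S_{k,l}w\qquad(w\in c_{00}),\]
where $i(l)$ denotes the block containing $l$.

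Properties (i) and (ii) of the criterion are routine: $T_j^n$ annihilates any fixed vector of $c_{00}$ once $n$ is large, and $\|S_{k,l}w\|=|\lambda_l|^{-n_k}\|w\|\to 0$ since $|\lambda_l|>1$. The main computation is (iii), organized by whether $r_l<r_j$, $r_l=r_j$, or $r_l>r_j$. If $r_l<r_j$, the backward shift dominates and $T_j^{n_k}S_{k,l}w=0$ for $n_k$ large (as $w\in c_{00}$). If $r_l=r_j$ (so $l\in I_{i(j)}$), the shifts cancel and the term equals $|I_{i(j)}|^{-1}(\lambda_j/\lambda_l)^{n_k}w$; summing over $l\in I_{i(j)}$ and invoking the Dirichlet choice of $(n_k)$, these $|I_{i(j)}|$ terms converge in the limit to exactly $w$. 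If $r_l>r_j$, the result is a right-shift of $w$ of norm $(\rho_j/\rho_l)^{n_k}\|w\|\to 0$ by (ii). Thus $T_j^{n_k}R_k(w,\dots,w)\to w\in\mathrm{conv}(\{w\})=\{w\}$, verifying (iii). The main obstacle is precisely the balancing of the weights $1/|I_{i(l)}|$: they are chosen so that for each $j$ the $|I_{i(j)}|$ surviving contributions sum to exactly one copy of $w$; once this design choice is spotted, the three-regime bookkeeping is routine.
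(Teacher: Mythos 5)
Your proof is correct and follows essentially the same route as the paper: sufficiency via the s-hypercyclicity criterion with $X_0=c_{00}$, $W_0=\Delta(c_{00}^p)$, a Dirichlet sequence along which the powers of the unimodular ratios $\la_j/\la_l$ (for $r_j=r_l$) tend to $1$, and a right inverse $R_k$ built from scaled forward shifts (you average $p$ copies with weights $1/|I_{i(l)}|$, while the paper places a single copy per distinct power $r_{t_l}$ using one representative weight per block --- both choices yield the same three-regime limit computation). For necessity you use the same three ingredients --- norm $>1$ for (i), Proposition \ref{Prop: multiples}(a) applied to the sub-pair $(T,cT)$ for (iii), and a growth comparison along a common approximating subsequence for (ii) --- merely writing out explicitly the estimate that the paper imports from the proof of Corollary 4.2 in B\`es--Peris.
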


\begin{proof}
First, suppose that conditions (i),(ii) and (iii) hold. We write $\{1,\dots,p\}\backslash A = \{t_1, \dots , t_d\}$, with $d \in \N$ and $t_1 < \cdots < t_d$.
As the set $\{\la_i/\la_j: \, i,j \in \{1, \dots ,p\}$ with $|\la_i| = |\la_j|\} \subset \T$ is finite, it is a Dirichlet set.
Hence there exists a strictly increasing sequence $(n_k) \subset \N$ such that
$$
\left( {\la_i \over \la_j} \right)^{n_k}  \to 1 \,\, (k \to \infty ) \hbox{ \ for all } i,j \in \{1, \dots ,p\} \hbox{\, with \,} |\la_i| = |\la_j|. \eqno (1)
$$
Consider the set $X_0$ of finite sequences, that is, $X_0 := c_{00} = \{x = (x_n) \in X:$ exists $n_0 = n_0(x) \in \N$ such that $x_n = 0$ for all $n\ge n_0\}$.
Then $X_0$ is dense in $X$. If we set $W_0 := \Delta (X_0^p) \subset X^p$, then $\ovl{W_0} = \ovl{\Delta (X_0^p)} \supset \Delta (X^p)$ because $X_0$ is dense in $X$. Now, we set $T_j := \la_j B^{r_j}$ $(j=1, \dots ,p)$. Define, for each $k \in \N$, the mapping $R_k:W_0 \to X$ as follows. If $x = (x_1, x_2, \dots ,x_N,0,0,0, \dots ) \in X_0$ and $w = (x,x, \dots ,x)$, then
\begin{equation*}
	R_k w =
	\left\{
	\begin{array}{lll}
		(0_1,u_1,0_2,u_2, \dots ,0_N,u_N,0,0,0, \dots ) & \text{if} & n_k \ge N \\
		(0,0,0, \dots ) & \text{if} & n_k < N,
	\end{array}
	\right.
\end{equation*}
where $0_1 :=(0,0, \dots ,0)$ [$r_{t_1} n_k$ times], $0_l :=(0,0, \dots ,0)$ [$(r_{t_l} - r_{t_{l-1}}) n_k - N$ times] if $l \ge 2$ and $u_l := \big( {1 \over \la_{t_l}^{n_k}} x_1, \dots , {1 \over \la_{t_l}^{n_k}} x_N \big)$ $(l \ge 1)$. We have:
\begin{enumerate}
\item[\rm (a)] For each $j \in \{1, \dots ,p\}$ and each $x = (x_1, x_2, \dots ,x_N,0,0,0, \dots ) \in X_0$, $T_j^{n_k}x = 0$
as soon as $r_j n_k > N$, so $T_j^{n_k} \to 0$ $(k \to \infty )$ pointwise on $X_0$.
\item[\rm (b)] For every $w = (x, \dots ,x) \in W_0$ as before, the definition of \,$R_k$ \,together with (i) yields $R_k w \to 0$ as $k \to \infty$.
\item[\rm (c)] Fix $w = (x, \dots ,x) \in W_0$, where $x = (x_1, x_2, \dots ,x_N,0,0,0, \dots )$. For every $j \in \{1, \dots ,p\}$ there is exactly
one $l \in \{1, \dots ,d\}$ such that $|\la_j| = |\la_{t_l}|$, due to (ii) and (iii). Finally, if $n_k \ge N$, we have
\begin{equation*}
\begin{split}
T_j^{n_k} R_k w = \bigg(&\big({\la_j \over \la_{t_l}} \big)^{n_k} x_1, \big({\la_j \over \la_{t_l}} \big)^{n_k} x_2, \dots , \big({\la_j \over \la_{t_l}} \big)^{n_k} x_N, 0,0, \dots ,0, \\
&\big({\la_j \over \la_{t_{l+1}}} \big)^{n_k} x_1, \big({\la_j \over \la_{t_{l+1}}} \big)^{n_k} x_2, \dots ,\big({\la_j \over \la_{t_{l+1}}} \big)^{n_k} x_N, 0, 0, \dots ,0, \dots , \\
&\big({\la_j \over \la_{t_d}} \big)^{n_k} x_1, \big({\la_j \over \la_{t_d}} \big)^{n_k} x_2, \dots ,
\big({\la_j \over \la_{t_d}} \big)^{n_k} x_N, 0,0,0,0, \dots \bigg) .
\end{split}
\end{equation*}
%Note that $T_j^{n_k} R_k w$ is a finite sequence whose length does not depend on $k$.
It follows from (ii) that $({\la_j \over \la_{t_s}})^{n_k} x_\nu \to 0$ as $k \to \infty$ for all $s \in \{l+1, \dots ,d\}$ and all $\nu \in \{1, \dots ,N\}$,
while (1) entails that $({\la_j \over \la_{t_l}})^{n_k} x_\nu \to x_\nu$ as $k \to \infty$ for all $\nu \in \{1, \dots ,N\}$. Consequently,
$T_j^{n_k} R_k w \to (x_1, x_2, \dots ,x_N,0,0,0, \dots ) = x$.
\end{enumerate}
An application of the s-hypercyclicity criterion (see also Remark \ref{Remarks Delta + disjoint}.1) concludes the first part of the proof.

Now, suppose that $\la_1 B^{r_1}, \dots ,\la_p B^{r_p}$ are s-hypercyclic. Since hypercyclic operators on normed spaces have norm larger than 1, we obtain
$$1<\|\la_jB^{r_j}\|=|\la_j|\|B^{r_j}\|=|\la_j|$$
for all $j=1,\dots,p$ (cf.\,\,the proof of Corollary 4.2 in \cite{besperis2007}), i.e.\,\,condition (i) holds. For each $j\in\{1,\dots,p-1\}\backslash A$, we have $r_j<r_{j+1}$. Hence, as $\la_jB^{r_j},\la_{j+1}B^{r_{j+1}}$ are s-hypercyclic, Proposition \ref{Prop: s-h powers of wbs}, (ii) $\Rightarrow$ (iii), and the same approach as in the proof of Corollary 4.2 in \cite{besperis2007} yield $|\la_j|<|\la_{j+1}|$, i.e.\,\,condition (ii) holds. Finally, for each $j\in A$, we have $r_j=r_{j+1}$. Hence, the s-hypercyclicity of $$\la_jB^{r_j},\,\,\la_{j+1}B^{r_{j+1}}=\frac{\la_{j+1}}{\la_j}\cdot\la_jB^{r_j}$$ implies $|\la_{j+1}\slash\la_j|=1$ (see Proposition \ref{Prop: multiples}(a)) and thus $|\la_j|=|\la_{j+1}|$, i.e.\,\,condition (iii) holds.
\end{proof}

For instance, the operators \,$2B, 3B^2, -3B^2$, being not d-hypercyclic, are s-hypercyclic. Further study of d-hypercyclicity of weighted unilateral and bilateral backward shifts can be found in \cite{besmartinsanders2014}.

\section{s-hypercyclicity in spaces of holomorphic functions}

\quad Let $G \subset \C$ be a domain, that is, a nonempty connected open subset of $\C$. We endow the space $H(G)$ of all holomorphic
(or analytic) functions $G \to \C$ with the topology of uniform convergence on compacta, so that $H(G)$ becomes a separable Fr\'echet space.
In this section we are concerned with s-hypercyclicity of finite sets of operators on $H(G)$ (or on subspaces of it) for certain domains $G$.

\vskip .15cm

Recall that if $X$ is a topological vector space and $T \in L(X)$, then $T$ is said to be \textit{supercyclic} provided that there exists some $x_0  \in X$ whose
projective orbit $\{\la T^n x_0: \, n\in\N, \, \la \in \K\}$ is dense in $X$. If $T_1, \dots ,T_p \in L(X)$, they are called \textit{d-supercyclic} (see \cite{besmartinperis2011}) if there is $x_0 \in X$ such that
$\{\la [T_1^n, \dots ,T_p^n]x_0: \, n \in \N, \, \la \in \K\}$ is dense in $X^p$. Consistently, we say that
$T_1, \dots ,T_p$ are {\it s-supercyclic} whenever
$\ovl{\{\la [T_1^n, \dots ,T_p^n]x_0: \, n \in \N, \, \la \in \K\}}  \supset\Delta (X^p)$.

\vskip .15cm

Let $LFT(\D )$ denote the family of all linear fractional transformations $\varphi (z) = {az+b \over cz+d}$ of the
complex plane such that $\varphi (\D ) \subset \D$. The subfamily Aut$(\D)$ of automorphisms of $\D$ consists of all
onto members of $LFT(\D )$. See e.g.~\cite[Chapter 1]{shapiro1993} for terminology related to these families.
If $\nu \in \R$, then $S_\nu$ denotes the weighted Hardy space
$S_\nu = \{f(z) = \sum_{n \ge 0} a_nz^n \in H(\D ): \, \|f\| := (\sum_{n \ge 0} |a_n|^2(n+1)^{2\nu})^{1/2} < \infty \}$. Each $S_\nu$ is a Hilbert space,
and the choices $\nu = -1/2,0,1/2$ correspond, res\-pec\-ti\-ve\-ly, to the classical Bergman, Hardy and Dirichlet spaces.
Thanks to the results in \cite{besmartinperis2011}, we obtain without effort the next two assertions.

\begin{proposition} \label{Prop: composition on H(D)}
Let $\varphi_1, \dots , \varphi_p \in LFT(\D )$ pairwise distinct. Then the following are equivalent:
\begin{enumerate}
\item[\rm (a)] $C_{\varphi_1}, \dots , C_{\varphi_p}$ are s-supercyclic on $H(\D )$.
\item[\rm (b)] $\mu_1 C_{\varphi_1}, \dots , \mu_p C_{\varphi_p}$ are s-mixing on $H(\D )$ for all nonzero scalars $\mu_1, \dots ,\mu_p$.
\item[\rm (c)] $C_{\varphi_1}, \dots , C_{\varphi_p}$ are d-supercyclic on $H(\D )$.
\item[\rm (d)] $\mu_1 C_{\varphi_1}, \dots , \mu_p C_{\varphi_p}$ are d-mixing on $H(\D )$ for all nonzero scalars $\mu_1, \dots ,\mu_p$.
\item[\rm (e)] $\varphi_1 \dots , \varphi_p$ have no fixed point in $\D$, and satisfy that if any two $\varphi_l, \varphi_j$ have the same attractive
fixed point $\alpha$, then \,$\varphi_l ' (\alpha ) = \varphi_j ' (\alpha ) < 1$ \,is not possible.
\end{enumerate}
\end{proposition}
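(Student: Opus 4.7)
The plan is to close the circle of equivalences by combining elementary implications with the disjoint versions supplied by \cite{besmartinperis2011}.

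First I record the easy directions. The defining condition of d-mixing specializes to that of s-mixing by taking all the open sets $V_j$ equal, so (d) $\Rightarrow$ (b). By Proposition \ref{Prop:s-transitive-s-mixing}(ii), s-mixing implies s-hypercyclicity, which in turn implies s-supercyclicity (take $\la=1$ in the projective orbit); hence (b) $\Rightarrow$ (a). Likewise, d-supercyclicity means density of the projective joint orbit in $H(\D)^p$, which \emph{a fortiori} gives density relative to the closed subset $\Delta(H(\D)^p)$, so (c) $\Rightarrow$ (a). This leaves only the reverse direction to handle.

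For the reverse direction I would quote from \cite{besmartinperis2011} the characterization (e) $\Leftrightarrow$ (c) $\Leftrightarrow$ (d) of d-supercyclicity and of d-mixing-with-all-nonzero-scalars for pairwise distinct $LFT(\D)$ composition operators on $H(\D)$. Once this is in hand, the entire equivalence reduces to proving (a) $\Rightarrow$ (e). Assume that $C_{\varphi_1}, \dots, C_{\varphi_p}$ are s-supercyclic with common vector $f_0\in H(\D)$. Projection onto the $j$-th coordinate shows that $f_0$ is supercyclic for each individual $C_{\varphi_j}$; since no LFT composition operator on $H(\D)$ whose symbol has an interior fixed point can be supercyclic (for non-automorphisms $\varphi^n$ collapses on compacta to the interior fixed point, reducing every orbit to a constant; for elliptic automorphisms $C_\varphi$ preserves evaluation at the fixed point and merely rotates Taylor coefficients), no $\varphi_j$ fixes a point of $\D$. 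This settles the first half of (e).

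For the second half, assume by contradiction that two $\varphi_l, \varphi_j$ share an attractive fixed point $\alpha \in \partial \D$ with $\varphi_l'(\alpha)=\varphi_j'(\alpha)<1$. A Koenigs-type linearization at $\alpha$ then forces $\varphi_l^n$ and $\varphi_j^n$ to agree to leading order in $n$, so $C_{\varphi_l}^n f_0$ and $C_{\varphi_j}^n f_0$ become asymptotically proportional through a factor that depends on $(l,j)$ but not on $n$; no single scalar $\la_n$ can then send both simultaneously to a generic common target $g \in H(\D)$. I plan to model this step on the rigidity argument used in \cite{besmartinperis2011} to rule out d-supercyclicity in the same configuration, verifying that the obstruction already lives on $\Delta(H(\D)^p)$ and therefore precludes s-supercyclicity as well.

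\textbf{Main obstacle.} The delicate step is precisely this last bit of (a) $\Rightarrow$ (e). In the d-setting, the rigidity $\varphi_l^n \sim \varphi_j^n$ is immediately catastrophic because the targets $(g_1, g_2)$ can be picked distinct; in the s-setting both targets coincide, so a naive reading makes the rigidity look helpful rather than harmful. The point to verify is that even with a single target $g$ and a common scalar $\la_n$, the asymptotic proportionality forced by the shared fixed-point data still leaves a family too rigid to reach a generic $g$. If this direct analysis proves awkward, a plausible fallback is to prove (a) $\Rightarrow$ (c) directly, perturbing an s-supercyclic vector into a d-supercyclic one by exploiting the Fr\'echet structure of $H(\D)$ and the hereditary mixing properties of the $C_{\varphi_j}$'s guaranteed under the fixed-point hypothesis.
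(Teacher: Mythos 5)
Your outline follows the same route as the paper: quote the equivalence (c) $\Leftrightarrow$ (d) $\Leftrightarrow$ (e) from \cite{besmartinperis2011}, observe that (d) $\Rightarrow$ (b) $\Rightarrow$ (a) is trivial, and close the circle with (a) $\Rightarrow$ (e). The easy implications are handled correctly (your extra observation that (c) $\Rightarrow$ (a) is also fine, though not needed once the circle closes through (e)), and the first half of (a) $\Rightarrow$ (e) -- no interior fixed points, via individual supercyclicity of each $C_{\varphi_j}$ -- is exactly what the cited proof uses.

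The one place you stall, the second half of (a) $\Rightarrow$ (e), is precisely the only substantive content of this proposition, and your worry resolves the way you hoped rather than the way you feared: a closer look at the proof of Lemma 14 in \cite{besmartinperis2011} (hyperbolic $\varphi_l,\varphi_j$ sharing an attractive fixed point $\alpha$ with $\varphi_l'(\alpha)=\varphi_j'(\alpha)<1$ cannot give d-supercyclic $C_{\varphi_l},C_{\varphi_j}$) shows that the contradiction there is derived from the simultaneous approximation of a \emph{single} function $g$ by the projective orbit -- it never exploits the freedom to choose two distinct targets. Hence that lemma already rules out s-supercyclicity, and no new Koenigs-type analysis is needed; the ``rigidity looks helpful in the s-setting'' concern is unfounded because the obstruction lives on the diagonal. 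I would advise against your fallback of proving (a) $\Rightarrow$ (c) directly by ``perturbing'' an s-supercyclic vector into a d-supercyclic one: s-supercyclicity is in general strictly weaker than d-supercyclicity (the pair $T,T$ is the standard counterexample in the hypercyclic setting), so any such implication here must pass through the geometric condition (e) rather than through a soft perturbation argument.
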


\begin{proof}
The equivalence of (c), (d) and (e) is proved in \cite[Theorem 4]{besmartinperis2011}.
The implications (d)  $\Rightarrow$ (b)  $\Rightarrow$ (a) are trivial. Finally, (a) $\Rightarrow$ (e) is proved in fact in the proof of
Theorem 4 in \cite{besmartinperis2011}. Indeed, it is used there a result (Lemma 14 in \cite{besmartinperis2011}) asserting that
if $\varphi_1,\varphi_2 \in LFT(\D )$ are hyperbolic and share an attractive fixed point $\alpha$ with $\varphi_1 ' (\alpha ) = \varphi_2 ' (\alpha )$, then $C_{\varphi_1}, C_{\varphi_2}$
are not d-supercyclic on $H(\D )$. But a closer look at its proof shows that $C_{\varphi_1}, C_{\varphi_2}$ are in fact even not s-supercyclic;
indeed, via contradiction, only one function $g$ \,is assumed to be simultaneously approximated by projective orbits.
\end{proof}

\begin{proposition}
Let $\varphi_1, \dots , \varphi_p \in LFT(\D )$ pairwise distinct and let $\nu < 1/2$. Then the following are equivalent:
\begin{enumerate}
\item[\rm (a)] $C_{\varphi_1}, \dots , C_{\varphi_p}$ are s-supercyclic on $S_\nu$.
\item[\rm (b)] $C_{\varphi_1}, \dots , C_{\varphi_p}$ are s-mixing on $S_\nu$.
\item[\rm (c)] $C_{\varphi_1}, \dots , C_{\varphi_p}$ are d-supercyclic on $S_\nu$.
\item[\rm (d)] $C_{\varphi_1}, \dots , C_{\varphi_p}$ are d-mixing on $S_\nu$.
\item[\rm (e)] Each $\varphi_l$ is a parabolic automorphism or a hyperbolic map without
               fixed points in $\D$, and there are no two $\varphi_l, \varphi_j$ having a common fixed point $\alpha$
               such that \,$\varphi_l ' (\alpha ) = \varphi_j ' (\alpha ) < 1$.
\end{enumerate}
\end{proposition}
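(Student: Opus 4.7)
The plan is to mirror closely the structure of the proof of Proposition \ref{Prop: composition on H(D)}, reading off (c)$\Leftrightarrow$(d)$\Leftrightarrow$(e) from the literature and then inserting the obvious trivial links and one copy-of-the-previous-argument link to close the cycle. First I would invoke the $S_\nu$-counterpart of Theorem 4 of \cite{besmartinperis2011} to obtain (c)$\Leftrightarrow$(d)$\Leftrightarrow$(e): the same characterization via attractive fixed points and equal derivatives is in fact the content of that reference when the ambient space is taken to be a weighted Hardy space with $\nu<1/2$, the hypothesis on $\nu$ being exactly what ensures the Gallardo-Guti\'errez--Montes-Rodr\'{\i}guez framework applies to the individual $C_{\varphi_l}$'s.

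Next I would dispose of the cheap implications. The step (d)$\Rightarrow$(b) follows directly from the definitions by specializing the d-mixing condition to $V_1=\cdots=V_p=V$, which is precisely s-mixing. The step (b)$\Rightarrow$(a) is obtained by noting that s-mixing implies s-transitivity, hence, via Proposition \ref{Prop:s-transitive-s-mixing}, s-hypercyclicity of $C_{\varphi_1},\dots,C_{\varphi_p}$, and s-hypercyclicity trivially entails s-supercyclicity (take $\la=1$ in the projective orbit).

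To complete the circle I have to establish (a)$\Rightarrow$(e), and this I would split into its two components. For the first half of (e), I would observe that s-supercyclicity of the tuple forces every single $C_{\varphi_l}$ to be supercyclic on $S_\nu$; the classical characterization of supercyclic composition operators on $S_\nu$ ($\nu<1/2$) of Gallardo-Guti\'errez and Montes-Rodr\'{\i}guez then immediately yields that each $\varphi_l$ is either a parabolic automorphism of $\D$ or a hyperbolic linear fractional map having no fixed point inside $\D$. For the second half, I would rule out the configuration of two maps $\varphi_l,\varphi_j$ sharing an attractive fixed point $\alpha$ with $\varphi_l'(\alpha)=\varphi_j'(\alpha)<1$ by re-examining the $S_\nu$-version of Lemma 14 of \cite{besmartinperis2011}, exactly as we did in the proof of Proposition \ref{Prop: composition on H(D)}: the contradiction derived there is obtained by exhibiting a single function $g\in S_\nu$ that cannot be simultaneously approximated by the projective orbits $\la_k C_{\varphi_l}^{n_k}f_0$ and $\la_k C_{\varphi_j}^{n_k}f_0$ along a common sequence $(\la_k,n_k)$, so the argument never needs the stronger d-supercyclicity and remains valid under hypothesis (a).

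The step I expect to be the main obstacle is the last one: making certain that the $S_\nu$-version of the Lemma 14 argument really rests on the obstruction of a single vector and not on a tuple in $S_\nu^p$. In the $H(\D)$ setting this was transparent, but in $S_\nu$ the admissible norm estimates are more rigid, so if the published argument happens to use genuinely multi-component test functions I would have to reconstruct it: specifically, I would look for an eigenfunction (or near-eigenfunction) of $C_{\varphi_l}^*$ associated to the common attractive fixed point $\alpha$ and use the equality of derivatives at $\alpha$ to force any simultaneous approximant of a suitable $g$ to satisfy two incompatible asymptotic identities, thereby producing the contradiction from a single target function.
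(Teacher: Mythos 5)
Your skeleton agrees with the paper's for most of the cycle: (c)$\Leftrightarrow$(d)$\Leftrightarrow$(e) is quoted from \cite{besmartinperis2011} (the paper cites Theorem~3 there, which \emph{is} the $S_\nu$ statement, rather than an ``$S_\nu$-counterpart of Theorem~4''), the implications (d)$\Rightarrow$(b)$\Rightarrow$(a) are dismissed as trivial, and the first assertion of (e) is extracted from (a) exactly as you do, by noting that s-supercyclicity of the tuple gives supercyclicity of each individual $C_{\varphi_l}$ and that only this is used in the relevant part of the proof of Theorem~3 in \cite{besmartinperis2011}.

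Where you diverge --- and where your proposal has its one real gap --- is the second assertion of (e). You propose to rework the Lemma~14 argument of \cite{besmartinperis2011} \emph{inside} $S_\nu$, and you candidly admit you do not know whether that argument, in the $S_\nu$ setting, rests on a single test function or on a genuinely multi-component one; your fallback (hunting for near-eigenfunctions of $C_{\varphi_l}^*$ at the common fixed point) is only a sketch, not a proof. The paper sidesteps this entirely: the Comparison Principle (Proposition~8 of \cite{besmartinperis2011}), applied to the continuous dense-range inclusion $S_\nu\hookrightarrow H(\D)$ which intertwines the composition operators, transfers s-supercyclicity on $S_\nu$ to s-supercyclicity on $H(\D)$, and then the second assertion of (e) follows at once from the implication (a)$\Rightarrow$(e) of Proposition~\ref{Prop: composition on H(D)}, which was already established (and where the single-function reading of Lemma~14 was carried out in the transparent $H(\D)$ setting). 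So the concern you flag as ``the main obstacle'' is genuine for the route you chose, but it is unnecessary: replace your direct $S_\nu$ analysis by the Comparison Principle reduction and the proof closes.
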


\begin{proof}
The equivalence of (c), (d) and (e) is proved in \cite[Theorem 3]{besmartinperis2011}.
The implications (d)  $\Rightarrow$ (b)  $\Rightarrow$ (a) are trivial.
As for (a) $\Rightarrow$ (e), observe that in the proof of Theorem 3 in \cite{besmartinperis2011}, only the supercyclicity of
each $C_{\varphi_l}$ is necessary for the first assertion in (e) and that the Comparison Principle \cite[Proposition 8]{besmartinperis2011}
--that also works for s-supercyclicity-- implies that $C_{\varphi_1}, \dots , C_{\varphi_p}$ are s-supercyclic on $H(\D )$.
Now, the second assertion of (e) follows from Proposition \ref{Prop: composition on H(D)}.
\end{proof}

\begin{remarks}
1. Recall that if $X$ is an F-space and $T \in L(X)$ is invertible and hypercyclic, then $T^{-1}$ is also hypercyclic.
Analogously as in Example 22 in \cite{besmartinperis2011}, by combining the preceding two propositions, we obtain that there are
hyperbolic $\varphi_1, \varphi_2 \in {\rm Aut} (\D )$ such that $C_{\varphi_1}, C_{\varphi_2}$ are d-hypercyclic (so s-hypercyclic)
on $H^2(\D )$ (the Hardy space) and on $H(\D )$, and
$C_{\varphi_1^{-1}} = (C_{\varphi_1})^{-1}, C_{\varphi_2^{-1}} = (C_{\varphi_2})^{-1}$ are even not s-supercyclic on $H^2(\D )$ or $H(\D )$
(note that $\varphi_1^{-1}$ and $\varphi_2^{-1}$ are also hyperbolic).
Hence, in general, the d-hypercyclicity of $T_1, \dots ,T_p$ does {\it not} imply the s-hypercyclicity of $T_1^{-1}, \dots , T_p^{-1}$ if
\,$T_1, \dots ,T_p$ are invertible. Moreover, finitely many composition operators generated by non-elliptic automorphisms of $\D$ may be {\it not}
s-hypercyclic on $H(\D )$ or on $H^2(\D )$.

\vskip 5pt

\noindent 2. Further study of d-hypercyclicity of composition operators, this time on weighted Bergman spaces on $\D$, is performed in \cite{zhangzhou2016}.
\end{remarks}

In 1929 Birkhoff \cite{birkhoff1929} proved that the translation operator $\tau_a$ $(a \in \C \setminus \{0\})$
given by $(\tau_a f)(z) = f(z+a)$ is hypercyclic on the space $H(\C)$ of entire functions. It is proved in \cite[Prop.~5.5]{bernal2007}
and \cite[Theorem 3.1]{besperis2007} that if $a_1, \dots , a_p$ are pairwise distinct nonzero complex numbers, then $\tau_{a_1}, \dots ,\tau_{a_p}$ are d-hypercyclic. Trivially, we obtain: if $a_1, \dots ,a_p \in \C \setminus \{0\}$, then $\tau_{a_1}, \dots ,\tau_{a_p}$ are s-hypercyclic.
As the next proposition shows, we may obtain a slight extension to weighted translation operators. %Before this, we need the following lemma showing the existence of special entire functions.
%
%\begin{lemma} \label{Lemma: entire fast decreasing}
%Let $a,b,c \in \R$ such that $0 < b - a < 2\pi$ and $c > 0$, and $S_{a,b,c} := \{re^{i \theta} \in \C : \,r \ge c$ and $a \le \theta \le b\}$.
%Then there exists an entire function \,$\varphi$ \,such that
%$$
%\varphi (0) = 1 \hbox{ \ and \ } \lim_{z \to \infty \atop z \in S_{a,b,c}} z^m \, \varphi (z) = 0 \hbox{ \ for all } \, m \in \N_0.
%$$
%\end{lemma}
%
%\begin{proof}
%We are going to use an important tangential approximation theorem due to Arakelian which can be found, for instance, in \cite[pp.~161--162]{gaier1987}.
%Consider the set \,$F := S_{a,b,c} \cup \{0\}$. Then \,$F$ \,is a closed subset of \,$\C$ \,such that \,$\C_\infty \setminus F$ \,connected and locally connected in \,$\C_\infty$. Then, for every continuous function \,$f : F \to \C$ \,that is holomorphic in \,$F^0$ (= the interior of $F$) and every continuous function \,$\epsilon : [0,+\infty ) \to (0,+\infty )$, there is \,$g \in H(\C )$ \,such that \,$...$ ***************
%... ... APLICAR ARAKELIAN-APROX.TANGENCIAL A LA UNION DE S CON EL 0, Y DIVIDIR POR PHI(0). \cite[pp.~161--162]{gaier1987} *********** ... ...
%\end{proof}

\begin{proposition}
Let \,$p \ge 2$, and let \,$a_1, \dots, a_p,\la_1, \dots , \la_p \in \C \setminus \{0\}$
\,such that \,$|\la_j| = |\la_l|$ \,for all \,$j,l \in \{1, \dots ,p\}$ \,with \,$a_j = a_l$. Then
there is a sequence \,$(n_k) \subset \N$ \,such that
the sequences \,$(\la_1 \tau_{a_1})^{n_k}, \dots , (\la_p \tau_{a_p})^{n_k}$ \,are s-mixing. In particular,
the operators \,$\la_1 \tau_{a_1}, \dots , \la_p \tau_{a_p}$ \,are densely s-hypercyclic on \,$H(\C )$.
\end{proposition}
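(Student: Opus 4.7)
The plan is to establish s-mixing of the subsequences $((\lambda_j \tau_{a_j})^{n_k})_{k \in \N}$, $j = 1, \dots, p$, for a suitably chosen $(n_k)$; the dense s-hypercyclicity claim then follows from Proposition~\ref{Prop:s-transitive-s-mixing}(ii), because any s-universal element for the subsequences is automatically s-universal for the full sequences $((\lambda_j \tau_{a_j})^n)$. The choice of $(n_k)$ is driven by a Dirichlet-set argument analogous to the proofs of Propositions \ref{Prop: multiples}(b) and \ref{Prop: s-hc unweighted b.s.}: letting $b_1,\dots,b_d$ be the distinct values among $a_1,\dots,a_p$ and $j_l$ a representative of $I_l := \{j : a_j = b_l\}$, the hypothesis $|\lambda_j| = |\lambda_{j_l}|$ for $j \in I_l$ makes the finite set $E := \{\lambda_j/\lambda_{j_l} : 1 \le l \le d,\, j \in I_l\}$ a subset of $\T$, hence Dirichlet. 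Therefore I can choose a strictly increasing sequence $(n_k) \subset \N$ with $(\lambda_j/\lambda_{j_l})^{n_k} \to 1$ as $k \to \infty$ for every admissible pair.

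To verify s-mixing along $(n_k)$, I would fix nonempty open sets $U, V \subset H(\C)$, pick polynomials $f \in U$ and $g \in V$, a closed disk $K_0 := \ovl{B}(0, R)$, and $\ve > 0$ such that the sup-norm $\ve$-neighbourhoods of $f$ and $g$ on $K_0$ lie in $U$ and $V$ respectively. For $k$ sufficiently large the disks $K_0$ and $K_{l,k} := \ovl{B}(n_k b_l, R)$ ($l = 1, \dots, d$) are pairwise disjoint, and the complement of their union in $\C$ is connected. Define a function $\phi_k$ holomorphic on a neighbourhood of $K_0 \cup \bigcup_{l=1}^d K_{l,k}$ by
$$\phi_k(z) := f(z) \text{ on } K_0, \qquad \phi_k(z) := \lambda_{j_l}^{-n_k}\, g(z - n_k b_l) \text{ on } K_{l,k}.$$
Runge's theorem yields, for any prescribed $\delta_k > 0$, an entire function (even a polynomial) $h_k$ with $\|h_k - \phi_k\|_{K_0 \cup \bigcup_l K_{l,k}} < \delta_k$.

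The final task is error control. For $z \in K_0$ and $j \in I_l$, since $z + n_k b_l \in K_{l,k}$, a direct computation gives
$$|(\lambda_j \tau_{a_j})^{n_k} h_k(z) - g(z)| \,\le\, |\lambda_j|^{n_k} \delta_k \,+\, \bigl|(\lambda_j/\lambda_{j_l})^{n_k} - 1\bigr|\,\|g\|_{K_0},$$
while $|h_k(z) - f(z)| < \delta_k$ on $K_0$. Setting $\delta_k := \ve/(2\max_{1 \le j \le p} |\lambda_j|^{n_k})$ and using the Dirichlet property to make $|(\lambda_j/\lambda_{j_l})^{n_k} - 1|\,\|g\|_{K_0} < \ve/2$ for all $k \ge k_0$ and all admissible $(j,l)$, one gets $h_k \in U$ and $(\lambda_j \tau_{a_j})^{n_k} h_k \in V$ simultaneously for all $j = 1, \dots, p$ and all $k \ge k_0$. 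That is precisely the s-mixing property of the subsequences along $(n_k)$. The main delicate point, which I expect to be the chief obstacle, is absorbing the unbounded weight growth $|\lambda_j|^{n_k}$ into the Runge error; this is handled by choosing the approximation tolerance $\delta_k$ only \emph{after} $n_k$ is fixed, exploiting that Runge permits arbitrarily tight approximation.
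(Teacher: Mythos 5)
Your argument is correct in substance and rests on the same two pillars as the paper's proof --- Runge approximation on pairwise disjoint disks around $0, n_kb_1, \dots, n_kb_d$ with a tolerance chosen only \emph{after} $n_k$ is fixed, and a Dirichlet-set choice of $(n_k)$ synchronizing the unimodular ratios $\la_j/\la_{j_l}$ --- but you organize it differently. The paper first proves that the representative operators $S_l := \mu_l\tau_{b_l}$ (one for each distinct translation parameter $b_l$) are d-mixing, hence s-mixing, for the \emph{full} sequence of iterates, and only afterwards transfers s-universality from the $S_l$'s to all of the $\la_j\tau_{a_j}$'s along every subsequence of the Dirichlet sequence, invoking Proposition \ref{Prop:s-transitive-s-mixing}(ii) to conclude s-mixing along $(n_k)$. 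You merge these two steps into the single estimate $|(\la_j\tau_{a_j})^{n_k}h_k(z)-g(z)| \le |\la_j|^{n_k}\delta_k + |(\la_j/\la_{j_l})^{n_k}-1|\,\|g\|_{K_0}$, which yields s-mixing of the subsequences directly. Your version is shorter and self-contained; the paper's intermediate step buys the stronger fact that the representatives are d-mixing (with possibly \emph{different} targets $g_1,\dots,g_q$ on the different disks), which is of independent interest but not needed for the stated proposition.

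One small slip: with $\delta_k := \ve/(2\max_j|\la_j|^{n_k})$ you may have $\delta_k > \ve$ when all $|\la_j|<1$ (the $\la_j$ are only assumed nonzero), in which case $\|h_k-f\|_{K_0}<\delta_k$ no longer forces $h_k\in U$. Replace $\delta_k$ by $\min\{\ve/2,\,\ve/(2\max_j|\la_j|^{n_k})\}$, or by $\ve/(1+\max_j|\la_j|^{n_k})$ in the spirit of the paper's choice $\ve/(1+|\mu_l^n|)$; everything else goes through unchanged.
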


\begin{proof}
Select a finite sequence $\{j(1) < j(2) \cdots < j(q)\} \subset \{1, \dots ,p\}$ satisfying that, if \,$b_l := a_{j(l)}$
$(l=1, \dots ,q)$, then the \,$b_l$'s \,are pairwise distinct and \,$\{a_1, \dots ,a_p\} = \{b_1, \dots ,b_q\}$. Let \,$\mu_l := \la_{j(l)}$.
Consider the operators \,$T_j := \la_j \tau_{a_j}$ $(j=1, \dots ,p)$
\,and \,$S_l := T_{j(l)} = \mu_l \tau_{b_l}$ $(l=1, \dots ,q)$.

\vskip 6pt

Let us prove that $S_1, \dots ,S_q$ are s-mixing. In fact, by following the approach of the proof of \cite[Theorem 3.1]{besperis2007},
we can prove that they are even d-mixing. To this end, and taking into account that the sets \,$V(h,r,\ve ) := \{f \in H(\C ) : \,|f(z)-h(z)| < \ve$ for all
$z \in \ovl{B}(0,r)\}$ $(h \in H(\C), \, \ve > 0, \, r > 0)$, form a basis for the topology of $H(\C )$, it is enough to prove that, for given
\,$h,g_1, \dots ,g_q \in H(\C )$ \,and \,$\ve , r > 0$, there is \,$n_0 \in \N$ \,such that, for every \,$n \ge n_0$, there
exists an entire function \,$f$ \,with
$$
|f(z) - h(z)| < \ve \hbox{ and } |(S_l^n f)(z) - g_l(z)| < \ve \quad (z \in \ovl{B}(0,r), \,l=1, \dots ,q). \eqno (1)
$$
Select \,$n_0 \in \N$ \,with \,$n_0 > \max_{i \ne l} {2r \over |b_i - b_l|} + \max_{1 \le l \le q} {2r \over |b_l|}$.
Then, for each $n \ge n_0$, the disks \,$\ovl{B}(0,r), \, \ovl{B}(nb_1,r), \dots , \, \ovl{B}(nb_q,r)$ \,are pairwise disjoint.
Pick $s > r$ such that the disks \,$\ovl{B}(0,s), \, \ovl{B}(nb_1,s), \dots , \, \ovl{B}(nb_q,s)$ \,are still pairwise disjoint.
Let \,$K := \ovl{B}(0,r) \cup \ovl{B}(nb_1,r) \cup \cdots \cup \ovl{B}(nb_q,r)$
\,and \,$\Omega := {B}(0,s) \cup B(nb_1,s) \cup \cdots \cup B(nb_q,s)$.
Note that $\Omega$ is an open set, $\Omega \supset K$ \,and \,$K$ \,is a compact subset having connected complement.
Consider the function \,$F: \Omega \to \C$ \,defined by
$$
F(z) = h(z) \hbox{ if } z \in B(0,s) \hbox{ \ and \ } F (z) := \mu_l^{-n} g_l(z - nb_l) \hbox{ if } z \in B(nb_l,s) \,\, (1 \le l \le q).
$$
Then \,$F \in H(\Omega)$. From Runge's approximation theorem (see e.g.~\cite{gaier1987}), it follows that there exists a
polynomial \,$f$ (so $f \in H(\C )$) such that \,$|f(z) - F(z)| < \ve /(1 + |\mu^n_l|)$ for all $z \in K$. But this implies that
\,$|f(z) - h(z)| < \ve$ \,on \,$\ovl{B}(0,r)$ \,and \,$|\mu_l^n f(z) - g_l(z - nb_l)| < \ve$ \,on \,$\ovl{B}(nb_l,r)$. Since the last inequality
is equivalent to \,$|\mu_l^n f(z + nb_l) - g_l(z)| < \ve$ \,on \,$\ovl{B}(0,r)$, (1) is obtained.

\vskip 6pt

As the set \,$D := \{\la_j/\la_l: \, j,l \in \{1, \dots ,p\}$ with $a_j = a_l \} \subset \T$ \,is finite, it is a Dirichlet set.
Then there is a strictly increasing sequence \,$(n_k) \subset \N$ \,such that \,$\xi^{n_k} \to 1$ as $k \to \infty$,
for all $\xi \in D$.

\vskip 6pt

Fix a subsequence \,$(m_k)$ \,of \,$(n_k)$. Since \,$S_1, \dots ,S_q$ \,are s-mixing, the set \break
s-$HC((S_1^{m_k}), \dots ,(S_q^{m_k}))$ \,is dense (see Proposition \ref{Prop:s-transitive-s-mixing}).
Fix \,$f$ \,in \break
s-$HC((S_1^{m_k}), \dots ,(S_q^{m_k}))$. For each \,$\nu \in \{1, \dots ,p\}$ \,there is a unique \,$l = l(\nu ) \in \{1, \dots ,q\}$
\,such that \,$a_\nu = b_l$, so that \,$|\la_\nu | = |\mu_l|$. Observe that \,$\xi_\nu := \la_\nu /\mu_l \in D$. Then \,$\xi_\nu^{n_k} \to 1$, hence \,$\xi_\nu^{m_k} \to 1$ $(k \to \infty )$ \,for all \,$\nu \in \{1, \dots ,p\}$. Given \,$g \in H(\C )$, we can find a subsequence \,$(p_k)$ \,of \,$(m_k)$ with \,$S_{l(\nu )}^{p_k} f \to g$ $(k \to \infty )$ \,uniformly on compacta for every \,$\nu \in \{1, \dots , p\}$. Since  \,$\xi_\nu^{p_k} \to 1$ for all $\nu$, we obtain that
\,$T_\nu^{p_k} f = \xi_\nu^{p_k} S_{l(\nu )}^{p_k} f \longrightarrow 1 \cdot g = g$ \,$(k \to \infty)$ \,uniformly on compacta for every \,$\nu = 1, \dots ,p$.
Therefore \,$f \in$ s-$HC((T_1^{m_k}), \dots ,(T_p^{m_k}))$, which shows that this set is dense. By Proposition \ref{Prop:s-transitive-s-mixing}, the sequences
$(T_1^{n_k}), \dots ,(T_p^{n_k})$ are s-mixing, as required.
\end{proof}

Another important collection of operators on $H(\C )$ is that of differentiation operators.
Consider the derivative operator $D : f \in H(\C ) \mapsto f' \in H(\C )$. Its hypercyclicity on $H(\C )$ was proved by
MacLane in 1952 \cite{maclane1952}. It is shown in \cite[Prop.~3.3]{besperis2007} that if $p \ge 2$, $r_1,\dots,r_p\in\mathbb{N}$ with $r_1 < \cdots < r_p$ and
$\la_1, \dots , \la_p \in \C \setminus \{0\}$, then $\la_1 D^{r_1}, \dots , \la_p D^{r_p}$ are d-mixing, so densely d-hypercyclic.
Concerning s-hypercyclicity, the following proposition shows that somewhat softer assumptions are allowed, although, similarly to the last proposition,
we have not been able to obtain the s-mixing property for the whole sequences.

\vskip .15cm

\begin{proposition}
Let \,$r_1 \le \cdots \le r_p$ \,be positive integers and \,$\la_1,  \dots , \la_p \in \C \setminus \{0\}$, where $p \ge 2$.
Suppose that \,$|\la_j| = |\la_l|$ \,for all $j,l \in \{1, \dots ,p\}$ \,with \,$r_j = r_l$.

\vskip 3pt

Then there is a sequence $(n_k) \subset \N$ such that the sequences $(\la_1 D^{r_1})^{n_k}, \dots , (\la_p D^{r_p})^{n_k}$ are s-mixing. In particular, the operators
$\la_1 D^{r_1}, \dots , \la_p D^{r_p}$ are densely s-hypercyclic on $H(\C )$.
\end{proposition}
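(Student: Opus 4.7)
The plan is to mimic, almost verbatim, the proof of the preceding proposition on weighted translations, substituting iterates of the derivative operator $D$ for the translations $\tau_{a_l}$ and invoking the d-mixing result of B\`es--Peris \cite[Prop.~3.3]{besperis2007} in place of their translation result. The key observation is that two operators $\lambda_j D^{r_j}$ and $\lambda_l D^{r_l}$ with $r_j = r_l$ differ only by a scalar factor, which by hypothesis lies on $\mathbb{T}$ and can therefore be synchronized using a Dirichlet-type argument.

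First I would collapse the list by differentiation order. Let $\{s_1 < s_2 < \cdots < s_q\}$ be the distinct values among $r_1 \le \cdots \le r_p$, fix for each $l$ a representative index $j(l)$ with $r_{j(l)} = s_l$, and set $\mu_l := \lambda_{j(l)}$. Define $T_\nu := \lambda_\nu D^{r_\nu}$ ($\nu = 1, \dots, p$) and $S_l := \mu_l D^{s_l}$ ($l = 1, \dots, q$). Since $s_1 < \cdots < s_q$ are strictly increasing, \cite[Prop.~3.3]{besperis2007} gives that $S_1, \dots, S_q$ are d-mixing on $H(\mathbb{C})$, hence s-mixing. For each $\nu \in \{1, \dots, p\}$, let $l(\nu)$ be the unique index with $r_\nu = s_{l(\nu)}$, and set $\xi_\nu := \lambda_\nu / \mu_{l(\nu)}$. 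By the hypothesis $|\lambda_j| = |\lambda_l|$ whenever $r_j = r_l$, each $\xi_\nu$ lies in $\mathbb{T}$, so the finite set $D := \{\xi_\nu : \nu = 1, \dots, p\}$ is a Dirichlet subset of $\mathbb{T}$; consequently there is a strictly increasing sequence $(n_k) \subset \mathbb{N}$ with $\xi^{n_k} \to 1$ for all $\xi \in D$.

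Next, I would transfer s-mixing from the $S_l$'s to the $T_\nu$'s via Proposition \ref{Prop:s-transitive-s-mixing}(ii). Fix an arbitrary subsequence $(m_k)$ of $(n_k)$. By the s-mixing property of $S_1, \dots, S_q$, the set $s\hbox{-}HC((S_1^{m_k}), \dots, (S_q^{m_k}))$ is dense in $H(\mathbb{C})$. Pick any $f$ in this set; for a given $g \in H(\mathbb{C})$, extract a subsequence $(p_k)$ of $(m_k)$ with $S_l^{p_k} f \to g$ uniformly on compacta for every $l = 1, \dots, q$. Since $T_\nu = \xi_\nu S_{l(\nu)}$ and both operators are scalar multiples of the same power of $D$, one has the identity
$$
T_\nu^{p_k} f = \xi_\nu^{p_k}\, S_{l(\nu)}^{p_k} f \qquad (\nu = 1, \dots, p).
$$
Combined with $\xi_\nu^{p_k} \to 1$, this yields $T_\nu^{p_k} f \to g$ uniformly on compacta for every $\nu = 1, \dots, p$, so $f \in s\hbox{-}HC((T_1^{m_k}), \dots, (T_p^{m_k}))$. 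Thus the latter set is dense for every subsequence $(m_k)$ of $(n_k)$, which by Proposition \ref{Prop:s-transitive-s-mixing}(ii) is precisely the s-mixing of $(T_1^{n_k}), \dots, (T_p^{n_k})$. Dense s-hypercyclicity of $\lambda_1 D^{r_1}, \dots, \lambda_p D^{r_p}$ on $H(\mathbb{C})$ is then immediate.

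The argument contains no real obstacle: the one subtle point is that, as in the weighted translation case, one cannot hope to obtain s-mixing for the full sequences of iterates, only along a fixed Dirichlet sequence $(n_k)$, because synchronizing the unimodular ratios $\xi_\nu$ requires a subsequence. Both the d-mixing input and the Dirichlet/subsequence passage are standard and parallel the previous proposition line by line.
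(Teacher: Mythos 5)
Your proof is correct, but it takes a genuinely different route from the paper. The paper proves this proposition by directly verifying the s-hypercyclicity criterion (Theorem \ref{Prop: s-hc Delta-criterion}): it takes $X_0$ the polynomials, $W_0 = \Delta(X_0^p)$, and constructs explicit maps $R_k$ sending $(z^m,\dots,z^m)$ to a weighted sum of antiderivative-type monomials $\frac{1}{\tau(l)\la_l^{n_k}}\,\frac{z^{m+r_l n_k}}{(m+1)\cdots(m+r_l n_k)}$, then checks conditions (i)--(iii) by hand. You instead collapse the family by differentiation order, invoke the B\`es--Peris d-mixing result \cite{besperis2007} for the strictly increasing orders $s_1 < \cdots < s_q$ as a black box, and transfer s-mixing to the full family along a Dirichlet sequence using the identity $T_\nu^{p_k} = \xi_\nu^{p_k} S_{l(\nu)}^{p_k}$ with unimodular $\xi_\nu$ --- which is exactly the scheme the paper uses for the \emph{preceding} proposition on weighted translations, transplanted to powers of $D$. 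Both arguments are valid and yield the same conclusion; yours is shorter and reuses existing machinery, while the paper's is self-contained and exhibits the right-inverse maps explicitly. One small point you should address: if all the $r_j$ coincide, then $q=1$ and \cite[Prop.~3.3]{besperis2007} (stated for $p\ge 2$) does not literally apply; you then need the standard fact that a single operator $\mu_1 D^{s_1}$ with $\mu_1\ne 0$ is mixing on $H(\C)$ (it is a nonscalar convolution operator), after which your transfer argument goes through unchanged. This is a trivial edge case, not a genuine gap.
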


\begin{proof}
As the set $\{\la_j/\la_l: \, j,l \in \{1, \dots ,p\}$ with $r_j = r_l \} \subset \T$ is finite, it is a Dirichlet set.
Then there is a strictly increasing sequence $(n_k) \subset \N$ such that $(\la_j/\la_l)^{n_k} \to 1$ as $k \to \infty$,
for all $j,l \in \{1, \dots ,p\}$ with $r_j = r_l$. Put $X_0:= \{$polynomials$\} = {\rm span} \{z^m: \, m\in\N_0\}$ and
$W_0 := \Delta (X_0^p)$. Then $X_0$ is dense in $X := H(\C )$ and $\ovl{W_0} = \ovl{\Delta (X_0^p)} \supset \Delta (X^p)$.
Let $T_j := \la_j D^{r_j}$ $(1 \le j \le p)$. For each $k \in \N$, define the map $R_k : W_0 \to X$ via %\hfil\break \phantom{aaaaaaaaaaa}
$$
R_k (z^m, \dots ,z^m) := \sum_{l=1}^p {1 \over \tau(l)} \cdot {1 \over \la_l^{n_k}}
\cdot {z^{m + r_l n_k} \over (m+1)(m+2) \cdots (m + r_l n_k)},
$$ %\hfil\break
where $\tau (l) := {\rm card} \, \{i \in \{1, \dots, p\}: \, r_i = r_l\}$ $(1 \le l \le p)$. Then $R_k$ is extended to the whole $W_0$ by linearity.
We have:
\begin{enumerate}
\item[(i)] $T_j^{n_k} z^m = 0$ as soon as $n_k r_j > m$, so $T_j^{n_k} z^m \to 0$ as $k \to \infty$ for all $j \in \{1, \dots ,p\}$ and all $m \ge 0$.
Therefore, by linearity, $T_j^{n_k} \to 0$ $(k \to \infty )$ on $X_0$ for all  $j \in \{1, \dots ,p\}$.
\item[(ii)] Fix $m \in \N_0$ and a compact set $K \subset \C$. There is $M \in (0,+\infty )$ with $K \subset \ovl{B}(0,M)$. Given $k \in \N$, we obtain
\begin{equation*}
\begin{split}
\sup_{z \in K} |R_k (z^m, \dots ,z^m)| &\le \sum_{l=1}^p {1 \over \tau (l)} \cdot {1 \over \la_l^{n_k}} \cdot
{M^{m + r_l n_k} \over (m+1)(m+2) \cdots (m + r_l n_k)} \\
&\le \sum_{l=1}^p {1 \over \tau (l)} {M^{m + r_l n_k} / \la_l^{n_k} \over (m+1)(m+2) \cdots (m + n_k)} \\
&\le \sum_{l=1}^p {M^m \over \tau (l)} {(M^{r_l} / \la_l)^{n_k} \over n_k!} \to 0 \ (k \to \infty )
\end{split}
\end{equation*}
Hence, by linearity, $R_k \to 0$ $(k \to \infty )$ pointwise on $W_0$.
\item[(iii)] Fix $m \in \N_0$, $j \in \{1, \dots ,p\}$ and $k \in \N$ with $n_k > m$. Let us compute the
action of $T_j^{n_k}R_k$ on each $(z^m, \dots ,z^m)$. This yields three sums, the first of them corresponding to those $l \in \{1, \dots ,p\}$ with $r_l < r_j$, that equals $0$. Therefore
\begin{equation*}
\begin{split}
T_j^{n_k}&R_k (z^m, \dots ,z^m) = 0 + \sum_{l=1 \atop r_l = r_j}^p {1 \over \tau (l)} \cdot \big({\la_j \over \la_l} \big)^{n_k} \cdot {z^m} \\
                               &+ \sum_{l=1 \atop r_l > r_j}^p {1 \over \tau (l)} \cdot \big( {\la_j \over \la_l} \big)^{n_k} \cdot {z^{m + (r_l - r_j) n_k} \over (m + 1) (m + 2) \cdots (m + (r_l  - r_j)n_k)} \\
                               &\longrightarrow {1 \over \tau (j)} \cdot z^m \cdot \sum_{l=1 \atop r_l = r_j}^p 1 + 0 = z^m \ \ (k \to \infty )
\end{split}
\end{equation*}
uniformly on compacta in $\C$, because $\tau(j)=\tau(l)$ and $({\la_j \over \la_l})^{n_k} \to 1$ for all $(j,l)$ with $r_j = r_l$. By linearity again, we get $T_j^{n_k} R_k (w, \dots , w) \to w$ for all $j=1, \dots ,p$ and all $(w, \dots ,w) \in W_0$.
\end{enumerate}
The conclusion now follows from Theorem \ref{Prop: s-hc Delta-criterion} (or from Remark \ref{Remarks Delta + disjoint}.1).
\end{proof}

For instance, the operators $5D, D^2, -D^2, e^i D^2, {1 \over 10} D^3, -3D^4$ are s-hypercyclic, but clearly not d-hypercyclic.

\vskip .10cm

An extension unifying both Birkhoff's and MacLane's theorems takes place
by considering convolution operators on $H(\C )$, that is, operators commuting with all translations $\tau_a$.
Let $\Phi (z) = \sum_{n=0}^\infty a_n z^n \in H(\C )$. Then
$\Phi$ is said to be of \textit{exponential type} provided that there are positive
constants $A, \, B$ such that $|\Phi (z)| \leq A \exp (B|z|)$ for
all $z \in \C$. Then its associated differential operator $\Phi (D) =
\sum_{n=0}^\infty a_n D^n$ given by $\Phi (D) f =
\sum_{n=0}^\infty a_n f^{(n)}$ $(f \in H(\C ))$ defines an ope\-ra\-tor on $H(\C )$. Moreover, an operator
$T \in L(H(\C ))$ is of convolution if and only if $T = \Phi (D)$ for some entire function $\Phi$ of exponential type.
Note that $D$ and $\tau_a$ are special cases (take $\Phi (z) \equiv z$ and $\Phi (z) \equiv e^{az}$, resp.).
Godefroy and Shapiro \cite{godefroyshapiro1991} proved in 1991 that any nonscalar convolution operator is hypercyclic.
If $G$ is any domain in $\C$, then $\Phi (D)$ is also an operator on $H(G)$ whenever $\Phi$ is of \textit{subexponential type}, that is,
for given $\ve > 0$ there is a constant $A > 0$ such that
$|\Phi (z) | \leq A \exp (\ve |z|)$ for all $z \in \C$. We have that also $\Phi (D)$ is hypercyclic on $H(G)$ provided that
$G$ is simply connected (i.e.~its complement with respect to the one-point compactification $\C_\infty$ of $\C$ is connected) and $\Phi$ is not constant.
For s-hypercyclicity, we present the following assertion, with which we put an end to this introductory paper on s-universality.

\begin{proposition} \label{Prop: s-h Phi(D)}
Assume that $G \subset \C$ is a simply connected domain and that $\Phi_1, \dots ,\Phi_p$ are entire functions of subexponential type
{\rm (}or just of exponential type if $G = \C${\rm )}. Assume also that the set %\hfil\break \phantom{aaaaaaaaaaaaaaaaaa}
$$U_0 := \big\{ \la \in \C : \, \max_{1 \le j \le p} |\Phi_j(\la )| < 1 \big\}$$  %\hfil\break
is nonempty and that each set %\hfil\break \phantom{aaa}
$$U_i := \big\{\la \in \C : \,  |\Phi_i(\la )| > 1\, \textnormal{ and } \max_{1 \le j \le p} |\Phi_j(\la )| \le |\Phi_i(\la )|\big\} \ \ \ (1 \le i \le p)$$ %\hfil\break
has nonempty interior $U_i^0$. Suppose, in addition, that whenever $i,j \in \{1, \dots ,p\}$ satisfy \,$|\Phi_i (\la )| = |\Phi_j (\la )|$
\,for some $\la \in U_i^0$, there exists $\zeta \in \T$ with $\Phi_j = \zeta \cdot \Phi_i$.

\vskip 3pt

Then there is a sequence \,$(n_k) \subset \N$ \,such that
the sequences \,$(\Phi_1 (D))^{n_k}, \dots$ $\dots,(\Phi_p (D))^{n_k}$ are s-mixing. In particular, the operators
\,$\Phi_1 (D), \dots , \Phi_p (D)$ \,are densely s-hy\-per\-cy\-clic on $H(\C )$.
\end{proposition}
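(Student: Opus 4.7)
The plan is to apply the s-hypercyclicity criterion (Theorem \ref{Prop: s-hc Delta-criterion}) to $T_j := \Phi_j(D)$, exploiting that every exponential $e_\la(z) := e^{\la z}$ is a common eigenfunction: $\Phi_j(D) e_\la = \Phi_j(\la) e_\la$. Introduce the equivalence relation on $\{1, \dots, p\}$ given by $i \sim j \iff \Phi_j = \zeta \Phi_i$ for some $\zeta \in \T$, write $[i]$ for the class of $i$, and put $\tau(i) := |[i]|$. The finite set $D := \{\zeta \in \T : \Phi_j = \zeta \Phi_i \text{ for some pair } i \sim j\}$ is Dirichlet, so fix a strictly increasing $(n_k) \subset \N$ with $\zeta^{n_k} \to 1$ for every $\zeta \in D$.

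Set $X_0 := {\rm span}\,\{e_\la : \la \in U_0\}$ and, for each $i = 1, \dots, p$, $X_i := {\rm span}\,\{e_\la : \la \in U_i^0\}$. Since $U_0$ and each $U_i^0$ are nonempty open subsets of $\C$, they have accumulation points, so the classical identity-theorem argument for Fourier--Borel transforms of analytic functionals (the Godefroy--Shapiro framework, in which simple-connectedness of $G$ and the (sub)exponential type of the $\Phi_j$ guarantee that $\Phi_j(D) \in L(H(G))$ and that each of the above spans is dense) yields density of $X_0, X_1, \dots, X_p$ in $H(G)$. Take $W_0 := X_1 \times \cdots \times X_p \subset H(G)^p$, so that $\ovl{W_0} \supset \Delta(H(G)^p)$. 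Condition (i) of the criterion holds immediately, since $|\Phi_j(\la)| < 1$ on $U_0$ forces $\Phi_j(D)^{n_k} e_\la = \Phi_j(\la)^{n_k} e_\la \to 0$ uniformly on compacta.

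Define $R_k : W_0 \to H(G)$ by linearity in each coordinate via
$$
R_k(u_1, \dots, u_p) := \sum_{i=1}^{p} \frac{1}{\tau(i)} \sum_l \frac{c_l^{(i)}}{\Phi_i(\la_l^{(i)})^{n_k}}\, e_{\la_l^{(i)}}
$$
whenever $u_i = \sum_l c_l^{(i)} e_{\la_l^{(i)}}$ with distinct $\la_l^{(i)} \in U_i^0$; since $|\Phi_i| > 1$ on $U_i^0$, the denominators are nonzero and the whole expression tends to $0$ uniformly on compacta, giving (ii). For (iii), a direct computation gives
$$
\Phi_j(D)^{n_k} R_k(u_1, \dots, u_p) = \sum_{i,l} \frac{c_l^{(i)}}{\tau(i)} \left( \frac{\Phi_j(\la_l^{(i)})}{\Phi_i(\la_l^{(i)})} \right)^{n_k} e_{\la_l^{(i)}}.
$$
For $\la \in U_i^0$ one has $|\Phi_j(\la)| \le |\Phi_i(\la)|$, and the main hypothesis of the proposition forces this to be strict when $j \notin [i]$ (equality at any single point of $U_i^0$ would place $j$ in $[i]$). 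Hence the $i$-th inner sum tends to $0$ when $j \notin [i]$; when $j \in [i]$, the ratio $\Phi_j/\Phi_i$ is a constant $\zeta \in D$, so by Dirichlet the $i$-th inner sum tends to $u_i/\tau(i) = u_i/\tau(j)$. Summing, $\Phi_j(D)^{n_k} R_k(w) \to \tau(j)^{-1} \sum_{i \in [j]} u_i$, which has nonnegative coefficients summing to $1$ and so lies in ${\rm conv}\,\{u_1, \dots, u_p\}$. Theorem \ref{Prop: s-hc Delta-criterion} then delivers s-mixing of $(\Phi_1(D)^{n_k}), \dots, (\Phi_p(D)^{n_k})$, whence dense s-hypercyclicity.

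The key design choice, and the main conceptual obstacle, is the $1/\tau(i)$ weighting: a single-scale partial right inverse $R_k(e_\la) := \Phi_i(\la)^{-n_k} e_\la$ (for $\la \in U_i^0$) would send $\Phi_j(D)^{n_k} R_k$ to $0$ on classes $[i] \ne [j]$ and thereby violate (iii); averaging equally over the members of each class repackages those zeros into the correct convex combination $y_j$. The remaining verifications (e.g.\ promoting pointwise convergence of scalars to locally uniform convergence in $G$) are routine because each sum is finite and each $e_\la$ is bounded on compacta.
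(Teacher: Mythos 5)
Your proof is correct and follows essentially the same route as the paper's: the same dense spans of exponentials $e_\la$ over $U_0$ and the interiors $U_i^0$, the same equivalence classes $[i]$ (the paper writes $E_i$) with the Dirichlet-set choice of $(n_k)$, the identical $1/\tau(i)$-weighted right inverses $R_k$, and the same split of $\Phi_j(D)^{n_k}R_k w$ into the part over $i\in[j]$ (tending to $\tau(j)^{-1}\sum_{i\in[j]}u_i$) and the part over $i\notin[j]$ (tending to $0$ because equality of moduli at a point of $U_i^0$ would force $j\in[i]$). No substantive differences to report.
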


\begin{proof}
We write $e_\la := \exp ( \cdot \la )|_G$ for $\la \in \C$. It is easy to see that the functions $e_\la$ are linearly independent. Denote $V_i := U_i^0$ $(1 \le i \le p)$. As $U_0, V_1, \dots , V_p$ are open and nonempty, we obtain that \,$X_0 := {\rm span} \{e_\la : \, \la \in U_0\}$ \,is dense in \,$X := H(G)$ (because $G$ is simply connected: use Runge's approximation theorem together with the fact that ${\rm span}\{\exp ( \cdot \la ) : \, \la \in U_0\}$ is dense in $H(\C)$; see e.g.~\cite[Sect.~5]{godefroyshapiro1991}).
Hence \,$W_0 := \prod_{i=1}^p {\rm span} \{e_\la:\la \in V_i\}$ \,is dense in \,$X^p$.

\vskip 6pt

As $A := \{\zeta \in \T :$
exist $l,j \in \{1, \dots ,p\}$ with $\Phi_j = \zeta \Phi_l \} \subset \T$ is finite, it is a Dirichlet set; hence
there is a strictly increasing sequence $(n_k) \subset \N$ such that
$\zeta^{n_k} \to 1$ for all $\zeta \in A$.

\vskip 6pt

For each $i \in \{1, \dots ,p\}$, we put $T_i := \Phi_i(D)|_{H(G)}$, $E_i := \{j \in \{1, \dots ,p\}:$ exists $\zeta \in \T$ with $\Phi_j = \zeta \Phi_i\}$ and
$\tau (i) := {\rm card} (E_i)$. Notice that if $i \in E_j$, then $E_i = E_j$ (just use that $\T$ is a multiplicative group), hence $\tau (i) = \tau (j)$.
Given $i \in \{1, \dots ,p\}$ and $v_i \in{\rm span} \{e_\la:\la \in V_i\}$, there are uniquely determined scalars $c_{i,1}, \dots ,c_{i,J(i)} \in \C$ and
pairwise distinct $\la_{i,1}, \dots , \la_{i,J(i)} \in V_i$ such that $v_i = \sum_{l=1}^{J(i)} c_{i,l} e_{\la_{i,l}}$. For $k \in \N$ we define \,$R_k:W_0 \to X$ \,as
$$
R_k w := \sum_{i=1}^p {1 \over \tau (i)} \cdot \sum_{l=1}^{J(i)} {c_{i,l} \over \Phi_i (\la_{i,l})^{n_k}} \cdot e_{\la_{i,l}}, \eqno (1)
$$
where \,$w = (v_1, \dots ,v_p) \in W_0$ \,and the \,$v_i$'s are as above. We have:
\begin{enumerate}
\item[(i)] If $\la \in U_0$ and $j \in \{1, \dots ,p\}$, then $T_j^{n_k} e_\la = \Phi_j (\la )^{n_k} e_\la \to 0$ as
$k \to \infty$, because $|\Phi_j(\la )| < 1$. By linearity, we get $T_j^{n_k} \to 0$ on $X_0$.
\item[(ii)] Let \,$w = (v_1, \dots ,v_p) \in W_0$, so that \,$v_i = \sum_{l=1}^{J(i)} c_{i,l} e_{\la_{i,l}}$, as above.
Since \,$|\Phi_i (\la_{i,l})| > 1$, we get \,$|\Phi_i (\la_{i,l})^{n_k}| \to +\infty$ \,as \,$k \to \infty$, for each \,$i \in \{1, \dots ,p\}$ \,and each
$l=1, \dots ,J(i)$. From (1) one derives that \,$R_k w \to 0$.
\item[(iii)] Again, let \,$w = (v_1, \dots ,v_p) \in W_0$, with \,$v_i = \sum_{l=1}^{J(i)} c_{i,l} e_{\la_{i,l}}$.
Fix \,$j \in \{1, \dots ,p\}$ \,and \,$k \in \N$. We compute
$$
T_j^{n_k} R_k w = \sum_{i=1}^p {1 \over \tau (i)} \cdot \sum_{l=1}^{J(i)} {c_{i,l} \over \Phi_i (\la_{i,l})^{n_k}} \cdot T_j^{n_k} e_{\la_{i,l}}
$$
$$
\phantom{aaaaaaaaaaaaaaaaaaa} = \sum_{i=1}^p {1 \over \tau (i)} \cdot \sum_{l=1}^{J(i)} c_{i,l} \cdot  \left( {\Phi_j (\la_{i,l}) \over \Phi_i (\la_{i,l})} \right)^{n_k} e_{\la_{i,l}} = A_k + B_k,
$$
where $A_k$ ($B_k$, resp.) denotes the part of the preceding sum corresponding to those $i \in E_j$ ($i \not\in E_j$, resp.).
If $i \in E_j$, there is $\zeta = \zeta_{i,j} \in A$ such that $\Phi_j = \zeta \cdot \Phi_i$, so that
$\left( {\Phi_j (\la_{i,l}) \over \Phi_i (\la_{i,l})} \right)^{n_k} = \zeta^{n_k} \to 1$ as $k \to \infty$. Note that $\tau (i) = \tau (j)$ if $i \in E_j$.
Therefore, on the one hand, $$\qquad A_k \to \sum_{i=1 \atop i \in E_j}^p {1 \over \tau (i)} \cdot \sum_{l=1}^{J(i)} c_{i,l} \cdot e_{\la_{i,l}} =
{1 \over \tau (j)} \cdot \sum_{i=1 \atop i \in E_j}^p \sum_{l=1}^{J(i)} c_{i,l} \cdot e_{\la_{i,l}} = {1 \over \tau (j)} \cdot \sum_{i=1 \atop i \in E_j}^p v_i.$$
On the other hand, if $i \not\in E_j$, we have that $|\Phi_j(\la_{i,l})/\Phi_i(\la_{i,l})| < 1$ for all $l=1, \dots ,J(i)$
(indeed, as $\la_{i,l} \in V_i$, we have $|\Phi_j(\la_{i,l})| \le |\Phi_i(\la_{i,l})|$; if we assume $|\Phi_j(\la_{i,l})| = |\Phi_i(\la_{i,l})|$, then there would exist
$\zeta \in \T$ with $\Phi_j = \zeta \cdot \Phi_i$, which would yield $i \in E_j$, a contradiction). Hence $\big({\Phi_j(\la_{i,l}) \over \Phi_i(\la_{i,l})} \big)^{n_k} \to 0$, so
$B_k \to 0$. This entails $$T_j^{n_k} R_k w = A_k + B_k \to {1 \over \tau (j)} \cdot \sum_{i=1 \atop i \in E_j}^p v_i \,\,\, (k \to \infty ),$$ and the last vector belongs
to \,${\rm conv} (\{v_1, \dots ,v_p\})$ \,since in the last sum there are exactly \,$\tau (j)$ \,summands.
%To summarize, for every $w = (v_1, \dots ,v_p) \in W_0$ and every $j \in \{1, \dots ,p\}$, there is $y_j \in {\rm conv} (\{v_1, \dots ,v_p\})$
%such that $T_j^{n_k} R_k w \to y_j$.
\end{enumerate}
The conclusion follows, once again, from the s-hypercyclicity criterion (Theorem \ref{Prop: s-hc Delta-criterion}).
\end{proof}

\begin{remark}
Proposition 3.4 in \cite{besperis2007} (see also \cite[Theorem 5.3]{bernal2007}) asserts that if $U_0$ and $W_i := \{\la \in \C : \,  |\Phi_i(\la )| > 1$
and $\max_{j \ne i} |\Phi_j(\la )| < |\Phi_i(\la )|\}$ $(1 \le i \le p)$ are nonempty, then $\Phi_1(D), \dots , \Phi_p(D)$
are d-mixing. If these assumptions are satisfied, then the assumptions of Proposition \ref{Prop: s-h Phi(D)} are also
satisfied. Note that Proposition \ref{Prop: s-h Phi(D)} includes the case $\Phi_1 = \Phi$, $\Phi_j = c_j \Phi$ with $|c_j| = 1$ $(j=2, \dots ,p)$.
%In view of this, the assumptions of Proposition \ref{Prop: s-h Phi(D)} are ``optimal'', meaning that if there is
%$i \in \{1, \dots ,p\}$ such that the set $W_i$ is empty then, for every $\la \in \C$, either $|\Phi_i (\la )| \le 1$ or
%there exists $j \ne i$ with $|\Phi_j (\la )| \ge |\Phi_i(\la )|$.
%If the assumptions of Proposition \ref{Prop: s-h Phi(D)} are satisfied, there exists $z_i \in U_i^0$, so $|\Phi_i(z_i)| > 1$. Then there would exist
%$j \ne i$ such that $|\Phi_j (z_i )| \ge |\Phi_i(z_i )|$. But also $|\Phi_j (z_i )| \le |\Phi_i(z_i )|$. Hence $|\Phi_j (z_i )| = |\Phi_i(z_i )|$,
%so there is $\zeta \in \T$ with $\Phi_j = \zeta \Phi$.
\end{remark}

\vspace{6pt}

This paper does not intend to be exhaustive. Of course, many more sets of operators or of sequences of operators may be analyzed under the point of view of s-universality/s-hypercyclicity.
For instance, consider a compact set $K \subset \C$ and the Banach space $(A(K), \| \cdot \|_\infty)$ of con\-ti\-nuous functions $K \to \C$ that are holomorphic on $K^0$. Let $$T_{K,n} : f \in H(\D) \mapsto (S_nf)|_K \in A(K)\,\,\,(n \in \N ),$$ where $S_nf$ denotes the $n$th partial sum of the Taylor series of $f$ around the origin. Assume that $K \subset \C \setminus \D$ and that $K$ has connected complement. Then Costakis and Tsirivas \cite[Sect.~3]{costakistsirivas2014} have recently shown that, given any two strictly increasing sequences $(n_k),(m_k)$ in $\N$, the sequences $(T_{K,n_k})$ and $(T_{K,m_k})$ are --by using our terminology-- s-universal.
Even more, they have shown that $$\bigcap\,\big\{s\mbox{-}\mathcal U ((T_{K,n_k}),(T_{K,m_k})) : \, K \subset \C \setminus \D \mbox{ compact}, \, \C \setminus K \mbox{ connected}\big\}$$
\,is a residual subset of $H(\D )$.

\vspace{16pt}

\noindent {\bf Acknowledgements.} The first author has been partially supported by Plan
Andaluz de Investigaci\'on de la Junta de Andaluc\'{\i}a FQM-127
Grant P08-FQM-03543 and by MEC Grant MTM2015-65242-C2-1-P. The second author has been supported by DFG-Forschungsstipendium JU 3067/1-1.

%%%%%%%%%%
%REFERENCES %
%%%%%%%%%%

\begin{bibdiv}
\begin{biblist}

\bib{bamernikadetskilicman2016}{article}{
	author={Bamerni, N.},
    author={Kadets, V.},
    author={K{\i}l{\i}\c{c}man, A.},
	title={Hypercyclic operators are subspace hypercyclic},
	journal={J.~Math.~Anal.~Appl.},
	volume={435},
	date={2016},
	pages={1812--1815},
}

\bib{bayartmatheron2009}{book}{
	author={Bayart, F.},
	author={Matheron, E.},
	title={Dynamics of Linear Operators},
	note={Cambridge Tracts in Ma\-the\-ma\-tics},
	publisher={Cambridge University Press},
	date={2009},
}

\bib{bernal1999b}{article}{
  author={Bernal-Gonz{\'a}lez, L.},
  title={Densely hereditarily hypercyclic sequences and large hypercyclic manifolds},
  journal={Proc. Amer. Math. Soc.},
  volume={127},
  date={1999},
  %number={11},
  pages={3279--3285},
}
	
\bib{bernal2007}{article}{
	author={Bernal-Gonz\'alez, L.},
	title={Disjoint hypercyclic operators},
	journal={Studia Math.},
	volume={182},
	date={2007},
	%number={2},
	pages={113--131},
}

\bib{bernalgrosse2003}{article}{
  author={Bernal-Gonz{\'a}lez,L.},
  author={Grosse-Erdmann, K.-G.},
  title={The Hypercyclicity Criterion for sequences of ope\-ra\-tors},
  journal={Studia Math.},
  volume={157},
  date={2003},
  number={1},
  pages={17--32},
}

\bib{besmartin2012}{article}{
	author={B\`es, J.},
    author={Martin, \"O.},
	title={Compositional disjoint hypercyclicity equals disjoint supercyclicity},
	journal={Houston J.~Math.},
	volume={38},
	date={2012},
	pages={1149--1163},
}		

\bib{besmartinperis2011}{article}{
	author={B\`es, J.},
    author={Martin, \"O.},
    author={Peris, A.},
	title={Disjoint hypercyclic linear fractional composition operators},
	journal={J.~Math.~Anal.~Appl.},
	volume={381},
	date={2011},
	pages={843--856},
}	

\bib{besmartinperisshkarin2012}{article}{
	author={B\`es, J.},
    author={Martin, \"O.},
    author={Peris, A.},
    author={Shkarin, S.},
	title={Disjoint mixing operators},
	journal={J.~Funct.~Anal.},
	volume={263},
	date={2012},
	pages={1283--1322},
}

\bib{besmartinsanders2014}{article}{
	author={B\`es, J.},
    author={Martin, \"O.},
    author={Sanders, R.},
 	title={Weighted shifts and disjoint hypercyclicity},
	journal={J.~Operator Theory},
	volume={72},
	date={2014},
	pages={15--40},
}

\bib{besperis1999}{article}{
  author={B\`es, J.},
  author={Peris, A.},
  title={Hereditarily hypercyclic operators},
  journal={J. Funct. Anal.},
  volume={167},
  date={1999},
  number={1},
  pages={94--112},
}

\bib{besperis2007}{article}{
	author={B\`es, J.},
    author={Peris, A.},
	title={Disjointness in Hypercyclicity},
	journal={J.~Math.~Anal.~Appl.},
	volume={336},
	date={2007},
	pages={297--315},
}

\bib{birkhoff1929}{article}{
  author={Birkhoff, G.D.},
  title={D\'emonstration d'un th\'eor\`eme \'el\'ementaire sur les fonctions enti\`eres},
  journal={C. R. Acad. Sci. Paris},
  volume={189},
  date={1929},
  pages={473--475},
}		

\bib{bukovsky2011}{book}{
  author={Bukovsk\'y, L.}, %Lev
  title={The structure of the real line},
  publisher={Springer Basel},
  place={Warsaw},
  date={2011},
}

\bib{costakistsirivas2014}{article}{
	author={Costakis, G.},
    author={Tsirivas, N.},
	title={Doubly universal Taylor series},
	journal={J.~Approx.~Theory},
	volume={180},
	date={2014},
	pages={21--31},
}

\bib{delarosaread2009}{article}{
  author={De La Rosa, M.},
  author={Read, C.},
  title={A hypercyclic operator whose direct sum $T \oplus T$ is not hypercyclic},
  journal={J.~Operator Theory},
  volume={61}
  date={2009},
 % number={2},
  pages={369--380},
}

\bib{gaier1987}{book}{
  author={Gaier, D.},
  title={Lectures on complex approximation},
  publisher={Birkh\"auser},
  place={Basel-London-Stuttgart},
  date={1987},
}

\bib{godefroyshapiro1991}{article}{
  author={Godefroy, G.},
  author={Shapiro, J.H.},
  title={Operators with dense, invariant, cyclic vectors manifolds},
  journal={J. Funct. Anal.},
  volume={98},
  date={1991},
  number={2},
  pages={229--269},
}	

\bib{grosse1987}{book}{
  author={Grosse-Erdmann, K.-G.},
  title={Holomorphe Monster und universelle Funktionen},
  publisher={Mit. Math. Sem. Giessen},
  volume={176},
  date={1987},  %pages={iv+84},
}

\bib{grosse1999}{article}{
  author={Grosse-Erdmann, K.-G.},
  title={Universal families and hypercyclic operators},
  journal={Bull. Amer. Math. Soc.},
  volume={36},
  date={1999},
  number={3},
  pages={345--381},
}	

\bib{grosse2003}{article}{
  author={Grosse-Erdmann, K.-G.},
  title={Recent developments in hypercyclicity},
  journal={Rev. R. Acad. Cienc. Ser. A Mat.},
  volume={97},
  date={2003},
  number={3},
  pages={273--286},
}	

\bib{grosseperis2011}{book}{
  author={Grosse-Erdmann, K.-G.},
  author={Peris, A.},
  title={Linear Chaos},
  publisher={Springer},
  place={London},
  date={2011},  %pages={xii+386},
}

\bib{le2011}{article}{
	author={Le, C.M.},
	title={On subspace-hypercyclic operators},
	journal={Proc.~Amer.~Math.~Soc.},
	volume={139},
	date={2011},
	pages={2847--2852},
}

\bib{leonmuller2004}{article}{
  author={Le\'on-Saavedra, F.},
  author={M\"uller, V.},
  title={Rotations of hypercyclic and supercyclic operators},
  journal={Integr.~equ.~oper.~theory},
  volume={50},
  date={2004},
  pages={385--391},
}	

\bib{madoremartinez2011}{article}{
	author={Madore, B.F.},
    author={Mart\'{\i}nez-Avenda\~no, R.A.},
	title={Subspace hypercyclicity},
	journal={J.~Math.~Anal.~Appl.},
	volume={373},
	date={2011},
	pages={502--511},
}

\bib{maclane1952}{article}{
  author={MacLane, G.R.},
  title={Sequences of derivatives and normal families},
  journal={J. Anal. Math.},
  volume={2},
  date={1952},
  number={1},
  pages={72--87},
}

\bib{rolewicz1969}{article}{
  author={Rolewicz, S.},
  title={On orbits of elements},
  journal={Studia Math.},
  volume={32},
  date={1969},
  pages={17--22},
}

\bib{shapiro1993}{book}{
  author={Shapiro, J.H.},
  title={Composition Operators and Classical Function Theory},
  series={Universitext},
  publisher={Springer-Verlag},
  place={New York},
  date={1993},
}

\bib{shkarin2008}{article}{
  author={Shkarin, S.},
  title={Universal elements for non-linear operators and their applications},
  journal={J. Math. Anal. Appl.},
  volume={348},
  date={2008},
  pages={193--210},
}

\bib{wengenroth2003}{article}{
  author={Wengenroth, J.},
  title={Hypercyclic operators on nonlocally convex spaces},
  journal={Proc. Amer. Math. Soc.},
  volume={131},
  date={2003},
  number={6},
  pages={1759--1761},
}

\bib{zhangzhou2016}{article}{
  author={Zhang, L.},
  author={Zhou, Z.H.},
  title={Dynamics of composition operators on weighted Bergman spaces},
  journal={Indag. Math. (N.S.)},
  volume={27},
  date={2016},
  number={1},
  pages={406--418},
}

\end{biblist}
\end{bibdiv}

\end{document}